\newcommand{\A}{{\mathcal A}}
\newcommand{\I}{{\mathcal I}}
\renewcommand{\L}{{\mathcal L}}
\newcommand{\M}{{\mathcal M}}
\newcommand{\C}{\ensuremath{\mathbb{C}}}
\newcommand{\p}{\partial}
\newcommand{\Z}{\ensuremath{\mathbb{Z}}}
\newtheorem{lemma}{Lemma}
\newtheorem{proposition}{Proposition}
\newtheorem{theorem}{Theorem}
\newtheorem{corollary}{Corollary}
\begin{document}

\title[Quantization on a super-K\"ahler manifold]
{Deformation quantization with separation of variables on a super-K\"ahler manifold}
\author[Alexander Karabegov]{Alexander Karabegov}
\address[Alexander Karabegov]{Department of Mathematics, Abilene
Christian University, ACU Box 28012, Abilene, TX 79699-8012}
\email{axk02d@acu.edu}

\begin{abstract}
We construct deformation quantizations with separation of variables on a split super-K\"ahler manifold and describe their canonical supertrace densities.
\end{abstract}
\subjclass[2010]{53D55, 58A50}
\keywords{deformation quantization, split supermanifold, supertrace density}
\date{March 14, 2016}
\maketitle

\section{Introduction}

Deformation quantization of Poisson manifolds via star products was introduced in \cite{BFFLS}. The existence of star products on arbitrary Poisson manifolds and their classification up to equivalence was given by Kontsevich in \cite{K}. Fedosov gave  in \cite{F1} a geometric construction of star products in each equivalence class on an arbitrary symplectic manifold. Deformation quantizations with separation of variables on pseudo-K\"ahler manifolds related to Berezin's quantization were introduced and classified in \cite{CMP1}, \cite{BW}, and \cite{N}. There are many papers on geometric, symbol, and deformation quantization on supermanifolds (see \cite{B}, \cite{KB}, \cite{V}, \cite{EN}, \cite{BKLR}). In this paper we describe a class of star products with separation of variables on a split complex supermanifold $\Pi E$, where $E$ is a holomorphic vector bundle on a pseudo-K\"ahler manifold $M$. Below we give a number of definitions which will be used throughout this paper.

Let $\A$ be a supercommutative unital $\Z_2$-graded associative algebra with the identity 1. Given an element $f \in \A$, we denote the operator of left multiplication by $f$ by the same symbol. An operator $A$ on $\A$ is a (left) differential operator if there exists a nonnegative integer $n$ such that for any elements $f_0, \ldots f_n \in A$, one has
\[
     [f_n,  [f_{n-1}, \ldots  [f_0, A] \ldots]] = 0,
\]
where $\Z_2$-graded commutators are used. The smallest $n$ with this property is called the order of $A$. This algebraic definition gives the ordinary differential operators on the algebra of smooth functions on a manifold.

Denote by $\A[\nu^{-1}, \nu]]$ the space of formal Laurent series with a finite principal part with coefficients from $\A$,
\[
               f = \nu^r f_r + \nu^{r+1} f_{r+1} + \ldots,
\]
where $r \in \Z$ and $f_k \in \A$ for $k \geq r$. A formal differential operator on $\A[\nu^{-1}, \nu]]$ is a formal series
\begin{equation}\label{E:fdo}
                  A = \nu^r A_r + \nu^{r+1} A_{r+1} + \ldots,
\end{equation}
where $r \in \Z$ and $A_k$ is a differential operator on $\A$ for $k \geq r$. A formal differential operator (\ref{E:fdo}) is called natural if $r = 0$ and the order of $A_k$ is not greater than $k$. For a natural $A$, $A_0$ is a left multiplication operator by an element of $\A$.

Let $\ast$ be a $\nu$-linear $\nu$-adically continuous $\Z_2$-graded associative product on $\A[\nu^{-1}, \nu]]$ with the identity 1, given by the formula
\begin{equation}\label{E:star}
              f \ast g = \sum_{r=s}^\infty \nu^r C_r(f,g),
\end{equation}
where $s$ is a fixed nonpositive integer and the mapping $C_r : \A \times \A \to \A$ is extended to $\A[\nu^{-1}, \nu]]$ by $\nu$-linearity. 

Let $L_f$ be the left $\ast$-multiplication operator by $f$ on $\A[\nu^{-1}, \nu]]$, so that $L_f g = f \ast g$. We denote by $R_f$ the graded right $\ast$-multiplication operator by $f$ defined for homogeneous $f, g$ as follows,
\[
                              R_f g = (-1)^{|f||g|} g \ast f.
\]
Then
\[
                L_f g - R_f g = [f,g]_\ast,
\]
where $[\cdot,\cdot]_\ast$ is the supercommutator on the algebra $(\A[\nu^{-1}, \nu]],\ast)$. The operators $L_f$ and $R_g$ supercommute for any $f,g \in \A[\nu^{-1}, \nu]]$. 

A product $\ast$ is called differential if the operators $L_f, R_f$ are (left) formal differential operators for any $f \in \A$. A differential product  (\ref{E:star}) on $\A[\nu^{-1}, \nu]]$ is a star product if it is a formal deformation of the supercommutative product on $\A$, i.e., if $s=0$ and $C_0(f,g) = fg$. For a star product $\ast$, the bidifferential operator
\[
     f,g \mapsto C_1(f,g) - (-1)^{|f||g|} C_1(g,f)
\]
is a Poisson bracket on $\A$. It is a right derivation in $f$ and a left derivation in $g$, it is graded antisymmetric, and satisfies the graded Jacobi identity.

Two star products $\ast$ and $\ast'$ on $\A[\nu^{-1}, \nu]]$ are equivalent if there exists a formal differential operator $T = 1 + \nu T_1 + \nu^2 T_2 + \ldots$ on $\A[\nu^{-1}, \nu]]$ such that
\[
      f \ast' g = T^{-1} (Tf \ast Tg).
\]
 A star product $\ast$ is called natural if the operators $L_f$ and $R_f$ are natural for any $f \in \A[[\nu]]$. Equivalently, $\ast$ is natural  if the operators $L_f$ and $R_f$ are natural for any $f \in \A$.
The concept of a natural star product was introduced in  \cite{GR}.

{\bf Acknowledgments.} I want to express my gratitude to H. Khudaverdian and Th. Voronov for answering my numerous questions on supermathematics.

\section{Deformation quantization on Poisson manifolds}

A star product $\star$ on a manifold $M$ is a formal differential deformation of the commutative product on $C^\infty(M)$.  It induces a Poisson bracket on $M$,
\[
     \{f,g\} := -i (C_1(f,g) - C_1(g,f)).
\]
A deformation quantization of a Poisson manifold $(M, \{\cdot, \cdot\})$ is given by a star product on $M$ which induces the  Poisson bracket $\{\cdot,\cdot\}$. Kontsevich proved in \cite{K} that the equivalence classes of star products on a Poisson manifold $M$ are parametrized by the formal deformations of the Poisson structure. 

A star product on a manifold $M$ can be restricted to any open subset $U \subset M$. Hence, one can consider star products of locally defined functions. 

We say that a formal function $f = \nu^r f_r + \nu^{r+1}f_{r+1} +\ldots$ on $M$ has compact support if each $f_k, k \geq r$, has compact support (but we do not require that the supports of all $f_k$ are contained in a compact set).

We will call a star product on a manifold $M$ nondegenerate if it induces a nondegenerate Poisson structure on $M$. The nondegenerate Poisson tensors (bivector fields) on a manifold $M$ bijectively correspond to the symplectic forms on $M$. The equivalence classes of star products on a symplectic manifold $M$ equipped with a symplectic form $\omega_{-1}$ are bijectively parametrized by the formal de Rham cohomology classes from
\[
      \nu^{-1}[\omega_{-1}] + H^2(M)[[\nu]].
\]
For each star product $\star$ on a symplectic manifold $(M,\omega_{-1})$ of dimension $2m$ there exists a canonically normalized formal trace density
\begin{equation}\label{E:trdens}
                   \mu_\star = \frac{1}{m! \, \nu^m} \left(\omega_{-1}\right)^m e^\varkappa, 
\end{equation}
where $\varkappa = \nu \varkappa_1 + \nu^2 \varkappa_2 + \ldots$ is a formal function globally defined on ~$M$. The canonical normalization of (\ref{E:trdens}) can be described intrinsically as follows (see \cite{LMP2}). Let $U$ be a contractible open subset of $M$. There exists a formal derivation
\[
               \delta = \frac{d}{d\nu} + A
\]
of the star product $\star$ on $U$, where $A = \nu^{-1} A_{-1} + A_0 + \nu A_1 + \ldots$ is a formal differential operator on $U$. The canonical trace density $\mu_\star$ is uniquely determined on $U$ by the condition that
\[
       \frac{d}{d\nu} \int_U f \, \mu_\star = \int_U \delta(f) \, \mu_\star
\]
for any $f \in C^\infty(U)[\nu^{-1},\nu]]$ with compact support. If $M$ is compact, the index theorem for deformation quantization (\cite{F2}, \cite{NT}) expresses the total volume
\[
              \int_M \mu_\star
\]
via a topological formula involving the cohomology class of the star product. 

Let $M$ be a complex manifold. A star product $\star$ on $M$ satisfying the conditions
\begin{equation}\label{E:separ}
               a \star f = af \mbox{ and } f \star b = fb
\end{equation}
for any locally defined holomorphic function $a$ and antiholomorphic function $b$ is called a star product with separation of variables (of the anti-Wick type). Conditions (\ref{E:separ}) mean that $L_a = a$ and $R_b = b$ are point-wise multiplication operators. Equivalently, the bidifferential operators $C_r$ differentiate their first argument in antiholomorphic directions and their second argument in holomorphic ones. The Poisson tensor corresponding to a star product with separation of variables $\star$ is of type (1,1) with respect to the complex structure. In local coordinates the operator $C_1$ and the Poisson bracket for the product $\star$ are of the form
\[
     C_1(f,g) = g^{lk} \frac{\p f}{\p \bar z^l} \frac{\p g}{\p z^k} \mbox{ and } \{f,g\} = i g^{lk} \left(\frac{\p f}{\p z^k}\frac{\p g}{\p \bar z^l} - \frac{\p g}{\p z^k}\frac{\p f}{\p \bar z^l}\right),
\]
where $g^{lk}$ is the corresponding Poisson tensor. Here, as well as in the rest of the paper, we assume summation over repeated lower and upper indices. If the Poisson tensor $g^{lk}$ is nondegenerate, its inverse $g_{kl}$ is a pseudo-K\"ahler metric tensor.

It was proved in \cite{CMP1} that the star products with separation of variables on a pseudo-K\"ahler manifold $M$ with a pseudo-K\"ahler form $\omega_{-1}$ are bijectively parametrized by the closed formal (1,1)-forms
\begin{equation}\label{E:omega}
      \omega = \nu^{-1}\omega_{-1} + \omega_0 + \nu \omega_1 + \ldots
\end{equation}
on $M$. The form $\omega$ parametrizing a star product with separation of variables $\star$ is called its classifying form. The existence of star products with separation of variables (of the Wick type) on arbitrary pseudo-K\"ahler manifolds was shown in \cite{BW} using a generalization of Fedosov's construction. It was proved in \cite{N} that every star product with separation of variables on a pseudo-K\"ahler manifold can be obtained via Fedosov's approach.

Given a formal form (\ref{E:omega}) on a pseudo-K\"ahler manifold $(M, \omega_{-1})$, the star product with separation of variables $\star$ with the classifying form $\omega$ on~ $M$ is completely characterized by the following property. Let $U$ be any contractible coordinate chart on $M$ and $\Phi_r$ be a potential of the form~ $\omega_r$ on $U$ for $r \geq -1$, i.e., $\omega_r = i \p \bar \p \Phi_r$. Then
\begin{equation}\label{E:classleft}
           L_{\frac{\p\Phi}{\p z^k}} = \frac{\p\Phi}{\p z^k} + \frac{\p}{\p z^k},
\end{equation}
where
\[
     \Phi : = \nu^{-1}\Phi_{-1} + \Phi_0 + \nu \Phi_1 + \ldots
\]
is a formal potential of $\omega$ on $U$. The star product $\star$ is also characterized by the property that
\begin{equation}\label{E:classright}
      R_{\frac{\p\Phi}{\p \bar z^l}} = \frac{\p\Phi}{\p \bar z^l} + \frac{\p}{\p \bar z^l}.
\end{equation}

Given a (possibly degenerate) star product with separation of variables $\star$ on a complex manifold $M$, we define a formal differential operator
\[
\I_\star = 1 + \nu \I_1 + \nu^2 \I_2 +\ldots
\]
on $M$ as follows:
\[
                   \I_\star (ba) = b \star a,
\]
where $a$ and $b$ are local holomorphic and antiholomorphic functions, respectively. The operator $\I_\star$ is globally defined on $M$. It is called the formal Berezin transform of the star product $\star$. A star product with separation of variables is completely determined by its formal Berezin transform. It was proved in \cite{CMP3} that the star product
\[
                   f \star' g := \I_\star^{-1} (\I_\star f \star \I_\star g) 
\]
is a star product with separation of variables of the Wick type (i.e., with the r\^oles of holomorphic and antiholomorphic coordinates swapped) on the same Poisson manifold. Its opposite product
\[
     f \tilde \star g : = \I_\star^{-1} (\I_\star g \star \I_\star f) 
\]
is a star product with separation of variables of the anti-Wick type on $M$ equipped with the opposite Poisson structure. We call $\tilde\star$ the star product dual to $\star$. Its formal Berezin transform is $\I_{\tilde\star} = \I_\star^{-1}$. The star product $\star$ is dual to $\tilde\star$.

If $\star$ is a star product with separation of variables on a pseudo-K\"ahler manifold $(M,\omega_{-1})$ with a classifying form $\omega$, one can construct a trace density of the product $\star$ on a contractible coordinate chart $U \subset M$ as follows (see \cite{LMP2} and \cite{JGP}).  Let $\Phi$ be a formal potential of $\omega$ on $U$. There exists a formal potential $\Psi$ on $U$ of the classifying form $\tilde\omega$ of the dual star product $\tilde\star$ satisfying the equations
\[
          \frac{\p\Phi}{\p z^k} + \I_\star \left(\frac{\p\Psi}{\p z^k}\right) = 0 \mbox{ and } \frac{\p\Phi}{\p \bar z^l} + \I_\star \left(\frac{\p\Psi}{\p \bar z^l}\right) = 0,
\]
which determine $\Psi$ up to an additive formal constant. Then
\[
                    e^{\Phi+\Psi} dz d\bar z,
\]
where $dzd\bar z$ is a Lebesgue measure on $U$, is a trace density for the star product $\star$ on $U$. One can canonically normalize it as follows. The form $\omega_{-1}$ can be written on $U$ as
\[
                \omega_{-1} =i \p \bar \p \Phi_{-1} = i g_{kl} dz^k \wedge d \bar z^l,
\]
where the metric tensor 
\[
g_{kl} = \frac{\p^2 \Phi_{-1}}{\p z^k \p \bar z^l}
\] 
is inverse to the Poisson tensor $g^{lk}$. Set $\mathbf{g} = \det \left(g_{kl}\right)$ and choose a branch $\log \mathbf{g}$ of the logarithm of $\mathbf{g}$ on $U$.  There exists a unique potential $\Psi$ of the form $\tilde\omega$ on $U$ such that
\[
   \Psi = - \nu^{-1}\Phi_{-1} + (- \Phi_0 + \log \mathbf{g}) + \nu \Psi_1 + \ldots,
\]
which satisfies the equation
\[
      \frac{d\Phi}{d\nu} + \I_\star \left(\frac{d\Psi}{d\nu}\right) = \frac{m}{\nu}.
\]
The function $\varkappa$ from (\ref{E:trdens}) is given on $U$ by the formula
\[
     \varkappa = \Phi+\Psi - \log\mathbf{g} = \nu(\Phi_1 + \Psi_1) + \ldots.
\]
In this paper we generalize some of the abovementioned constructions including that of a canonical trace density to the case of a star product with separation of variables on $\Pi E$. 

\section{A product on a split supermanifold}\label{S:split}

Let $E$ be a holomorphic vector bundle of rank $d$ on a complex manifold $M$ of complex dimension $m$. In this section we construct a differential product on the formal functions on the split supermanifold $\Pi E$, which is not necessarily a deformation of the supercommutative product on $C^\infty(\Pi E)$. 

Let $U \subset M$ be a coordinate chart with coordinates $\{z^k, \bar z^l\}$ such that $E$ is holomorphically trivial over $U$,
$E|_U \cong U \times \C^d$ be a holomorphic trivialization of $E$, and $\{\theta^\alpha, \bar \theta^\beta\}$ be the odd fiber coordinates on the corresponding trivialization $\Pi E|_U \cong U \times \C^{0|d}$.  Denote by $[d]$ the ordered set of integers $\{1,2, \ldots,d\}$. We consider the ordered subsets $I = \{\alpha_1, \ldots ,\alpha_k\} \subset [d]$, where $1 \leq \alpha_1 < \ldots < \alpha_k \leq d$, as tensor indices, and set $|I| = k$ and
\[
                \theta^I := \theta^{\alpha_1} \ldots \theta^{\alpha_k}.
\]
A function on $\Pi E$ is a section of the bundle $\wedge E^\ast$ of Grassmann algebras. We denote by $C^\infty(\Pi E): = C^\infty(\wedge E^\ast)$ the space of functions on $\Pi E$. The restriction of a function $f$ on $\Pi E$ to $\Pi E|_U$ is identified with a sum
\[
                  f = f_{IJ} \theta^I \bar \theta^J,
\]
where $f_{IJ} \in C^\infty(U)$ and summation over repeated tensor indices is assumed. Suppose there is another holomorphic trivialization of $E|_U$ with the holomorphic transition functions $a^\alpha_\gamma$ on $U$. We set $b^\alpha_\gamma := \overline{a^\alpha_\gamma}$. Let $\eta^\alpha, \bar \eta^\beta$ be the odd fiber coordinates on $\Pi E|_U$ corresponding to the second trivialization. Then
\[
                                 \theta^\alpha = a^\alpha_\gamma \eta^\gamma \mbox{ and } \bar\theta^\beta = b^\beta_\delta \bar \eta^\delta.
\]
The transition functions $a^\alpha_\gamma$ and $b^\beta_\delta$ induce holomorphic matrices $a^I_K$ and antiholomorphic matrices $b^J_L$ such that
\[
                      \theta^I = a^I_K \eta^K \mbox{ and } \bar\theta^J = b^J_L \bar\eta^L.
\]
We have $f = f_{IJ} \theta^I \bar \theta^J = f'_{KL} \eta^K \bar\eta^L,$ where $f'_{KL} = f_{IJ}a^I_K b^J_L$.

We call functions $a = a_I \theta^I \mbox{ and } b = b_J \bar \theta^J$ on $U \times \C^{0|d}$ holomorphic and antiholomorphic if $a_I$ and $b_J$ are holomorphic and antiholomorphic functions on $U$, respectively. 

We will use arrows to indicate that  multiplication operators by Grassmann variables and partial derivatives with respect to these variables act from the left or from the right. By default, we assume that they act from the left and omit the arrows.
Given $I= \{\alpha_1,\ldots ,\alpha_k\}$, we set
\[
   {\frac{\p}{\p \theta^I}} := {\frac{\p}{\p \theta^{\alpha_k}}} \ldots {\frac{\p}{\p \theta^{\alpha_1}}}.
\]
We say that an operator $A$ on $C^\infty(U \times \C^{0|d})$ is Grassmann if it is given by a matrix $A_{KL}^{IJ}$ with constant entries and acts on $f = f_{IJ}\theta^I \bar \theta^J$ as follows,
\[
                      Af = f_{IJ}A_{KL}^{IJ}\, \theta^K \bar \theta^L.
\]
Holomorphic and antiholomorphic functions are well defined on $\Pi E|_U$, but Grassmann operators depend on the trivialization. We call an operator $A$ holomorphic if $A_{KL}^{IJ} = A_K^I \delta_L^J$ and graded antiholomorphic if
\[
                  A_{KL}^{IJ} = (-1)^{|I|(|J|+|L|)}A_L^J \delta_K^I.
\]
Denote by $\delta_K$ and $\bar \delta_L$ the Grassman operators acting on $f = f_{IJ}\theta^I \bar \theta^J$ as follows,
\[
   \delta_K: f_{IJ}\theta^I \bar \theta^J \mapsto f_{KJ}\bar \theta^J, \bar \delta_L:f_{IJ}\theta^I \bar \theta^J \mapsto (-1)^{|I||L|} f_{IL} \theta^I.
\]
We have
\begin{equation}\label{E:delk}
     \delta_K f = {\frac{\p}{\p \theta^K}} f\Big |_{\theta=0} \mbox{ and } \bar \delta_L f = \frac{\p}{\p \bar \theta^L} f \Big |_{\bar \theta=0}.
\end{equation}
We will use the bases
\[
      \left\{\theta^I \delta_K \right\} \mbox{ and } \left\{ \theta^I \frac{\p}{\p \theta^K}\right\}
\]
in the algebra of holomorphic Grassmann operators and the bases
\[
      \left\{\bar\theta^J \bar\delta_L \right\} \mbox{ and } \left\{ \bar\theta^J \frac{\p}{\p \bar \theta^L}\right\}
\]
in the algebra of graded antiholomorphic Grassmann operators. 

In order to define a product on the formal functions on $U \times \C^{0|d}$, we fix a possibly degenerate star product with separation of variables $\star$ on $U$ and an even element 
\begin{equation}\label{E:upq}
u = u_{PQ} \theta^P \bar \theta^Q \in C^\infty(U \times \C^{0|d})[\nu^{-1}\nu]]
\end{equation}
with $u_{\emptyset\emptyset} = 1$, so that $u-1$ is nilpotent. We call the function $u$ admissible with respect to the star product $\star$  if the matrix $(u_{PQ})$ has an inverse over the algebra $(C^\infty(U)[\nu^{-1},\nu]], \star)$, i.e., there exists a matrix $(v^{QP})$ with the entries from $C^\infty(U)[\nu^{-1},\nu]]$ such that
\begin{equation}\label{E:uinv}
u_{PQ} \star v^{QK} = \delta_P^K \mbox{ and } v^{LP} \star u_{PQ} = \delta^L_Q.
\end{equation}
Given $f \in C^\infty(U)[\nu^{-1},\nu]]$, we denote by $L_f^\star$ and $R_f^\star$ the operators of left and right $\star$-multiplication by $f$, respectively. We extend these operators to the space $C^\infty(U\times\C^{0|d})[\nu^{-1},\nu]]$ assuming that they commute with multiplication by the Grassmann variables. Let $\M_\star(U)$ be the $2^d \times 2^d$-matrix algebra over the algebra $(C^\infty(U)[\nu^{-1},\nu]], \star)$ with the matrix entries indexed by the tensor indices $I \subset [d]$. We consider the following homomorphism from $\M_\star(U)$ to the algebra of operators on $C^\infty(U \times \C^{0|d})[\nu^{-1},\nu]]$,
\begin{equation}\label{E:matralg}
        \M_\star(U) \ni (f_K^I) \mapsto u^{-1} \left(L^\star_{f_K^I} \theta^K \delta_I \right) u,
\end{equation}
where, by abuse of notations, we denoted by $u$ the multiplication operator by the element $u$ and by $u^{-1}$ its inverse. Applying the (target) operator from (\ref{E:matralg}) to the unit constant, we get a mapping from $\M_\star(U)$ to $C^\infty(U \times \C^{0|d})[\nu^{-1},\nu]]$,
\begin{equation}\label{E:mop}
     (f_K^I) \mapsto \left\{u^{-1} \left(L^\star_{f_K^I} \theta^K \delta_I \right) u\right\} 1 = u^{-1} (f_K^I \star u_{IL})\theta^K \bar \theta^L.
\end{equation}
The mapping (\ref{E:mop}) is a bijection if and only if the function $u$ is admissible with respect to the star product $\star$.

Now assume that the function $u$ is admissible. Denote by $\ast$ the product on $C^\infty(U \times \C^{0|d})[\nu^{-1},\nu]]$ transferred from the algebra $\M_\star(U)$ via (\ref{E:mop}) and let $L_f$ and $R_f$ be the operators of left and graded right $\ast$-multiplication by a formal function $f$ on $U \times \C^{0|d}$, respectively. Then,
\begin{equation}\label{E:oplf}
                     L_f = u^{-1} \left(L^\star_{f_K^I} \theta^K \delta_I \right) u \mbox{ for } f = u^{-1} (f_K^I \star u_{IL})\theta^K \bar \theta^L.
\end{equation}
It follows that
\begin{equation}\label{E:prod}
                 f \ast g = u^{-1}( (uf)_{KQ} \star v^{QP} \star (ug)_{PL})\theta^K \bar \theta^L,
\end{equation}
where $(uf)_{KL}$ is the formal function on $U$ such that $uf = (uf)_{KL}\theta^K \bar \theta^L$. Also, we have that
\begin{eqnarray}\label{E:oprt}
      R_f = u^{-1} \left(R^\star_{f_L^J} \bar \theta^L \bar \delta_J \right) u\hskip 4cm \\
\mbox{ for }
f = (-1)^{|K|(|J|+|L|)}u^{-1} (u_{KJ} \star f_L^J)\theta^K \bar \theta^L. \nonumber
\end{eqnarray}

Given a star product with separation of variables $\star$ on an open set $U \subset \C^m$ and an admissible function $u$ on 
$U \times \C^{0|d}$, we say that the corresponding product $\ast$ on $U \times \C^{0|d}$ is associated with the pair $(\star,u)$.

A differential product $\ast$ on a split supermanifold $\Pi E$ has the property of separation of variables if for any locally defined holomorphic function $a = a_I \theta^I$ and antiholomorphic function $b = b_J\bar\theta^J$,
\[
                     a \ast f = af \mbox{ and } f \ast b = fb.
\]
It means that both $L_a$ and $R_b$ are {\it left} multiplication operators, $L_a = a$ and $R_b = b$. Clearly, $L_a f = a \ast f = af$. For homogeneous $f$ and $b$ we have
\[
        R_b f = (-1)^{|f| |b|} f \ast b = (-1)^{|f| |b|} fb = bf.
\]

\begin{lemma}\label{L:separ}
Given a possibly degenerate star product with separation of variables $\star$ on an open set $U \subset \C^m$ and an admissible function $u$ on 
$U \times \C^{0|d}$, the product $\ast$ on $U \times \C^{0|d}$ associated with the pair $(\star,u)$ has the property of separation of variables.
\end{lemma}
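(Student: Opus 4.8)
The plan is to prove the two identities by identifying the $\ast$-multiplication operators $L_a$ and $R_b$ with ordinary multiplication operators. For a formal function $\phi$ on $U\times\C^{0|d}$ write $m_\phi$ for the operator of left (supercommutative) multiplication by $\phi$. Since $R_b g = (-1)^{|b||g|} g\ast b$ while $bg = (-1)^{|b||g|} gb$, it suffices to show $L_a = m_a$ for every holomorphic $a = a_I\theta^I$ and $R_b = m_b$ for every antiholomorphic $b = b_J\bar\theta^J$; the identities $a\ast f = af$ and $f\ast b = fb$ then follow at once.

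I would use two elementary observations. First, $u$ is even and therefore central for the supercommutative product, so $m_u$ commutes with $m_a$ and $m_b$; writing $u$ also for $m_u$, this yields $u^{-1}m_a u = m_a$ and $u^{-1}m_b u = m_b$. Second, since $\star$ has separation of variables on $U$, the operator $L^\star_{a_I}$ is just multiplication by the holomorphic function $a_I$ and $R^\star_{b_J}$ is just multiplication by the antiholomorphic function $b_J$, and this stays true after $L^\star$ and $R^\star$ are extended to commute with the Grassmann variables; in particular $m_{a_I} = L^\star_{a_I}$ and $m_{b_J} = R^\star_{b_J}$ as operators on $C^\infty(U\times\C^{0|d})[\nu^{-1},\nu]]$.

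For holomorphic $a = a_I\theta^I$ I would decompose $m_a = \sum_I m_{a_I} m_{\theta^I} = \sum_I L^\star_{a_I} m_{\theta^I}$. An inspection of its matrix shows that $m_{\theta^\alpha}$ leaves the $\bar\theta$-indices untouched, so it is a holomorphic Grassmann operator, and hence so is each $m_{\theta^I}$; expanding the latter in the basis $\{\theta^K\delta_J\}$ with constant coefficients and collecting terms gives $m_a = \sum_{K,J} L^\star_{a^J_K}\theta^K\delta_J$ for holomorphic functions $a^J_K\in C^\infty(U)[\nu^{-1},\nu]]$. By the first observation $m_a = u^{-1}\left(\sum_{K,J} L^\star_{a^J_K}\theta^K\delta_J\right)u$, which by (\ref{E:matralg}) is the operator attached to the matrix $(a^J_K)\in\M_\star(U)$ and, by (\ref{E:mop}), sends the unit constant to $a$; hence by (\ref{E:oplf}) it equals $L_a$, so $L_a = m_a$. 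The case of antiholomorphic $b = b_J\bar\theta^J$ is symmetric, now via (\ref{E:oprt}): one writes $m_b = \sum_J R^\star_{b_J} m_{\bar\theta^J}$, checks that $m_{\bar\theta^\beta}$ --- and hence $m_{\bar\theta^J}$ --- is a graded antiholomorphic Grassmann operator (the sign $(-1)^{|I|}$ from moving $\bar\theta^\beta$ past $\theta^I$ agrees with the sign $(-1)^{|I|(|J|+|L|)}$ in the definition because $|L| = |J|+1$), expands it in the basis $\{\bar\theta^L\bar\delta_N\}$, and concludes $m_b = u^{-1}\left(\sum_{L,N} R^\star_{b^N_L}\bar\theta^L\bar\delta_N\right)u = R_b$ with the $b^N_L$ antiholomorphic.

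The real content is the interplay between the separation-of-variables property of the base product $\star$, which turns the $L^\star$ and $R^\star$ operators by (anti)holomorphic coefficient functions into plain multiplications, and the fact that left multiplication by $\theta^I$ (respectively $\bar\theta^J$) belongs to the algebra of holomorphic (respectively graded antiholomorphic) Grassmann operators; the conjugation by $u$ is harmless by centrality. The one place requiring care is matching index conventions and signs between the definitions of holomorphic and graded antiholomorphic operators and the explicit formulas (\ref{E:oplf}) and (\ref{E:oprt}), together with the observation that these formulas range precisely over the matrices of $\M_\star(U)$, so that once $m_a$ (respectively $m_b$) is written in the required form, its identification with $L_a$ (respectively $R_b$) is forced. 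I do not expect a genuine obstacle beyond this bookkeeping.
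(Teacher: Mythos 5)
Your proof is correct and follows essentially the same route as the paper: you identify the ordinary multiplication operators $m_a$ and $m_b$ with operators of the form appearing in (\ref{E:oplf}) and (\ref{E:oprt}), using that conjugation by the even function $u$ is trivial and that $L^\star_{a_I}$, $R^\star_{b_J}$ reduce to pointwise multiplication by separation of variables for $\star$. The paper's proof is just a terser version of the same argument, and your expansion of $m_{\theta^I}$ and $m_{\bar\theta^J}$ in the Grassmann operator bases with the sign check merely makes explicit what the paper leaves implicit.
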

\begin{proof}
It follows from (\ref{E:oplf}) that the operator 
\[
u^{-1}\left(L^\star_{a_I} {\theta^I}\right) u = a_I {\theta^I} = {a}
\]
is the operator of left $\ast$-multiplication by $a$  which coincides with the operator of left multiplication by $a, \ L_a = {a}$. Similarly, one can derive from (\ref{E:oprt}) that  the graded right $\ast$-multiplication operator $R_b$ is the left multiplication operator by $b$, $R_b =b$.
\end{proof}

\begin{lemma}\label{L:eab} Let $\star$ be a possibly degenerate star product with separation of variables on an open set $U \subset \C^m$ and $u$ be an admissible function with respect to $\star$ on $U \times \C^{0|d}$. If $a$ and $b$ are even nilpotent  formal functions on $U \times \C^{0|d}$ such that $a$ is holomorphic and $b$ is antiholomorphic, then the function
\begin{equation}\label{E:eab}
              \tilde u = e^{a+b}u
\end{equation}
is also admissible  with respect to $\star$. Moreover, the products on $U \times \C^{0|d}$ associated with the pairs $(\star,u)$ and  $(\star, \tilde u)$ coincide.
\end{lemma}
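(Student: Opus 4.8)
The plan is to work with the algebra of left $\ast$-multiplication operators rather than with components. Write $M_h$ for the operator of multiplication by a function $h$, and let $\mathcal A$ be the image of the homomorphism $(f^I_K)\mapsto L^\star_{f^I_K}\theta^K\delta_I$ from $\M_\star(U)$ into the operators on $C^\infty(U\times\C^{0|d})[\nu^{-1},\nu]]$; this is a faithful copy of $\M_\star(U)$, and by (\ref{E:oplf}) the left $\ast$-multiplication operators of the product associated with $(\star,u)$ are exactly the elements of $M_u^{-1}\mathcal A\,M_u$. Since $\unit$ is the identity of that product (it is the image of the identity matrix under (\ref{E:mop})), every $L_f$ satisfies $L_f\unit=f$, so $X\mapsto X\unit$ is a bijection from $M_u^{-1}\mathcal A\,M_u$ onto $C^\infty(U\times\C^{0|d})[\nu^{-1},\nu]]$; this last statement is precisely the admissibility of $u$. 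Because $a$ and $b$ are even, $e^{a+b}=e^ae^b=e^be^a$, the functions $e^{\pm a}$ are holomorphic, nilpotent and invertible, $e^{\pm b}$ are antiholomorphic, nilpotent and invertible, and $\tilde u=e^ae^bu$ has $\tilde u_{\emptyset\emptyset}=1$ with $M_{\tilde u}=M_{e^a}M_{e^b}M_u$. Thus everything will follow once we show that conjugation by $M_{e^a}M_{e^b}$ preserves $\mathcal A$.

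To prove the invariance of $\mathcal A$ I would argue in two steps. First, $M_{e^a}\in\mathcal A$: writing $e^a=(e^a)_S\theta^S$ with holomorphic coefficients $(e^a)_S$ and commuting the $\theta$'s through, one finds that $M_{e^a}$ sends, for each $K=S\sqcup P$, the coefficient $f_{PQ}$ of $f_{PQ}\theta^P\bar\theta^Q$ to $\pm(e^a)_S f_{PQ}$ on the $\theta^K\bar\theta^Q$-component; since $(e^a)_S$ is holomorphic, pointwise multiplication by it coincides with $L^\star_{(e^a)_S}$ by the separation-of-variables property of $\star$. Hence $M_{e^a}=L^\star_{g^I_K}\theta^K\delta_I$ with $g^I_K=\pm(e^a)_{K\setminus I}$ for $I\subseteq K$ and $g^I_K=0$ otherwise, so $M_{e^a}\in\mathcal A$; the same applies to $M_{e^{-a}}=M_{e^a}^{-1}$, and as $\mathcal A$ is an algebra containing $M_{e^a}$ and its inverse, conjugation by $M_{e^a}$ maps $\mathcal A$ into itself. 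Second, $M_{e^b}$ commutes with $\mathcal A$: every element of $\mathcal A$ lies in the algebra generated by the operators $L^\star_f$, the multiplications by $\theta^\alpha$, the derivatives $\p/\p\theta^\alpha$, and the substitution $\theta=0$; since $e^b$ is even and a function of the $\bar\theta$'s and of the base only, $M_{e^b}$ commutes with the last three families, and it commutes with each $L^\star_f$ because $e^b$ is antiholomorphic and left $\star$-multiplications commute with multiplication by antiholomorphic functions (by separation of variables and associativity). Combining the two steps, $M_{e^b}^{-1}M_{e^a}^{-1}\mathcal A\,M_{e^a}M_{e^b}=M_{e^a}^{-1}\mathcal A\,M_{e^a}=\mathcal A$.

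Granting the invariance, $M_{\tilde u}^{-1}\mathcal A\,M_{\tilde u}=M_u^{-1}\bigl(M_{e^b}^{-1}M_{e^a}^{-1}\mathcal A\,M_{e^a}M_{e^b}\bigr)M_u=M_u^{-1}\mathcal A\,M_u$. Consequently the mapping (\ref{E:mop}) for $\tilde u$, which is the composition of the isomorphism $\M_\star(U)\cong\mathcal A$, conjugation by $M_{\tilde u}$, and the map $X\mapsto X\unit$, is a bijection, since on $M_{\tilde u}^{-1}\mathcal A\,M_{\tilde u}=M_u^{-1}\mathcal A\,M_u$ the map $X\mapsto X\unit$ is bijective by admissibility of $u$; hence $\tilde u$ is admissible. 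Finally, for the product associated with $(\star,\tilde u)$ the left multiplication operators form the same algebra $M_{\tilde u}^{-1}\mathcal A\,M_{\tilde u}=M_u^{-1}\mathcal A\,M_u$ as for $(\star,u)$, and since a left multiplication operator is determined within this algebra by its value on $\unit$, the two products have the same left multiplication operators and therefore coincide.

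The main obstacle is the sign bookkeeping behind the two steps of the second paragraph: tracking the Koszul signs produced by reordering Grassmann monomials such as $\theta^S\theta^P$ or $\bar\theta^T\theta^P$, and checking carefully that $M_{e^b}$ commutes with each generator of $\mathcal A$. The evenness of $a$ and $b$ is used throughout — to factor $e^{a+b}=e^ae^b$ and to make the relevant (super)commutators vanish. Conceptually, however, the argument reduces entirely to the single fact that separation of variables of $\star$ identifies multiplication by a holomorphic (resp. antiholomorphic) function on $U$ with a left (resp. right) $\star$-multiplication operator.
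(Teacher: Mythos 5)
Your proof is correct and takes essentially the same route as the paper's: factor $e^{a+b}=e^a e^b$, show that multiplication by the antiholomorphic factor $e^b$ commutes with the algebra of operators $L^\star_{f_K^I}\theta^K\delta_I$ so that the map (\ref{E:matralg}) is unchanged, and show that multiplication by the holomorphic factor $e^a$ is an invertible element of the relevant operator algebra so that conjugation by it preserves that algebra. The paper states the second step as $L_{e^a}=e^a$ inside the algebra of left $\ast$-multiplication operators (via Lemma \ref{L:separ}) rather than locating $M_{e^a}$ in $\mathcal{A}$ itself, but since $M_{e^a}$ commutes with $M_u$ these are the same fact, and your version merely supplies the sign and commutation bookkeeping the paper leaves implicit.
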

\begin{proof}
Given an even nilpotent formal antiholomorphic function $b$ on $U \times \C^{0|d}$, the mapping (\ref{E:matralg})  will not change if we replace the function $u$ with $e^b u$. Hence, the function $e^b u$ is admissible and the products on $U \times \C^{0|d}$ associated with the pairs $(\star, u)$ and $(\star, e^bu)$ coincide. Let $a$ be an even nilpotent formal holomorphic function on $U \times \C^{0|d}$. Then $L_{e^a} = e^a$ is the operator of multiplication by $e^a$. Therefore, the algebra of left $\ast$-multiplication operators is invariant under conjugation by the multiplication operator by $e^a$. If the function $u$ is replaced with $e^au$, then the mapping (\ref{E:mop}) is also bijective. It follows that the function $e^{a}u$ is admissible and the products associated with the pairs $(\star, u)$ and $(\star, e^au)$ coincide.
\end{proof}
Since $\delta_I$ and $\bar \delta_J$ are (left) differential operators, it follows from (\ref{E:oplf}) and (\ref{E:oprt}) that the  product (\ref{E:prod}) is bidifferential. Since $u_{KL}, v^{LK} \in C^\infty(U)[\nu^{-1},\nu]]$, it can be written as (\ref{E:star}). Then (\ref{E:star}) is a star product if $s = 0$ and $C_0(f,g) = fg$.

{\it Example.} Let $U = \{pt\}$ be a point (so that $m=0$), $d =1$, and $u = 1 + \nu^{-n}\theta\bar\theta$. The corresponding product  $\ast$ on $\C^{0|1}$ satisfies $\bar \theta \ast \theta = \bar \theta \theta + \nu^n$. It is a star product if $n \geq 1$. If $n=0$, then $C_0(\bar\theta, \theta) =  \bar\theta \theta + 1$.

Given a complex manifold $M$, let $\star$ be a star product with separation of variables on a coordinate chart $U \subset M$ and $E$ be a holomorphic vector bundle on $M$ holomorphically trivializable over $U$. One can define an admissible function $u$ on $\Pi E|_U$ and a product $\ast$ on $\Pi E|_U$ associated with $(\star, u)$ using the identification of $\Pi E|_U$ with $U \times \C^{0|d}$ via some holomorphic trivialization of $E|_U$. We will show that the product $\ast$ on $\Pi E|_U$ does not depend on the trivialization.

\begin{proposition}\label{P:trivindep}
Given a possibly degenerate star product with separation of variables $\star$ on $U \subset M$ and a function $u \in C^\infty(\Pi E|_U)[\nu^{-1}, \nu]]$ which is admissible for some holomorphic trivialization of $\Pi E|_U$, then $u$ is admissible for any holomorphic trivialization of $\Pi E|_U$ and the corresponding product $\ast$ does not depend on the trivialization.
\end{proposition}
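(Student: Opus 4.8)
The plan is to fix a second holomorphic trivialization of $\Pi E|_U$, with odd fiber coordinates $\{\eta^\alpha,\bar\eta^\beta\}$, related to the first one by holomorphic transition functions $a^\alpha_\gamma$, so that $\theta^\alpha = a^\alpha_\gamma\eta^\gamma$ and $\bar\theta^\beta = b^\beta_\delta\bar\eta^\delta$ with $b^\beta_\delta = \overline{a^\beta_\delta}$, and to compare the product associated with $(\star,u)$ in the $\theta$-trivialization, call it $\ast_\theta$, with the one, $\ast_\eta$, in the $\eta$-trivialization; the intrinsic function $u$ is represented by $u = u_{PQ}\theta^P\bar\theta^Q = u'_{KL}\eta^K\bar\eta^L$ with $u'_{KL} = u_{PQ}a^P_K b^Q_L$. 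Throughout I would use separation of variables for $\star$ on $U$ in the form: left $\star$-multiplication by a holomorphic function and right $\star$-multiplication by an antiholomorphic function are pointwise multiplications; in particular $\star$ coincides with the pointwise product on the holomorphic functions and on the antiholomorphic functions.

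First I would show that admissibility transfers. The transition functions induce the invertible $2^d\times 2^d$ matrices $(a^I_K)$ of holomorphic functions and $(b^J_L)$ of antiholomorphic functions; the inverse matrices are induced from $(a^\alpha_\gamma)^{-1}$ and its conjugate and so are again matrices of holomorphic, resp. antiholomorphic, functions. By the remarks on separation of variables these matrices are invertible already over $(C^\infty(U)[\nu^{-1},\nu]],\star)$, i.e. in $\M_\star(U)$, with the same inverses. Rewriting the transition formula using separation of variables, $u'_{KL} = a^P_K\star u_{PQ}\star b^Q_L$, exhibits the matrix $(u'_{KL})$ as a $\star$-product of three matrices invertible in $\M_\star(U)$ — admissibility of $u$ for the $\theta$-trivialization being exactly the invertibility of $(u_{PQ})$ — hence $(u'_{KL})$ is invertible in $\M_\star(U)$. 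By the criterion following (\ref{E:mop}), $u$ is therefore admissible for the $\eta$-trivialization as well, and $\ast_\eta$ is defined.

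Next I would compare $\ast_\theta$ and $\ast_\eta$ through their left multiplication operators by means of (\ref{E:oplf}), denoting by $L^\theta_f$ and $L^\eta_f$ the left $\ast_\theta$- and $\ast_\eta$-multiplication operators by $f$. The crux is the operator identity
\[
   \eta^K\delta^\eta_I \;=\; (a^{-1})^K_P\,a^M_I\,\theta^P\delta_M,
\]
in which $\delta^\eta_I$ is the analogue of $\delta_M$ for the $\eta$-trivialization, $(a^{-1})^K_P$ is the inverse of the matrix $(a^I_K)$, and $(a^{-1})^K_P a^M_I$ stands for the multiplication operator by that holomorphic function; this follows from a short check on a monomial using (\ref{E:delk}), the transformation rules, and the chain-rule relations $\delta^\eta_I = a^M_I\,\delta_M$, $\eta^K = (a^{-1})^K_P\,\theta^P$. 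Composing with $L^\star_{g^I_K}$ and using that multiplication by the holomorphic function $(a^{-1})^K_P a^M_I$ equals $L^\star_{(a^{-1})^K_P a^M_I}$ together with $L^\star_{h_1}L^\star_{h_2}=L^\star_{h_1\star h_2}$, I obtain
\[
   L^\star_{g^I_K}\,\eta^K\delta^\eta_I \;=\; L^\star_{f^M_P}\,\theta^P\delta_M,\qquad f^M_P := g^I_K\star(a^{-1})^K_P\star a^M_I.
\]
Conjugating by the operator $u$ and applying (\ref{E:oplf}) — for the $\eta$-trivialization on the left and for the $\theta$-trivialization on the right — yields $L^\eta_g = L^\theta_{f'}$, where $f'$ is the image under (\ref{E:mop}) (for the $\theta$-trivialization) of the matrix $(f^M_P)\in\M_\star(U)$ and $g$ is the image under (\ref{E:mop}) (for the $\eta$-trivialization) of $(g^I_K)$.

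Finally, I would evaluate this identity at the unit function $1$. Since $1$ is the image of the identity matrix under (\ref{E:mop}) in either trivialization, it is the unit for both $\ast_\theta$ and $\ast_\eta$, so $g = L^\eta_g 1 = L^\theta_{f'} 1 = f'$. As $(g^I_K)$ ranges over $\M_\star(U)$ the function $g$ ranges over all formal functions on $\Pi E|_U$, so $L^\eta_g = L^\theta_g$ for every $g$, and therefore $g\ast_\eta h = g\ast_\theta h$ for all $g,h$: the two products coincide. The step requiring the most care is the operator identity for $\eta^K\delta^\eta_I$ and the ensuing bookkeeping — one must keep the holomorphic transition data to the left, where it is absorbed by $L^\star$-operators, and the antiholomorphic transition data to the right; this asymmetry is exactly the one imposed by separation of variables and by the position of the holomorphic odd variables in $u = u_{PQ}\theta^P\bar\theta^Q$, and disregarding it — for instance, trying to pull an antiholomorphic factor through an $L^\star$-operator — would break the argument.
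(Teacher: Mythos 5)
Your proposal is correct. The first half (transfer of admissibility) is exactly the paper's argument: separation of variables turns the pointwise identity $u'_{KL}=u_{PQ}a^P_K b^Q_L$ into $a^P_K\star u_{PQ}\star b^Q_L$, exhibiting $(u'_{KL})$ as a product of invertible elements of $\M_\star(U)$. For the independence of the product the two arguments diverge in bookkeeping though not in substance. The paper passes to the equivalent product $f\circ g=(f_{IQ}\star v^{QP}\star g_{PJ})\theta^I\bar\theta^J$ via $f\mapsto uf$ and checks by a direct matrix computation that $f'_{KQ}\star w^{QP}\star g'_{PL}=h'_{KL}$, using $w^{LK}=\tilde b^L_Q\star v^{QP}\star\tilde a^K_P$. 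You instead stay inside the operator realization (\ref{E:matralg}): the identity $\eta^K\delta^\eta_I=\tilde a^K_P a^M_I\,\theta^P\delta_M$ (which is correct, since $\delta^\eta_I=a^M_I\delta_M$ and $\eta^K=\tilde a^K_P\theta^P$, all coefficients being even) shows that the two homomorphisms from $\M_\star(U)$ have the same image, and evaluation at $1$ shows that the induced labellings of operators by functions agree, whence $L^\eta_g=L^\theta_g$ for all $g$. Both proofs hinge on the same mechanism — holomorphic transition data is absorbed into left $\star$-multiplications and antiholomorphic data sits harmlessly on the right — and your closing caution about not pulling antiholomorphic factors through $L^\star$-operators is exactly the point. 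What your version buys is the explicit statement that the algebra of left $\ast$-multiplication operators is trivialization-independent, which is conceptually clean; the paper's version is a shorter computation and handles the left and right structures symmetrically in one line.
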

\begin{proof}
Consider two holomorphic trivializations of $\Pi E|_U$ with odd fiber coordinates $\theta^k, \bar\theta^l$ and $\eta^k, \bar\eta^l$, respectively, and assume that $u$ is admissible for the former one. The holomorphic transition functions of $E$ over $U$ induce invertible matrices with holomorphic entries $a_K^I$ and matrices with antiholomorphic entries $b_L^J$ from $C^\infty(U)$ such that
\begin{equation}\label{E:etatheta}
           \theta^I = a^I_K \eta^K \mbox{ and } \bar\theta^J = b_L^J \bar\eta^L.
\end{equation}
We will denote their inverse matrices by $\tilde a_K^I$ and $\tilde b_L^J$, respectively. These matrices are also their inverses over the algebra $(C^\infty(U)[\nu^{-1},\nu]],\star)$,
\begin{equation}\label{E:ainv}
     a_K^I \tilde a_P^K =  a_K^I \star \tilde a_P^K = \delta^I_P,  b_L^J \tilde b_Q^L =  b_L^J \star \tilde b_Q^L = \delta^J_Q.
\end{equation}
A function $f$ on $\Pi E|_U$ can be written in coordinates as follows, $f=f_{IJ}\theta^I \bar\theta^J =  f'_{KL}\eta^K \bar\eta^L$, where $f_{IJ}$ and $f'_{KL}$ are functions on $U$.  Then
\begin{equation}\label{E:ffpr}
                  f'_{KL} = f_{IJ} a^I_K b^J_L =  a^I_K \star f_{IJ} \star b^J_L.
\end{equation}
It follows from (\ref{E:ffpr}) for $f = u$ that the inverse matrix of $u'_{KL}$ over the algebra $(C^\infty(U)[\nu^{-1},\nu]],\star)$ is
\begin{equation}\label{E:wv}
            w^{LK} := \tilde b^L_Q \star v^{QP} \star \tilde a^K_P.
\end{equation}
Thus $u$ is admissible for the second trivialization. Consider the product 
\[
         f \circ g := (f_{IQ} \star v^{QP} \star g_{PJ}) \theta^I \bar \theta^J
\]
equivalent to the product (\ref{E:prod}) via the mapping $f \mapsto uf$. In order to prove the proposition it suffices to prove that the product $\circ$
does not depend on the trivialization. Set
\[
               h_{IJ}: = f_{IQ} \star v^{QP} \star g_{PJ},  \mbox{ so that }  f \circ g = h_{IJ}\theta^I \bar\theta^J.
\]
We have from (\ref{E:ainv}), (\ref{E:ffpr}), and (\ref{E:wv}) that
\begin{eqnarray*}
   f'_{KQ} \star w^{QP} \star g'_{PL} = a^I_K \star f_{IJ} \star b^J_S \star \tilde b^S_Q \star v^{QP} \star \tilde a^R_P \star  a^I_R \star g_{IJ} \star b^J_L =\\
a^I_K \star f_{IQ} \star v^{QP} \star g_{PJ} \star b^J_L = h'_{KL},
\end{eqnarray*}
whence the proposition follows.
\end{proof}
The following corollary is immediate.
\begin{corollary}
    Given a possibly degenerate star product with separation of variables $\star$ on $M$ and a globally defined admissible function $u \in C^\infty(\Pi E)[\nu^{-1}, \nu]]$, then the corresponding product $\ast$ is globally defined on the functions on $\Pi E$.
\end{corollary}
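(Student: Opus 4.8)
The plan is to produce $\ast$ by gluing the local products constructed in Section~\ref{S:split}. First I would fix an open cover $\{U_i\}$ of $M$ by coordinate charts over which $E$ is holomorphically trivializable, and choose a holomorphic trivialization $\Pi E|_{U_i}\cong U_i\times\C^{0|d}$ for each $i$. Since $\star$ is a star product with separation of variables on all of $M$, it restricts to such a product on every $U_i$ and on every overlap, and since $u$ is admissible its restriction $u|_{U_i}$ is admissible with respect to $\star|_{U_i}$ in the chosen trivialization. Formula (\ref{E:prod}) then yields a bidifferential product $\ast_i$ on $C^\infty(\Pi E|_{U_i})[\nu^{-1},\nu]]$ associated with the pair $(\star|_{U_i},u|_{U_i})$, and by Proposition~\ref{P:trivindep} this $\ast_i$ does not depend on the trivialization chosen over $U_i$.

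The next step is to check that the $\ast_i$ agree on overlaps. Put $W:=U_i\cap U_j$. A star product restricts to the open set $W$, and so does the $\star$-inverse $(v^{QP})$ of the matrix $(u_{PQ})$ that enters (\ref{E:prod}); hence $\ast_i$ restricted to $\Pi E|_W$ is, by formula (\ref{E:prod}), the product on $\Pi E|_W$ associated with $(\star|_W,u|_W)$ in the restricted $i$-th trivialization, and likewise $\ast_j|_W$ is the product associated with $(\star|_W,u|_W)$ in the restricted $j$-th trivialization. Applying Proposition~\ref{P:trivindep} to the chart $W$ then forces these two products on $\Pi E|_W$ to coincide, so $\ast_i$ and $\ast_j$ restrict to one and the same product on $\Pi E|_W$.

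With overlap compatibility in hand the gluing is automatic: for global $f,g\in C^\infty(\Pi E)[\nu^{-1},\nu]]$ the locally defined functions $f|_{U_i}\ast_i g|_{U_i}$ coincide on all overlaps and therefore patch together to a single globally defined function $f\ast g$ on $\Pi E$. Associativity, $\nu$-linearity and $\nu$-adic continuity, unitality of $1$, and the bidifferential (hence differential) property are local conditions, so they pass from each $\ast_i$ to $\ast$, which is then the sought global product associated with $(\star,u)$. I do not expect a genuine obstacle here; the one point that really has to be verified is the overlap compatibility, and it is handed to us by Proposition~\ref{P:trivindep}\,---\,this is precisely why the corollary is immediate. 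The only minor care required is to invoke the (already recorded) fact that a star product, and the $\star$-inverse of $(u_{PQ})$, restrict to open subsets.
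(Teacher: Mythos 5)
Your proposal is correct and is exactly the argument the paper intends: the corollary is stated as immediate from Proposition~\ref{P:trivindep}, and your gluing over a trivializing cover, with overlap compatibility supplied by applying that proposition on $U_i\cap U_j$, is the same route spelled out in detail. No gaps.
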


Let $\star$ be a star product with separation of variables on a pseudo-K\"ahler manifold $M$, $E$ be a holomorphic vector bundle on $M$ holomorphically trivializable over $U \subset M$, $u \in C^\infty(\Pi E|_U)[\nu^{-1}, \nu]]$ be admissible with respect to $\star$, and $\ast$ be the product on $C^\infty(\Pi E|_U)[\nu^{-1}, \nu]]$ associated with $(\star, u)$. In the rest of this section we define a canonical supertrace functional $\sigma$ on the algebra $( C^\infty(\Pi E|_U)[\nu^{-1}, \nu]], \ast)$. 

Fix a holomorphic trivialization $\Pi E|_U \cong U \times \C^{0|d}$. We say that a formal function 
\[
f = \nu^r f_{r, IJ} \theta^I \bar\theta^J + \nu^{r + 1} f_{r + 1, IJ} \theta^I \bar\theta^J + \ldots
\]
on $\Pi E|_U$ has compact support if each coefficient $f_{r, IJ} \in C^\infty(U)$ has compact support (but there may be no common comact support for all $f_{r, IJ}$). Clearly, this notion does not depend on the trivialization.

We define a $\Z_2$-grading on the algebra $\M_\star(U)$ by setting the parity of a tensor index $I$ to be equal to the parity of $|I|$. The corresponding $\Z_2$-grading on the algebra $(C^\infty(U \times \C^{0|d})[\nu^{-1},\nu]],\ast)$ is the standard one given by the parity of its elements.

The star pro\-duct $\star$ has a canonical trace density $\mu_\star$ and there exists a supertrace on the elements of $\M_\star(U)$ with compact support defined as follows,
\[
       \mathrm{Str} (f_K^I) = \sum_I \int_U (-1)^{|I|}f_I^I \, \mu_\star.
\]
We define a supertrace $\sigma$ on the algebra $(C^\infty(\Pi E|_U)[\nu^{-1},\nu]],\ast)$ using a trivialization of $E|_U$. Given a formal function $f$ on $\Pi E|_U \cong U \times \C^{0|d}$ with compact support written as $f = u^{-1} (f_I^S \star u_{ST})\theta^I \bar \theta^T$ for some matrix $(f_K^I) \in \M_\star(U)$, we set
\[
      \sigma(f) = \mathrm{Str} (f_K^I).
\]

\begin{lemma}\label{L:sigind}
The functional $\sigma$ does not depend on the trivialization.
\end{lemma}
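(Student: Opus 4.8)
The plan is to fix a compactly supported formal function $f$ on $\Pi E|_U$, to compare the two matrices over $(C^\infty(U)[\nu^{-1},\nu]],\star)$ that represent $f$ in two holomorphic trivializations, and to show that these matrices have equal supertrace. Let the trivializations have odd fiber coordinates $\theta^I,\bar\theta^J$ and $\eta^K,\bar\eta^L$, related as in Proposition \ref{P:trivindep} by $\theta^I=a^I_K\eta^K$ and $\bar\theta^J=b^J_L\bar\eta^L$, with holomorphic transition matrices $a^I_K$ and their inverses $\tilde a^K_M$; note that $a^I_K$ and $\tilde a^I_K$ vanish unless $|I|=|K|$, being induced on the exterior powers of $E$. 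By Proposition \ref{P:trivindep} the function $u$ is admissible for both trivializations and the associated product $\ast$ is the same, so, by bijectivity of \eqref{E:mop} in each trivialization, $f$ is represented by a unique matrix $(F^I_K)$ via $f=\{u^{-1}(L^\star_{F^I_K}\theta^K\delta_I)u\}1$ and by a unique matrix $(G^I_K)$ via $f=\{u^{-1}(L^\star_{G^I_K}\eta^K\delta'_I)u\}1$, where $\delta'_I$ is the operator $\delta_I$ of the second trivialization. It suffices to prove $\mathrm{Str}(F)=\mathrm{Str}(G)$.

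The first step is to rewrite the holomorphic Grassmann operators of the second trivialization through those of the first. From $f'_{IL}=f_{PQ}\,a^P_I\,b^Q_L$ together with $b^Q_L\bar\eta^L=\bar\theta^Q$ one reads off $\delta'_I=L^\star_{a^P_I}\delta_P$, and inverting $\theta^M=a^M_K\eta^K$ shows that left multiplication by $\eta^K$ equals $L^\star_{\tilde a^K_M}\theta^M$. Since multiplications by Grassmann variables commute with $\star$-multiplications and $L^\star$ is multiplicative, this gives
\[
   \eta^K\delta'_I=L^\star_{\tilde a^K_M\star a^P_I}\,\theta^M\delta_P .
\]
Substituting this into the second representation of $f$ and invoking the uniqueness of the first matrix representation yields
\[
   F^P_M=G^I_K\star\tilde a^K_M\star a^P_I
\]
(with summation over $I$ and $K$).

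The last step is the supertrace bookkeeping. In $F^P_P=G^I_K\star\tilde a^K_P\star a^P_I$ a summand can be nonzero only when $|P|=|K|$ (so that $\tilde a^K_P\neq 0$) and $|P|=|I|$ (so that $a^P_I\neq 0$), and there $(-1)^{|P|}=(-1)^{|I|}$. Moreover $a$ and $\tilde a$ have holomorphic entries, so by separation of variables their $\star$-products are pointwise products and $\sum_P\tilde a^K_P\star a^P_I=(\tilde A A)^K_I=\delta^K_I$. Hence
\begin{align*}
   \mathrm{Str}(F)&=\sum_P(-1)^{|P|}\int_U F^P_P\,\mu_\star
   =\sum_{I,K}(-1)^{|I|}\int_U G^I_K\star\Big(\sum_P\tilde a^K_P\star a^P_I\Big)\mu_\star\\
   &=\sum_I(-1)^{|I|}\int_U G^I_I\,\mu_\star=\mathrm{Str}(G),
\end{align*}
as required. (One should also observe that the entries of $F$ and of $G$ have compact support because $f$ does, so both supertraces are defined; notice that the computation uses only the algebraic change-of-matrix formula together with $\tilde A\star A=I$, and not the trace property of $\mu_\star$.) I expect the genuine work to be in the first step: correctly transforming $\eta^K\delta'_I$ into the first trivialization and justifying the transfer of the matrix representation; once that formula for $F^P_M$ is in hand, the parity bookkeeping and the collapse of the sum are routine.
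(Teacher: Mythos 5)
Your overall strategy --- represent $f$ by matrices in the two trivializations, derive the change-of-matrix formula, and then do the supertrace bookkeeping --- is the same as the paper's, but your change-of-matrix formula is wrong, and the error sits exactly where you predicted the ``genuine work'' would be. The operator $L^\star_{G^I_K}$ appearing in the $\eta$-representation is, by definition, the extension of left $\star$-multiplication that commutes with multiplication by the Grassmann variables $\eta,\bar\eta$ of \emph{that} trivialization; this is \emph{not} the same operator as the extension commuting with multiplication by $\theta,\bar\theta$, because $\theta^M=a^M_K\eta^K$ with $a^M_K$ holomorphic and $h\star a^M_K\neq a^M_K\star h=a^M_K h$ in general. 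So after you rewrite $\eta^K\delta'_I=L^\star_{\tilde a^K_M\star a^P_I}\theta^M\delta_P$ (which is fine as an operator identity), you are not entitled to pull $L^\star_{\tilde a^K_M\star a^P_I}$ through $L^\star_{G^I_K}$ to form $L^\star_{G^I_K\star\tilde a^K_M\star a^P_I}$ \emph{and} read off the result as the $\theta$-representation: the composite is the $\eta$-adapted extension, while the $\theta$-representation uses the $\theta$-adapted one. The correct relation is $F^P_M=\tilde a^K_M\star G^I_K\star a^P_I$, equivalently $a^M_N\star F^P_M=G^I_N\star a^P_I$, which is exactly the identity $a_K^I\star f_I^S={f'}_K^P\star a^S_P$ the paper derives by working with the coefficient functions. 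A concrete check with $d=1$, $\theta=a\eta$, $u=1+\nu^{-1}h\theta\bar\theta$, and $G^1_1=g$ (all other entries zero): direct computation of $f=u^{-1}(g\star u'_{11})\eta\bar\eta$ in the $\theta$-trivialization gives $F^1_1=a^{-1}\star g\star a=a^{-1}(g\star a)$, whereas your formula gives $F^1_1=g\star a^{-1}\star a=g$; these differ already at order $\nu$ unless $g$ is holomorphic.

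This error is what makes your parenthetical claim --- that the argument uses only $\tilde A\star A=I$ and not the trace property of $\mu_\star$ --- too good to be true. With the correct formula, $\tilde a^K_P$ and $a^P_I$ sit on \emph{opposite} sides of $G^I_K$ in $F^P_P=\tilde a^K_P\star G^I_K\star a^P_I$, and since $X\star a$ for holomorphic $a$ is not a pointwise product, you cannot collapse $\sum_P\tilde a^K_P\star(\cdots)\star a^P_I$ to $\delta^K_I$ algebraically. You must first invoke the cyclicity $\int x\star y\,\mu_\star=\int y\star x\,\mu_\star$ of the canonical trace density to bring $a^P_I$ around to the front, after which $\sum_P a^P_I\tilde a^K_P=\delta^K_I$ (a pointwise product of holomorphic functions) finishes the computation. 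This is precisely the content of the paper's closing sentence that $a_K^I\star f_I^S={f'}_K^P\star a^S_P$ implies $\mathrm{Str}(f_K^I)=\mathrm{Str}({f'}_K^I)$. Your parity bookkeeping ($a^I_K=0$ unless $|I|=|K|$, hence $(-1)^{|P|}=(-1)^{|I|}$ on all surviving terms) is correct and is needed in the repaired argument as well.
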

\begin{proof}
Let the odd coordinates $\theta^k, \bar\theta^l$ and $\eta^k, \bar\eta^l$ and the matrices $(a_K^I)$ and $(b_L^J)$ be as in the proof of Proposition \ref{P:trivindep} so that (\ref{E:etatheta}) holds. If $f$ is a formal function with compact support on $\Pi E|_U$, then
\[
       f = u^{-1} (f_I^S \star u_{ST})\theta^I \bar \theta^T = u^{-1} ({f'}_K^P \star {u'}_{PL})\eta^K \bar \eta^L
\]
for some matrices $(f_K^I)$ and   $({f'}_K^I)$, where $u_{IJ}$ and ${u'}_{KL}$ are connected  according to (\ref{E:ffpr}). Now,
\[
       (f_I^S \star u_{ST}) a_K^I b^T_L  =  {f'}_K^P \star (u_{ST} a^S_P b^T_L ).
\]
We have thus,
\[
     a_K^I \star f_I^S \star u_{ST} \star b^T_L = {f'}_K^P \star  a^S_P \star u_{ST} \star b^T_L,
\] 
whence
\[
           a_K^I \star f_I^S =    {f'}_K^P \star  a^S_P.
\]
The lemma follows from the fact that then $\mathrm{Str} (f_K^I) = \mathrm{Str} ({f'}_K^I)$.
\end{proof}

We want to show that the supertrace $\sigma$ for the product $\ast$ can be given by a Berezin supertrace density. Berezin integral on $\C^{0|d}$  is defined as follows,
\[
     \int f_{IJ} \theta^I \bar \theta^J\,  d\theta d\bar\theta = f_{[d] [d]}.
\]
Given $I= \{\alpha_1, \ldots, \alpha_k\} \subset [d]$, set $I' := [d] \setminus I$. Then we have
\[
     \theta^I \theta^{I'} = (-1)^{\varepsilon(I)} \theta^{[d]}, \mbox{ where } \varepsilon(I) := \alpha_1 + \ldots \alpha_k - \frac{k(k+1)}{2}.
\]
\begin{theorem}\label{T:trdens}
Let $\star$ be a nondegenerate star product with separation of variables  on an open subset $U \subset \C^m$,  $u$ be an admissible function on $U \times \C^{0|d}$, and $\ast$ be the product associated with the pair $(\star,u)$. There exists a unique canonical formal Berezin supertrace density 
\begin{equation}\label{E:murho}
     \mu  = \rho\,  dz d\bar z d\theta d\bar\theta
\end{equation}
for the product $\ast$, where $\rho \in C^\infty(U \times \C^{0|d})[\nu^{-1},\nu]]$ and $dz d\bar z$ is a Lebesgue measure on $U$, such that the canonical supertrace functional $\sigma$ of the product $\ast$ is given by the Berezin integral
\begin{equation}\label{E:ber}
               \sigma(f) = \int f \, \mu.
\end{equation}
In (\ref{E:ber}) $f$ is a formal function with compact support on $U \times \C^{0|d}$.
\end{theorem}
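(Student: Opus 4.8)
\emph{Plan of proof.} Fix a holomorphic trivialization $\Pi E|_U \cong U \times \C^{0|d}$ and work in it throughout; trivialization independence of the end result will follow from Lemma~\ref{L:sigind} together with the uniqueness proved below. In the chosen trivialization the statement reduces to the existence of a unique $\rho \in C^\infty(U \times \C^{0|d})[\nu^{-1},\nu]]$ with
\[
   \sigma(f) = \int_U (\rho f)_{[d][d]}\, dz\, d\bar z
\]
for every compactly supported $f = f_{IJ}\theta^I\bar\theta^J$ on $U \times \C^{0|d}$, where $(\rho f)_{[d][d]}$ denotes the $\theta^{[d]}\bar\theta^{[d]}$-coefficient of the pointwise product $\rho f$.

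\emph{Uniqueness.} A direct expansion shows that the coefficient of a given component $f_{IJ}$ inside $(\rho f)_{[d][d]}$ equals $\pm\rho_{I'J'}$, with a nonzero sign depending only on $I$ and $J$. Choosing $f = f_{IJ}\theta^I\bar\theta^J$ with a single, otherwise arbitrary, compactly supported $f_{IJ}$, the vanishing of $\int_U(\rho f)_{[d][d]}\,dz\,d\bar z$ forces $\rho_{I'J'}=0$; letting $(I,J)$ range over all index pairs gives $\rho=0$. Hence $\rho$ is unique if it exists. Since $\sigma$ is a supertrace, the corresponding $\mu$ is automatically a supertrace density, and there is no residual normalization freedom, so $\mu$ is canonical; its normalization is inherited from the canonical normalization of the bosonic trace density $\mu_\star$, which is available precisely because $\star$ is nondegenerate.

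\emph{Existence.} Start from $\sigma(f)=\mathrm{Str}(f^I_K)=\sum_I(-1)^{|I|}\int_U f^I_I\,\mu_\star$, where $(f^I_K)\in\M_\star(U)$ is the matrix with $(uf)_{IL}=f^S_I\star u_{SL}$, so that $f^I_I=\sum_L (uf)_{IL}\star v^{LI}$. The coefficient $(uf)_{IL}$ of the pointwise product $uf$ is a $C^\infty(U)$-linear combination $\sum(\pm)\,u_{PQ}f_{AB}$ of the components of $f$, with coefficients formed from the components of $u$; hence each $f^I_I$ is obtained from the $f_{AB}$ by applying finitely many formal differential operators on $U$, namely multiplications by components of $u$ followed by the right $\star$-multiplication operators $R^\star_{v^{LI}}$ (which are differential since $\star$ is a differential product). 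Writing $\mu_\star=\rho_\star\,dz\,d\bar z$ and integrating by parts over $U$ — legitimate because the $f_{AB}$ are compactly supported — we transfer these operators onto $\rho_\star$ and obtain
\[
   \sigma(f)=\sum_{A,B}\int_U f_{AB}\,\sigma_{AB}\,dz\,d\bar z,
\]
where $\sigma_{AB}\in C^\infty(U)[\nu^{-1},\nu]]$ is the corresponding combination of formal adjoints (with respect to $dz\,d\bar z$) of the above operators applied to $\rho_\star$, multiplied by components of $u$; it has a finite principal part in $\nu$ because $\rho_\star$ and the $v^{LI}$ do. Now put $\rho:=\sum_{A,B}(\pm)\,\sigma_{AB}\,\theta^{A'}\bar\theta^{B'}$, with signs chosen so that $(\rho f)_{[d][d]}=\sum_{A,B}(\pm)\,\sigma_{AB}f_{AB}$ reproduces the last display; then $\sigma(f)=\int_U(\rho f)_{[d][d]}\,dz\,d\bar z$, and $\mu=\rho\,dz\,d\bar z\,d\theta\,d\bar\theta$ has the claimed properties.

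\emph{Main obstacle.} The delicate step is this integration by parts: one must verify that transporting the operators $R^\star_{v^{LI}}$ — which is where the $\star$-products get absorbed, entangled with the Grassmann combinatorics of $(uf)_{IL}$ — onto the canonical bosonic density $\rho_\star$ produces a genuine formal density on $U$ with finite principal part, and that the resulting $\rho$, constructed in one trivialization, is independent of that choice; the latter is immediate from Lemma~\ref{L:sigind} and uniqueness. The existence and canonical normalization of $\mu_\star$ are used as a black box, and this is the only place nondegeneracy of $\star$ is needed.
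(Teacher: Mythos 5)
Your proposal is correct, but it reaches the conclusion by a genuinely different route than the paper. You split the statement into uniqueness (via nondegeneracy of the Berezin pairing: the $f_{IJ}$-coefficient of $(\rho f)_{[d][d]}$ is $\pm\rho_{I'J'}$, so testing against single compactly supported components forces $\rho=0$) and existence (by expanding $f_I^I=(uf)_{IL}\star v^{LI}$ as formal differential operators applied to the components $f_{AB}$ and integrating by parts against $\mu_\star=\rho_\star\,dz\,d\bar z$ to land the formal adjoints on $\rho_\star$). The paper instead makes the ansatz $u^{-1}\rho\,dz\,d\bar z=\tau\mu_\star$ and uses the identity $\int fg\,\mu_\star=\int f\star\I_\star g\,\mu_\star$ to convert condition (\ref{E:ber}) into the algebraic equation $\sum_{K,Q}(-1)^{|K|+\lambda(K,Q)}u_{PQ}\star\I_\star\tau_{K'Q'}=\delta_P^K$ over $(C^\infty(U)[\nu^{-1},\nu]],\star)$, which is solved in closed form by the inverse matrix: $\tau_{KQ}=(-1)^{|K'|+\lambda(K',Q')}\I_\star^{-1}v^{Q'K'}$; existence and uniqueness then both drop out of this equivalence at once. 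The trade-off is real: your adjoint argument is softer and shows more generally that any functional of the form $f\mapsto\mathrm{Str}(f_K^I)$ is represented by a Berezin density, but it does not identify the density, whereas the paper's explicit formula (\ref{E:taudens}) is exactly what is used afterwards in Corollary \ref{C:cantrden} to compute the leading term $\nu^{d-m}\psi$ of $\rho_{\emptyset\emptyset}$ and hence to normalize the supertrace density $e^{X+X'}dz\,d\bar z\,d\theta\,d\bar\theta$. If you want your proof to support those later applications you would need to supplement it with that computation (or simply observe that your $\sigma_{AB}$ must coincide with the paper's $\tau$-based expression by your own uniqueness step). Two minor points: your appeal to Lemma \ref{L:sigind} is not needed for the theorem as stated, since it concerns a fixed $U\times\C^{0|d}$; and your integration-by-parts step is legitimate but should note explicitly that each $\nu$-coefficient of $(D_{AB}^I)^t\rho_\star$ is a finite sum because both $v^{LI}$ and $\rho_\star$ have finite principal parts.
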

\begin{proof}
We will be looking for a function $\tau = \tau_{IJ}\theta^I \bar \theta^J$ such that
\[
       u^{-1} \rho\, dz d\bar z = \tau \mu_\star.
\]
For $f = u^{-1}(f_K^P \star u_{PQ})\theta^K \bar \theta^Q$ we have
\begin{eqnarray*}
    \sigma(f) =  \int (f_K^P \star u_{PQ})\theta^K \bar \theta^Q \, (\tau_{IJ}\theta^I \bar \theta^J) \mu_\star d\theta d\bar\theta =\\
    \sum_{K,Q} \int (f_K^P \star u_{PQ})\theta^K \bar \theta^Q \, (\tau_{K' Q'}\theta^{K'} \bar \theta^{Q'}) \mu_\star d\theta d\bar\theta =\\
\sum_{K,Q}\int (-1)^{\lambda(K,Q)} (f_K^P \star u_{PQ}) \tau_{K' Q'}\mu_\star,
\end{eqnarray*}
where $\lambda(K,Q) := |K'||Q| + \varepsilon (K) + \varepsilon (Q)$. Now (\ref{E:ber}) is equivalent to the equation
\begin{equation}\label{E:equiv}
    \sum_I  \int (-1)^{|I|}f_I^I \, \mu_\star = \sum_{K,Q}\int(-1)^{\lambda(K,Q)}  (f_K^P \star u_{PQ}) \tau_{K' Q'}\mu_\star.
\end{equation}
We will use the following identity proved in \cite{LMP2} (see also Proposition~ \ref{P:bert}),
\[
      \int_M f  g \, \mu_\star = \int_M f \star \I_\star g \, \mu_\star,
\]
where $\I_\star$ is the formal Berezin transform of the star product $\star$ and $f$ or $g$ has compact support. We have
\begin{eqnarray}\label{E:long}
       \nonumber\sum_{K,Q}\int (-1)^{\lambda(K,Q)} (f_K^P \star u_{PQ}) \tau_{K' Q'}\mu_\star =\\
  \sum_{K,Q}\int (-1)^{\lambda(K,Q)} f_K^P \star u_{PQ} \star \I_\star \tau_{K' Q'}\mu_\star =\\
   \nonumber\sum_{K,Q}\int (-1)^{\lambda(K,Q)} f_K^P\,  \I_\star^{-1} \left(u_{PQ} \star \I_\star \tau_{K' Q'}\right)\mu_\star
\end{eqnarray}
Taking into account that $\I_\star 1 = 1$, we see from (\ref{E:equiv}) and (\ref{E:long}) that (\ref{E:ber}) is equivalent to the equation
\[
       \sum_{K,Q}(-1)^{|K| + \lambda(K,Q)}u_{PQ} \star \I_\star \tau_{K' Q'} = \delta_P^K.
\]
Therefore,
\[
         (-1)^{|K| + \lambda(K,Q)} \I_\star \tau_{K' Q'} = v^{QK},
\]
whence we get that
\begin{equation}\label{E:taudens}
	        \tau_{KQ} = (-1)^{|K'| + \lambda(K',Q')} \I_\star^{-1} v^{Q'K'}.
\end{equation}
The statement of the theorem follows.
\end{proof}

\section{A star product on $U \times \C^{0|d}$}\label{S:superstar}
Given a possibly degenerate star product with separation of variables $\star$ on an open set $U \subset \C^m$, we introduce a class of admissible functions on $U \times \C^{0|d}$  for which the associated product $\ast$ is a star product. First we prove two technical statements.
\begin{proposition}\label{P:expq}
Let $(a_{\alpha \gamma}),(b_{\alpha\beta}), \mbox{ and } (c_{\beta\delta})$ be $d \times d$-matrices with constant coefficients and $(b_{\alpha\beta})$ be nondegenerate.
Consider the function
\[
      w = w_{IJ} \theta^I \bar\theta^J = e^Z
\]
on $\C^{0|d}$, where
\begin{equation*}
   Z = \frac{1}{2}a_{\alpha\gamma}\theta^\alpha \theta^\gamma + b_{\alpha\beta} \theta^\alpha \bar \theta^\beta + \frac{1}{2} c_{\beta\delta} \bar\theta^\beta \bar \theta^\delta. 
\end{equation*}
Then the $2^d \times 2^d$-matrix $(w_{IJ})$ is nondegenerate.
\end{proposition}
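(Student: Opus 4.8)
The plan is to reduce the nondegeneracy of the full $2^d\times 2^d$ matrix $(w_{IJ})$ to a computation in a Clifford-type algebra, exploiting the fact that $w=e^Z$ where $Z$ is a purely quadratic (even) element of the Grassmann algebra in the variables $\theta^\alpha,\bar\theta^\beta$. The key structural observation is that left multiplication by $w$ on the $2^d$-dimensional space $C^\infty(\C^{0|d})=\wedge(\C^d\oplus\C^d)^\ast$ is the exponential of left multiplication by $Z$, and the matrix $(w_{IJ})$ that appears in the statement is, up to the obvious bookkeeping identifying $\theta^I\bar\theta^J$ with a basis of this space, exactly the matrix of this operator applied to the constant function $1$. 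So I want to show that $w$, as an element of the Grassmann algebra, has the property that the matrix of coefficients in the basis $\{\theta^I\bar\theta^J\}$ is invertible.

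First I would handle the generic case $(a_{\alpha\gamma})=(c_{\beta\delta})=0$, so that $Z=b_{\alpha\beta}\theta^\alpha\bar\theta^\beta$ with $(b_{\alpha\beta})$ nondegenerate. After a linear change of the $\theta$'s (or the $\bar\theta$'s) by $(b_{\alpha\beta})$ — which is an invertible transformation and hence changes $(w_{IJ})$ only by an invertible factor — one reduces to $Z=\sum_\alpha \theta^\alpha\bar\theta^\beta$ with $b=\mathrm{id}$, i.e. $Z=\sum_{\alpha=1}^d \theta^\alpha\bar\theta^\alpha$. Since the summands $\theta^\alpha\bar\theta^\alpha$ are even and pairwise commuting, $e^Z=\prod_{\alpha=1}^d(1+\theta^\alpha\bar\theta^\alpha)$, and the problem factors as a $d$-fold tensor product of the $d=1$ case. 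For $d=1$ one checks directly that $1+\theta\bar\theta$ has matrix $\begin{pmatrix}1&0\\0&1\end{pmatrix}$ in the basis $\{1,\theta\bar\theta\}$ (with $\theta,\bar\theta$ in the other slots mapping to $\theta,\bar\theta$), which is obviously invertible; the tensor product of invertible matrices is invertible.

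Next I would incorporate the quadratic terms $\frac12 a_{\alpha\gamma}\theta^\alpha\theta^\gamma$ and $\frac12 c_{\beta\delta}\bar\theta^\beta\bar\theta^\delta$. The cleanest way is to write $Z=Z_a+Z_b+Z_c$ with $Z_a=\frac12 a_{\alpha\gamma}\theta^\alpha\theta^\gamma$ purely holomorphic, $Z_c=\frac12 c_{\beta\delta}\bar\theta^\beta\bar\theta^\delta$ purely antiholomorphic, and $Z_b=b_{\alpha\beta}\theta^\alpha\bar\theta^\beta$. Now $e^{Z_a}$ is a holomorphic even element with constant term $1$, hence a unit in the Grassmann algebra, so left multiplication by it is an invertible operator; similarly for $e^{Z_c}$. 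Since $Z_a$ and $Z_c$ each commute with... — well, one cannot simply split $e^Z$ multiplicatively because $Z_a,Z_b,Z_c$ need not commute, but I can argue via a triangularity/filtration argument instead: the matrix $(w_{IJ})$ with $w=e^Z$ differs from the matrix of $w=e^{Z_b}$ only by terms coming from $Z_a$ and $Z_c$, and these contribute a strictly ``raising'' perturbation in the natural filtration of $\wedge(\C^d\oplus\C^d)^\ast$ by total degree in $\theta$ minus total degree in $\bar\theta$ (or by bidegree). More precisely, I would order the basis $\{\theta^I\bar\theta^J\}$ by $|I|-|J|$: the $Z_b$-part preserves bidegree $(|I|,|J|)$ exactly (it sends $\theta^K\bar\theta^L\mapsto$ combinations with the same $|K|,|L|$, since it trades a $\theta$-slot creation against a... no — $Z_b$ raises both degrees by one). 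Let me instead use the grading by $|I|+|J|$ on the source and observe that $w_{IJ}$ with respect to the lexicographically-refined basis is block upper-triangular with the $e^{Z_b}$-contribution on the diagonal blocks and the $a,c$-contributions strictly above; invertibility of the diagonal blocks (the generic case already established) then gives invertibility of the whole.

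The main obstacle I anticipate is precisely pinning down the correct filtration so that the $a$- and $c$-contributions are genuinely strictly triangular while the $b$-contribution sits on the diagonal and is invertible — the three quadratic pieces do not commute, so the naive factorization $e^{Z}=e^{Z_a}e^{Z_b}e^{Z_c}$ fails, and one must track the interaction terms carefully (e.g.\ via the Baker--Campbell--Hausdorff expansion, which terminates here since all commutators of the $Z$'s are again even quadratics and the algebra is finite-dimensional nilpotent, or via an explicit normal-ordering argument). Once the triangular structure is correctly set up, the rest is the $d=1$ tensor-product computation plus an invertible change of variables, which is routine. An alternative, possibly slicker route avoiding the triangularity bookkeeping: identify left multiplication by $e^Z$ on $\wedge(\C^d\oplus\C^d)^\ast$ with the spinor representation of an element of the complex orthogonal group acting on the spin module; $e^Z$ lies in (a completion of) the Clifford group precisely because $Z$ is quadratic, and the invertibility of $(w_{IJ})$ is then the statement that this spinor operator is invertible — automatic, since group elements act invertibly. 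The nondegeneracy hypothesis on $(b_{\alpha\beta})$ is what guarantees the relevant quadratic form, restricted appropriately, is itself nondegenerate so that the construction does not collapse. I would likely present the elementary triangularity argument in the paper and remark on the Clifford-theoretic interpretation.
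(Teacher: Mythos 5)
Your first step (the case $a=c=0$: reduce to $b=\mathrm{id}$ by an invertible change of the odd variables, then factor $e^{Z_b}=\prod_\alpha(1+\theta^\alpha\bar\theta^\alpha)$ and tensor the $d=1$ computation) is fine. The gap is in the second step, and it is twofold. First, your stated reason for abandoning the multiplicative splitting is wrong: $Z_a$, $Z_b$, $Z_c$ are \emph{even} elements of the supercommutative Grassmann algebra, hence central, so $e^{Z}=e^{Z_a}e^{Z_b}e^{Z_c}$ holds exactly --- no BCH, no interaction terms. Second, the triangularity argument you fall back on is not just unpinned but false: the term $\frac{1}{j!\,n!\,k!}Z_a^{\,j}Z_b^{\,n}Z_c^{\,k}$ contributes to $w_{IJ}$ with $(|I|,|J|)=(2j+n,\,n+2k)$, so every term with $j=k\geq 1$ lands in the \emph{same} diagonal block (for any of the gradings $|I|-|J|$, $|I|+|J|$, or the bidegree) as $Z_b^{\,n+2j}$, and it can cancel the $b$-contribution there. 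Concretely, for $d=2$, $Z=\theta^1\theta^2+\theta^1\bar\theta^1+\theta^2\bar\theta^2+\bar\theta^1\bar\theta^2$, one gets $w_{\{12\}\{12\}}=(Z_aZ_c+\tfrac12 Z_b^2)_{\{12\}\{12\}}=1-1=0$, so the ``diagonal block'' your scheme relies on is singular even though the full matrix is invertible. So the proof as written does not close.

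The repair is exactly the observation you dismissed. Since $e^{Z_a}$ is a holomorphic even unit and $e^{Z_c}$ an antiholomorphic even unit, multiplication of $e^{Z_b}$ by them changes the coefficient matrix only by invertible factors acting separately on the $I$ and $J$ indices (both factors are unitriangular with respect to inclusion of tensor indices, with constant term $1$); combined with your step 1 this gives a complete and quite clean proof, and it is the same mechanism as Lemma~\ref{L:eab}. The paper's own argument is different: it shows that the matrix $(w_{IJ})$ is nondegenerate if and only if $A\mapsto Aw$ is onto $C^\infty(\C^{0|d})$ for $A$ ranging over holomorphic Grassmann operators, and exhibits surjectivity by checking that the conjugated operators $e^{Z_a}\bigl(p_K^I\theta^K\tfrac{\p}{\p\theta^I}\bigr)e^{-Z_a}$ send $w$ to $p(\theta,b\bar\theta)\,w$, using the invertibility of $(b_{\alpha\beta})$ and the fact that $w$ is a unit. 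Your Clifford/spinor remark points at the right picture but the identification of $(w_{IJ})$ with a spinor-representation matrix is not the ``matrix of left multiplication by $w$ applied to $1$'' (that is just the vector $w$), so as stated it does not constitute an alternative proof either.
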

\begin{proof}
Let $\Gamma$ be the space of holomorphic Grassmann operators on the functions on $\C^{0|d}$ of the form
\[
A: f_{IJ} \theta^I \bar \theta^J \mapsto A_K^I f_{IJ} \theta^K \bar\theta^J.
\]
The mapping
\[
       \Gamma \ni A \mapsto Aw = A_K^I w_{IJ} \theta^K \bar\theta^J
\]
is a linear isomorphism of $\Gamma$ onto $C^\infty(\C^{0|d})$ if and only if the matrix $(w_{IJ})$ is nondegenerate. Denote by $\{\zeta_\alpha\}$ odd holomorphic variables dual to $\{\theta^\alpha\}$.  For $I= \{\alpha_1,\ldots, \alpha_k\}$ we write $\zeta_I = \zeta_{\alpha_k} \ldots \zeta_{\alpha_1}$. Let $\Pi$ be the space of functions
\begin{equation}\label{E:funpi}
p = p(\theta,\zeta) = p_K^I \theta^K \zeta_I,
\end{equation}
where $p_K^I$ are constants. To each function (\ref{E:funpi}) we relate the differential operator
\[
     \hat p := p_K^I  {\theta^K} {\frac{\p}{\p \theta^I}} \in \Gamma.
\]
The mapping $p \mapsto \hat{p}$ is a linear isomorphism of $\Pi$ onto $\Gamma$. Given a function $p(\theta,\zeta) \in \Pi$, the operator
\[
    A= e^{\frac{1}{2}a_{\alpha\gamma}\theta^\alpha \theta^\gamma} \hat{p}\, e^{-\frac{1}{2}a_{\alpha\gamma}\theta^\alpha \theta^\gamma}
\]
maps the function $w$ to the function
\[
            Aw = e^{\frac{1}{2}a_{\alpha\gamma}\theta^\alpha \theta^\gamma} p(\theta, b \bar\theta) \exp\left(b_{\alpha\beta} \theta^\alpha \bar \theta^\beta + \frac{1}{2}c_{\beta\delta} \bar\theta^\beta \bar \theta^\delta\right) = p(\theta, b \bar\theta) w.                  
\]
In the notation $p(\theta, b \bar\theta)$ the substitution $\zeta_\alpha = b_{\alpha\beta}\bar \theta^\beta$ is  implied. The mapping $A \mapsto Aw$ is a linear isomorphism from $\Gamma$ onto $C^\infty(\C^{0|d})$, because the matrix $(b_{\alpha\beta})$ is nondegenerate. It follows that the matrix $(w_{IJ})$ is nondegenerate.
\end{proof}

Assume that the matrix $(b_{\alpha\beta})$ from Proposition \ref{P:expq} is nondegenerate and denote by $(t^{JI})$ the inverse matrix of $(w_{IJ})$.

\begin{lemma}\label{L:tdd}
  If the matrix $(b_{\alpha\beta})$ is nondegenerate, then
\[
                   t^{[d][d]} = (-1)^{\frac{d(d-1)}{2}}\det b^{-1}.
\]
\end{lemma}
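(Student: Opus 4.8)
The plan is to extract the single matrix entry $t^{[d][d]}$ by evaluating both sides of the identity $w_{IJ} t^{JK} = \delta_I^K$ (or the dual version $t^{KJ} w_{JI} = \delta_I^K$) in a way that isolates the top-degree component. The key observation is that $w = e^Z$ with $Z$ as in Proposition~\ref{P:expq}, and among all monomials $\theta^I\bar\theta^J$ only those with $|I|=|J|$ can occur in $e^Z$, since $Z$ is even and each of its three terms preserves the difference $|I|-|J|$ modulo nothing stronger — more precisely, the $\frac12 a\theta\theta$ term raises $|I|$ by $2$, the $b\theta\bar\theta$ term raises each by $1$, and the $\frac12 c\bar\theta\bar\theta$ term raises $|J|$ by $2$, so a general monomial in $e^Z$ has $|I|-|J| \in 2\Z$; but we will not even need the full structure. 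What I really want is $w_{\emptyset\emptyset}$ and the top coefficient of $w$ in a suitable reduced expansion.

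Concretely, I would proceed as follows. First, $w_{\emptyset\emptyset} = 1$ since $Z$ has no constant term (every term of $Z$ is a product of at least two Grassmann generators), so the $\theta^\emptyset\bar\theta^\emptyset$ coefficient of $e^Z$ is $1$. Second, compute the top coefficient $w_{[d][d]}$, i.e.\ the coefficient of $\theta^{[d]}\bar\theta^{[d]} = \theta^1\cdots\theta^d\bar\theta^1\cdots\bar\theta^d$ in $e^Z$: expanding $e^Z = \sum \frac{1}{n!} Z^n$, the only way to produce a term with all $d$ of the $\theta$'s and all $d$ of the $\bar\theta$'s is to take exactly $d$ copies of the mixed term $b_{\alpha\beta}\theta^\alpha\bar\theta^\beta$ (any use of $a\theta\theta$ or $c\bar\theta\bar\theta$ would force a repeated $\theta$ or $\bar\theta$ among the remaining factors and kill the term, since we must still reach all $d$ indices on each side). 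So $w_{[d][d]}$ is the coefficient of $\theta^{[d]}\bar\theta^{[d]}$ in $\frac{1}{d!}\left(b_{\alpha\beta}\theta^\alpha\bar\theta^\beta\right)^d$, and a standard Grassmann computation (reordering $\theta^{\alpha_1}\bar\theta^{\beta_1}\cdots\theta^{\alpha_d}\bar\theta^{\beta_d}$ into $\theta^{[d]}\bar\theta^{[d]}$ picks up a sign, and summing over permutations produces a determinant) gives $w_{[d][d]} = \pm\det b$; the sign bookkeeping, combining the permutation sign with the fixed sign $\epsilon$ incurred by shuffling the $d$ pairs $\theta^i\bar\theta^i$ into the segregated form $\theta^{[d]}\bar\theta^{[d]}$, should yield exactly $(-1)^{d(d-1)/2}\det b$.

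Third, I would read off $t^{[d][d]}$ from the matrix inverse. Order the tensor indices $I \subset [d]$ by nondecreasing $|I|$ (so $\emptyset$ is first and $[d]$ is last). With respect to this ordering the matrix $(w_{IJ})$ is block upper-triangular up to the parity-preservation just noted — in particular its $(\emptyset,[d])$-type structure means $w_{[d]I} = 0$ unless $I = [d]$ (the top row has a single nonzero entry $w_{[d][d]}$) and $w_{I\emptyset} = 0$ unless $I=\emptyset$. Hence from $t^{[d]J} w_{JK} = \delta^{[d]}_K$ applied with $K = \emptyset$, only $J=\emptyset$ contributes on the left, giving $t^{[d]\emptyset} w_{\emptyset\emptyset} = 0$, and working down the triangular structure — or more directly, from $w_{[d]J} t^{J[d]} = \delta^{[d]}_{[d]} = 1$, where only $J=[d]$ survives — we get $w_{[d][d]} \, t^{[d][d]} = 1$, so
\[
  t^{[d][d]} = \left(w_{[d][d]}\right)^{-1} = \left((-1)^{\frac{d(d-1)}{2}}\det b\right)^{-1} = (-1)^{\frac{d(d-1)}{2}}\det b^{-1},
\]
using $(-1)^{d(d-1)/2} = (-1)^{-d(d-1)/2}$. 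This is the claimed formula.

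The main obstacle is the sign bookkeeping in step two: correctly tracking the sign that arises when the alternating product $\prod_{j=1}^d \theta^{\alpha_j}\bar\theta^{\beta_j}$ is brought into the normal form $\theta^{[d]}\bar\theta^{[d]}$, and checking that after summing over the permutation $(\alpha_j)$ against $(\beta_j)$ the surviving sign is uniformly $(-1)^{d(d-1)/2}$ (independent of the permutation, as it must be for a determinant to emerge). One clean way to sidestep ad hoc index-chasing is to invoke Proposition~\ref{P:expq}'s operator-theoretic proof: there the conjugated operator realizing $A\mapsto Aw$ is shown to be $p \mapsto p(\theta, b\bar\theta)$ composed with conjugation by $e^{\frac12 a\theta\theta}$, neither of which affects the top coefficient in the $\Pi$-description beyond the substitution $\zeta_\alpha = b_{\alpha\beta}\bar\theta^\beta$; chasing the basis element $\theta^{[d]}\zeta_{[d]} \leftrightarrow \hat p = \theta^{[d]}\partial/\partial\theta^{[d]}$ through this isomorphism and applying it to $w$ directly produces $\theta^{[d]}(b\bar\theta)^{[d]} w_{\emptyset\emptyset}$, from which $w_{[d][d]} = \det b$ up to the fixed reordering sign $(-1)^{d(d-1)/2}$ falls out with the permutation sum already packaged as a determinant. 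I would use that route to make the sign transparent.
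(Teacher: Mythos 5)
Your overall strategy---isolating $t^{[d][d]}$ from the matrix identity $w_{IJ}t^{JK}=\delta_I^K$---is reasonable, but two of your concrete claims are false, and the argument does not survive without them. First, it is not true that only the $d$-fold product of the mixed term $b_{\alpha\beta}\theta^\alpha\bar\theta^\beta$ contributes to $w_{[d][d]}$: a product of $p$ factors of $\tfrac12 a\theta\theta$, $q$ factors of $b\theta\bar\theta$, and $r$ factors of $\tfrac12 c\bar\theta\bar\theta$ has $\theta$-degree $2p+q$ and $\bar\theta$-degree $q+2r$, so every choice with $p=r$ and $q=d-2p$ hits the top component. Taking $a$ and $c$ antisymmetric (as one may), already for $d=2$ one finds $w_{[2][2]}=-\det b+a_{12}c_{12}$, not $(-1)^{d(d-1)/2}\det b$. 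Second, the triangularity you invoke fails for the same reason: $w_{[d]J}$ need not vanish for $J\neq[d]$ (for $d=2$, $w_{[2]\emptyset}=a_{12}$), so the equation $w_{[d]J}t^{J[d]}=1$ does not collapse to $w_{[d][d]}t^{[d][d]}=1$, and $t^{[d][d]}\neq (w_{[d][d]})^{-1}$ in general. Concretely, with $d=2$, $b$ the identity matrix, and $a_{12},c_{12}$ arbitrary, one checks $t^{[2][2]}=-1$ while $(w_{[2][2]})^{-1}=(-1+a_{12}c_{12})^{-1}$. The two errors happen to compensate, so your final formula is the correct one, but neither step is valid, and the ``main obstacle'' is not the sign bookkeeping you worried about.

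The paper's proof is built precisely to evade this difficulty. Starting from $\bar\theta^L=t^{LK}\delta_K w$ and formula (\ref{E:delk}), it rewrites $\delta_K w$ using the conjugated derivative $\bigl(e^{-Z}\frac{\p}{\p \theta^K}e^Z\bigr)1$ and then extracts only the component of top degree $d$ in $\theta,\bar\theta$ after setting $\theta=0$: the factor $e^{-\frac12 c_{\beta\delta}\bar\theta^\beta\bar\theta^\delta}$ acts trivially on $\bar\theta^{[d]}$, and since $Z$ is quadratic the degree-$d$ part of the conjugated derivative is nonzero only for $K=[d]$, where it equals $\frac{\p Z}{\p\theta^d}\cdots\frac{\p Z}{\p\theta^1}$, whose restriction to $\theta=0$ is $b_{d\beta_d}\bar\theta^{\beta_d}\cdots b_{1\beta_1}\bar\theta^{\beta_1}=(-1)^{d(d-1)/2}\det b\,\bar\theta^{[d]}$. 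This is exactly how the $a$- and $c$-contributions that spoil your computation get killed. If you wish to keep your matrix-inverse framing, you would need to establish a Berezinian-type cancellation showing that the off-diagonal corrections to $(w_{IJ})$ do not affect the $([d],[d])$ entry of the inverse; that cancellation is real but not automatic, and you have not supplied it.
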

\begin{proof}
   We have from (\ref{E:delk}),
\[
   \bar\theta^L =  t^{LK} w_{KJ} \bar \theta^J = t^{LK}\delta_K w =  \left\{ e^Z t^{LK} \left(e^{-Z} \frac{\p}{\p \theta^K} e^Z\right)1 \right\} \Bigg|_{\theta=0},
\]
whence
\[
     t^{LK}  \left\{ \left(e^{-Z}\frac{\p}{\p \theta^K} e^Z \right)1\right\}\Bigg|_{\theta=0} = \bar \theta^L e^{- \frac{1}{2}c_{\beta\delta}\bar\theta^\beta\bar\theta^\delta}.
\] 
For $L=[d]$,
\begin{equation}\label{E:ld}
      t^{[d]K}  \left\{ \left(e^{-Z}\frac{\p}{\p \theta^K} e^Z \right)1 \right\}\Bigg|_{\theta=0} = \bar \theta^{[d]}.
\end{equation}
For $K = \{\alpha_1, \ldots, \alpha_n\}$,
\begin{equation}\label{E:thpr}
    \left ( e^{-Z}\frac{\p}{\p \theta^K} e^Z \right) 1= \left(\frac{\p}{\p \theta^{\alpha_n}} +  \frac{\p Z}{\p \theta^{\alpha_n}}\right) \ldots \left(\frac{\p}{\p \theta^{\alpha_1}} +  \frac{\p Z}{\p \theta^{\alpha_1}}\right)1.
\end{equation}
Since $Z$ is quadratic in the variables $\theta, \bar\theta$, the component of (\ref{E:thpr}) of degree $d$ in these variables is 
\begin{equation}\label{E:pzpz}
    \frac{\p Z}{\p \theta^d} \ldots \frac{\p Z}{\p \theta^1}
\end{equation}
if $n =d$ and zero otherwise. Formulas (\ref{E:ld}) and (\ref{E:pzpz}) imply that
\[
      t^{[d][d]} \left( b_{d \beta_d} \bar \theta^{\beta_d} \ldots b_{1 \beta_1} \bar \theta^{\beta_1} \right) = \bar \theta^{[d]},
\]
whence the lemma follows.
\end{proof}

We call an even nilpotent formal function $Y = \nu^{-1}Y_{-1} + Y_0 + \nu Y_1 + \ldots$ on $U \times \C^{0|d}$ a nondegenerate nilpotent potential if the matrix
\begin{equation}\label{E:ytheta}
     \left(\overrightarrow{\frac{\p}{\p \theta^\alpha}} Y_{-1} \overleftarrow{\frac{\p}{\p \bar \theta^\beta}}   \right)                
\end{equation}
is nondegenerate at every point of $U$.  The component of $Y_{-1}$ of degree two in the variables $\theta,\bar\theta$ can be written as
\begin{equation}\label{E:deg2}
                    \frac{1}{2}a_{\alpha\gamma}\theta^\alpha \theta^\gamma + b_{\alpha\beta} \theta^\alpha \bar \theta^\beta + \frac{1}{2} c_{\beta\delta} \bar\theta^\beta \bar \theta^\delta,
\end{equation}
where $a_{\alpha \gamma}, b_{\alpha\beta}, c_{\beta\delta} \in C^\infty(U)$. The matrix (\ref{E:ytheta}) is nondegenerate if and only if the matrix $b = (b_{\alpha\beta})$ is nondegenerate.
\begin{theorem}
Given an open set $U \subset \C^m$ and a nondegenerate nilpotent potential $Y$ on $U \times \C^{0|d}$, the formal function $e^Y = u_{PQ}\theta^P \bar\theta^Q$ on $U \times \C^{0|d}$ is admissible for any (possibly degenerate) star product with separation of variables $\star$ on $U$. The  leading term of the entry $v^{[d][d]}$ of the matrix $(v^{QP})$ inverse to $(u_{PQ})$ over the algebra $(C^\infty(U)[\nu^{-1},\nu]],\star)$ is 
\begin{equation}\label{E:lead}
                      (-1)^{\frac{d(d-1)}{2}}\det \left(b^{-1}\right)  \, \nu^d,
\end{equation}
where $b = (b_{\alpha\beta})$ is as in (\ref{E:deg2}).
\end{theorem}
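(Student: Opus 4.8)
The plan is to reduce the question to a statement about the single Grassmann-variable part of the problem, leveraging Lemma~\ref{L:eab} to discard the ``linear'' part of $Y$ and Proposition~\ref{P:expq} together with Lemma~\ref{L:tdd} to handle the ``quadratic'' part. First I would write $Y = L + Q + N$, where $L$ is the component of $Y$ linear in the Grassmann variables (which is the sum of a holomorphic and an antiholomorphic nilpotent formal function), $Q$ is the component quadratic in $\theta,\bar\theta$ given by \eqref{E:deg2}, and $N$ collects all higher-order terms. Then $e^Y = e^{Y'}\cdot(\text{holomorphic factor})\cdot(\text{antiholomorphic factor})$ after absorbing $L$, so by Lemma~\ref{L:eab} it suffices to treat $u = e^{Y'}$ with $Y'$ having no linear part; equivalently we may assume $Y = Q + N$ with $Q$ purely quadratic and $N$ of degree $\geq 3$ in $\theta,\bar\theta$ (hence nilpotent of higher order) and also $Y_{-1}$, $Y_0$, etc.\ all allowed to carry $\nu$-coefficients.

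Next I would establish admissibility, i.e.\ invertibility of the matrix $(u_{PQ})$ over $(C^\infty(U)[\nu^{-1},\nu]],\star)$. The key observation is that $(u_{PQ})$ is an upper-triangular-plus-invertible structure once we filter by the total Grassmann degree $|P|+|Q|$: the leading (degree-$(|P|,|Q|)$-diagonal) block is governed by the quadratic part $Q$ and, after multiplying by an appropriate power of $\nu$, becomes exactly the constant-coefficient matrix $(w_{PQ})$ of Proposition~\ref{P:expq} with $b=(b_{\alpha\beta})$ nondegenerate (by the hypothesis that \eqref{E:ytheta} is nondegenerate). Since the classical leading term $w_{\emptyset\emptyset}=1$, the matrix $(u_{PQ})$ is a unit plus a nilpotent correction in the $\star$-algebra, and the geometric series for the inverse converges $\nu$-adically (using that the entries of $(u_{PQ}) - (w_{PQ})$ and of $(w_{PQ})-\delta$ are of strictly positive filtration). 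This gives existence of $(v^{QP})$ with entries in $C^\infty(U)[\nu^{-1},\nu]]$ and shows $u$ is admissible for \emph{any} star product with separation of variables, since only the $\nu$-adic topology and the unit were used. The main obstacle here is bookkeeping the two gradings simultaneously (the $\nu$-adic one and the Grassmann-degree one) so that the correction term is genuinely topologically nilpotent; one must be careful that the $\nu$ powers do not run the wrong way, which is exactly what the nondegeneracy of $b$ prevents.

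Finally, for the leading term of $v^{[d][d]}$ I would argue as follows. The entry $v^{[d][d]}$ is $\star$-dual to the ``top'' entry $u_{[d][d]}$, and in the filtration by Grassmann degree the relevant contribution to $v^{[d][d]}$ comes entirely from the purely quadratic part $Q$ of $Y$: the higher terms $N$ only contribute to strictly lower-order corrections in $\nu$. At the level of the constant-coefficient model of Proposition~\ref{P:expq}, $v^{[d][d]}$ reduces modulo higher $\nu$-order to $\nu^d$ times $t^{[d][d]}$, the top-corner entry of the inverse matrix $(t^{JI})$ of $(w_{IJ})$; and Lemma~\ref{L:tdd} computes this to be $(-1)^{d(d-1)/2}\det b^{-1}$. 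Tracking the $\nu$-powers: replacing $\theta^\alpha\bar\theta^\beta$-type monomials by their $\nu$-rescaled versions coming from $Y_{-1}=\nu^{-1}(\dots)$ turns the classical $t^{[d][d]}$ into $\nu^d t^{[d][d]}$, and since $\star$-products differ from pointwise products only in higher $\nu$-order, the leading term of $v^{[d][d]}$ over the $\star$-algebra agrees with the commutative computation. This yields \eqref{E:lead}. The subtle point in this last step is verifying that $N$ and the $\star$-product corrections truly contribute only to order $\nu^{d+1}$ and higher in $v^{[d][d]}$, which follows because each $\star$-commutator or each use of a degree-$\geq 3$ monomial in $Y$ raises the $\nu$-order by at least one while the Grassmann-degree constraint $|[d]|+|[d]|=2d$ pins down how many quadratic factors must be used.
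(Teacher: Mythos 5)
Your proposal is essentially the paper's own proof: the paper implements your ``filtration by total Grassmann degree'' cleanly by substituting $\eta^\alpha=\nu^{-1/2}\theta^\alpha$, $\bar\eta^\beta=\nu^{-1/2}\bar\theta^\beta$, which turns $e^Y$ into a matrix $(\tilde u_{PQ})=\nu^{(|P|+|Q|)/2}(u_{PQ})$ over $C^\infty(U)[[\nu]]$ whose zeroth-order term is exactly the matrix $(w_{PQ})$ of Proposition~\ref{P:expq}; invertibility over the $\star$-algebra then follows from pointwise nondegeneracy of $(w_{PQ})$, and the leading term of $v^{[d][d]}$ is $\nu^d t^{[d][d]}$ by Lemma~\ref{L:tdd}, as you say. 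The one caveat concerns your preliminary decomposition $Y=L+Q+N$: since $Y$ is even and its coefficients lie in $C^\infty(U)$, the Grassmann-linear part $L$ vanishes identically, so the appeal to Lemma~\ref{L:eab} is vacuous --- and it is just as well, because a nonzero $L$ would in general not split into a holomorphic plus an antiholomorphic function, and Lemma~\ref{L:eab} preserves the associated product but not the matrix $(v^{QP})$ whose top entry the theorem computes.
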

\begin{proof}
The function $Y$ can be written as 
\[
     Y(\theta,\bar\theta) = \sum_{r = -1}^\infty \nu^r\,  Y_{r, PQ}\,  \theta^P \bar \theta^Q,
\]
where $Y_{r, PQ} \in C^\infty(U)$. Introduce formal odd variables
\[
     \eta^\alpha := \frac{1}{\sqrt{\nu}} \, \theta^\alpha \mbox{ and } \bar \eta^\beta := \frac{1}{\sqrt{\nu}}\,  \bar \theta^\beta
\]
and define a function
\[
     \tilde Y (\eta, \bar \eta) := Y(\sqrt{\nu}\, \eta, \sqrt{\nu}\,  \bar \eta) = \sum_{r = -1}^\infty \sum_{P,Q} \nu^{r+\frac{1}{2}(|P|+|Q|)}\, Y_{r, PQ}\, \eta^P \bar\eta^Q.
\]
Since $Y$ is even and nilpotent, $\tilde Y$ is a formal series in nonnegative integer powers of $\nu$, $\tilde Y = \tilde Y_0 + \nu \tilde Y_1 + \ldots$, and it follows from (\ref{E:deg2}) that
\[
     \tilde Y_0 = \frac{1}{2}a_{\alpha\gamma}\eta^\alpha \eta^\gamma + b_{\alpha\beta} \eta^\alpha \bar \eta^\beta + \frac{1}{2} c_{\beta\delta} \bar\eta^\beta \bar \eta^\delta.
\]
Set
\[
          \tilde u_{PQ} := \nu^{\frac{1}{2}(|P|+|Q|)} u_{PQ}.    
\]
Then
\[
        e^{\tilde Y} = \tilde u_{PQ} \eta^P \bar \eta^Q
\]
and therefore
\[
\tilde u_{PQ} \in C^\infty(U)[[\nu]].
\]
Denote by $w_{PQ}$ the coefficient at the zeroth power of $\nu$ of $\tilde u_{PQ}$, so that
\[
      e^{\tilde Y_0} = w_{PQ} \eta^P \bar\eta^Q.      
\] 
Let $\star$ be any  star product with separation of variables on $U$. The matrix $(u_{PQ})$ is invertible over the algebra $(C^\infty(U)[\nu^{-1},\nu]], \star)$ if and only if the matrix $(\tilde u_{PQ})$ is invertible over that algebra. 
The matrix  $(\tilde u_{PQ})$ is invertible over the algebra $(C^\infty(U)[[\nu]], \star)$ if and only if the matrix $(w_{PQ})$ is nondegenerate at every point of $U$, which is the case according to Proposition~ \ref{P:expq}. Therefore, the matrix $(u_{PQ})$ has an inverse, $(v^{QP})$, over the algebra $(C^\infty(U)[\nu^{-1},\nu]], \star)$. Denote by $(\tilde v^{QP})$ the inverse matrix of $(\tilde u_{PQ})$ over $(C^\infty(U)[[\nu]], \star)$. Clearly,
\[
                     v^{QP} = \nu^{\frac{1}{2}(|P|+|Q|)} \tilde v^{QP}.
\]
It follows from Lemma \ref{L:tdd} that the leading term of $v^{[d][d]}$ is (\ref{E:lead}), which concludes the proof of the theorem.
\end{proof}
\begin{corollary}\label{C:cantrden}
Let $\star$ be a nondegenerate star product with separation of variables on an open subset $U \subset \C^m$, $Y$ be a nondegenerate nilpotent potential  on $U \times \C^{0|d}$, and $\rho = \rho_{IJ} \theta^I \bar \theta^J$ be the canonical supertrace density function of the product $\ast$ associated with the pair $(\star, e^Y)$ as in ~(\ref{E:murho}). Then the  leading term of the component $\rho_{\emptyset\emptyset}$ of $\rho$ of degree zero with respect to the variables $\theta,\bar\theta$ is
\[
       \nu^{d-m} \psi,                    
\]
where $\psi \in C^\infty(U)$ is nonvavishing on $U$.
\end{corollary}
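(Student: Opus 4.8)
The plan is to combine the formula (\ref{E:taudens}) for the components of the density function $\tau$ with the relation $u^{-1}\rho\, dzd\bar z = \tau\mu_\star$ and the explicit description of $\mu_\star$ from (\ref{E:trdens}), then extract the $\theta$-degree-zero component. Unwinding $u^{-1}\rho\,dzd\bar z = \tau\mu_\star$: writing $\mu_\star = g_\star\, dzd\bar z$ with $g_\star = \frac{1}{m!\,\nu^m}(\omega_{-1})^m e^\varkappa$ the canonical trace density of $\star$ (so $g_\star \in C^\infty(U)[\nu^{-1},\nu]]$ has leading term $\nu^{-m}$ times a nonvanishing function), we get $\rho = u \cdot \tau \cdot g_\star$. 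Since $u = e^Y$ has $u_{\emptyset\emptyset}=1$ and the remaining $u_{PQ}$ are built from the nilpotent part of $Y$, the degree-zero component of $\rho = e^Y\tau g_\star$ in the variables $\theta,\bar\theta$ is $\rho_{\emptyset\emptyset} = \tau_{\emptyset\emptyset}\, g_\star$ plus contributions of the form $(\text{product of }u_{PQ}\text{ with }|P|,|Q|>0)\cdot(\text{higher }\tau\text{-components})$; but all such terms involve $u_{PQ}$ with $|P|=|Q|>0$, hence at least one factor $\theta$, so in fact $\rho_{\emptyset\emptyset} = u_{\emptyset\emptyset}\,\tau_{\emptyset\emptyset}\, g_\star = \tau_{\emptyset\emptyset}\, g_\star$.

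Next I would compute $\tau_{\emptyset\emptyset}$ from (\ref{E:taudens}). Taking $K=Q=\emptyset$ there, we have $K'=Q'=[d]$, so $\varepsilon(K')=\varepsilon([d]) = \frac{d(d+1)}{2} - \frac{d(d-1)}{2}\cdot\ldots$ — more simply, $\lambda([d],[d]) = |[d]'||[d]| + \varepsilon([d]) + \varepsilon([d]) = 0 + 2\varepsilon([d])$, which is even, and $|K'| = |[d]| = d$, so
\[
      \tau_{\emptyset\emptyset} = (-1)^{d}\, \I_\star^{-1} v^{[d][d]}.
\]
By the previous theorem the leading term of $v^{[d][d]}$ is $(-1)^{\frac{d(d-1)}{2}}\det(b^{-1})\,\nu^d$, and since $\I_\star = 1 + \nu\I_1 + \ldots$ is a natural formal differential operator starting with the identity, $\I_\star^{-1}$ also starts with the identity and does not lower the $\nu$-order; hence $\I_\star^{-1} v^{[d][d]}$ has the same leading term $(-1)^{\frac{d(d-1)}{2}}\det(b^{-1})\,\nu^d$. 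Therefore $\tau_{\emptyset\emptyset}$ has leading term $(-1)^{d + \frac{d(d-1)}{2}}\det(b^{-1})\,\nu^d$, and multiplying by $g_\star$ whose leading term is $\nu^{-m}$ times a nowhere-vanishing function on $U$ gives that $\rho_{\emptyset\emptyset} = \tau_{\emptyset\emptyset}\, g_\star$ has leading term $\nu^{d-m}\psi$ with $\psi\in C^\infty(U)$ nowhere vanishing (being a nonzero constant multiple of $\det(b^{-1})$ — which is nonvanishing because $Y$ is a nondegenerate nilpotent potential — times the leading coefficient of $g_\star$). This is exactly the claimed form.

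The main obstacle is bookkeeping rather than anything conceptual: one must verify carefully that no other terms in the expansion of $\rho = e^Y\tau g_\star$ contribute to the $\theta,\bar\theta$-degree-zero part and that the $\nu$-orders add as claimed — in particular that $\I_\star^{-1}$ preserves the leading $\nu$-power of $v^{[d][d]}$ (this follows since $\I_\star$ is natural, so $\I_\star = 1 + O(\nu)$ acting on formal Laurent series preserves the order of the principal part) and that $g_\star$ genuinely has order exactly $-m$ with nonvanishing leading coefficient $\frac{1}{m!}(\omega_{-1})^m/(dzd\bar z)$, which is immediate from (\ref{E:trdens}) and the nondegeneracy of $\omega_{-1}$. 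Once these are in hand, the sign and the identification of $\psi$ up to a nonzero scalar are routine.
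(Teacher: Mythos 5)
Your proposal is correct and follows essentially the same route as the paper, whose proof is a one-line citation of formulas (\ref{E:trdens}) and (\ref{E:taudens}): you extract $\rho_{\emptyset\emptyset}=\tau_{\emptyset\emptyset}\,g_\star$ from $\rho\,dzd\bar z = u\tau\mu_\star$, evaluate (\ref{E:taudens}) at $K=Q=\emptyset$ to get $\tau_{\emptyset\emptyset}=(-1)^d\I_\star^{-1}v^{[d][d]}$, and combine the $\nu^d$ leading term of $v^{[d][d]}$ from the preceding theorem with the $\nu^{-m}$ nonvanishing leading term of $\mu_\star$. The details you supply (that $\I_\star^{-1}=1+O(\nu)$ preserves the leading term, and that no higher Grassmann components of $u$ and $\tau$ contribute to the degree-zero part) are exactly the bookkeeping the paper leaves implicit.
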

\begin{proof}
The corollary follows from formulas (\ref{E:trdens}) and (\ref{E:taudens}).
\end{proof}

In \cite{CMP3} it was proved that a possibly degenerate star product with separation of variables on a complex manifold is natural. Below we give a more elementary proof of this fact.
\begin{proposition}\label{P:nat}
Any star product with separation of variables on a complex manifold is natural. 
\end{proposition}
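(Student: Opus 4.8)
The plan is to localize the statement and then prove the required order bounds by induction on the $\nu$-degree, using the characterizing identities (\ref{E:classleft}) and (\ref{E:classright}). A formal differential operator $A = A_0 + \nu A_1 + \cdots$ is natural precisely when $A_0$ is a multiplication operator and $\mathrm{ord}\, A_k \le k$ for every $k$, and both conditions are local; hence $\star$ is natural if and only if, for every coordinate chart $U$ and every $f \in C^\infty(U)$, the operators $L_f$ and $R_f$ of $\star|_U$ are natural. Assume first that the Poisson structure induced by $\star$ is nondegenerate, so that $M$ carries the pseudo-K\"ahler form $\omega_{-1}$ and $\star$ has a classifying form $\omega$. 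Fix a contractible chart $U$ with a formal potential $\Phi = \nu^{-1}\Phi_{-1} + \Phi_0 + \nu\Phi_1 + \cdots$ of $\omega$, and $f \in C^\infty(U)$; it suffices to show $L_f$ is natural, the assertion for $R_f$ being symmetric (use (\ref{E:classleft}) and interchange $z$ with $\bar z$). Writing $L_f = f + \nu L_{f,1} + \nu^2 L_{f,2} + \cdots$ with $L_{f,r}g = C_r(f,g)$, the separation of variables property makes each $L_{f,r}$ a differential operator in the holomorphic derivatives $\p/\p z^k$ alone, and $L_{f,0} = f$ a multiplication operator, so naturalness of $L_f$ amounts exactly to the bounds $\mathrm{ord}\, L_{f,r} \le r$.

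I would establish these bounds by induction on $r$, simultaneously for all $f$. Since $L_f$ and $R_g$ commute, $[\,L_f, R_{\p\Phi/\p\bar z^l}\,] = 0$ for each $l$; expanding in powers of $\nu$, with (\ref{E:classright}) written as
\[
   R_{\p\Phi/\p\bar z^l} = \nu^{-1}\frac{\p\Phi_{-1}}{\p\bar z^l} + \left(\frac{\p}{\p\bar z^l} + \frac{\p\Phi_0}{\p\bar z^l}\right) + \nu\frac{\p\Phi_1}{\p\bar z^l} + \cdots,
\]
the coefficient of $\nu^{r-1}$ yields
\[
   \left[\,L_{f,r},\ \frac{\p\Phi_{-1}}{\p\bar z^l}\,\right] = -\sum_{\substack{i+j=r-1\\ j\ge 0}} \left[\,L_{f,i},\ \bigl(R_{\p\Phi/\p\bar z^l}\bigr)_j\,\right],
\]
where $\bigl(R_{\p\Phi/\p\bar z^l}\bigr)_j$ denotes the coefficient of $\nu^j$ in $R_{\p\Phi/\p\bar z^l}$. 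By the inductive hypothesis $\mathrm{ord}\, L_{f,i} \le i \le r-1$ for each summand; bracketing an operator in holomorphic derivatives against a function strictly lowers its order, and bracketing it against $\p/\p\bar z^l$ merely differentiates its coefficients, so the right-hand side, hence $[\,L_{f,r},\ \p\Phi_{-1}/\p\bar z^l\,]$, has order at most $r-1$ for every $l$.

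To conclude, let $N = \mathrm{ord}\, L_{f,r}$ and let $\sum_{|B|=N} a_B\,\p/\p z^B$ be the principal part of $L_{f,r}$. Because $\p\bigl(\p\Phi_{-1}/\p\bar z^l\bigr)/\p z^k = g_{kl}$ is the pseudo-K\"ahler metric, the order-$(N-1)$ part of $[\,L_{f,r},\ \p\Phi_{-1}/\p\bar z^l\,]$ is $\sum_{|C|=N-1}\bigl(\sum_k g_{kl}\, w_{k,C}\, a_{C\cup\{k\}}\bigr)\,\p/\p z^C$ with strictly positive combinatorial weights $w_{k,C}$. If $N > r$ this principal part must vanish for every $l$; invertibility of $(g_{kl})$ then forces $a_B = 0$ whenever $|B| = N$, contradicting $N = \mathrm{ord}\, L_{f,r}$. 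Therefore $\mathrm{ord}\, L_{f,r} \le r$, which closes the induction and proves $L_f$ natural; running the same argument with (\ref{E:classleft}) and the invertible matrix $\p^2\Phi_{-1}/\p z^k\p\bar z^l$ proves $R_f$ natural.

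The crux is this last step: naturalness of $L_f$ is detected by commuting with a right-multiplication operator whose principal (order $\nu^{-1}$) part is multiplication by a function with nondegenerate mixed Hessian, and it is precisely the pseudo-K\"ahler potential $\Phi_{-1}$ that produces such a function. This disposes of the case of a nondegenerate induced Poisson structure. When $\star$ is a (possibly degenerate) star product with separation of variables one no longer has an invertible metric to cancel the would-be principal part, and the argument must be supplemented --- e.g.\ by passing to a local normal form of a type $(1,1)$ Poisson tensor, or by invoking \cite{CMP3}; this is the one genuinely delicate point.
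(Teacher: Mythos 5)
Your argument in the nondegenerate case is correct, but it does not prove the proposition as stated, and the case you set aside at the end is not an optional refinement here: the sentence immediately preceding Proposition~\ref{P:nat} announces an elementary proof that a \emph{possibly degenerate} star product with separation of variables is natural, and the paper later relies on exactly that generality (e.g.\ in the discussion before Proposition~\ref{P:lnat}, where $L^\star_h$ must be natural for a star product $\star$ on a Poisson manifold $(U,g^{lk})$ with no nondegeneracy assumption, and in Corollary~\ref{C:nat}). So the concession in your last paragraph --- that for degenerate $\star$ the argument ``must be supplemented,'' perhaps by a normal form or by citing \cite{CMP3} --- is a genuine gap, since there is then no classifying form, no potential $\Phi$, and no invertible metric $(g_{kl})$ to run your cancellation step.

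The gap closes, and the whole proof simplifies, if you commute against holomorphic functions themselves rather than against $R_{\p\Phi/\p\bar z^l}$; this is the paper's route. Separation of variables gives $L_a=a$ for every local holomorphic $a$, so the algebra $\L(U)=\{L_f : f\in C^\infty(U)[[\nu]]\}$ contains these multiplication operators and is closed under $\nu^{-1}[\cdot,\cdot]$, because $\nu^{-1}(f\star g-g\star f)\in C^\infty(U)[[\nu]]$. Inducting on $n$ over all $A=A_0+\nu A_1+\cdots\in\L(U)$ simultaneously, the coefficient of $\nu^{n-1}$ in $\nu^{-1}[A,a]\in\L(U)$ is $[A_n,a]$, hence has order at most $n-1$ for every holomorphic $a$. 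Taking $a=z^k$ and using that $A_n$ contains no antiholomorphic derivatives: if $\mathrm{ord}\,A_n=N>n$ with principal symbol $\sigma$, then the order-$(N-1)$ principal symbol of $[A_n,z^k]$ is $\p\sigma/\p\xi_k$, which must vanish for all $k$ since $N-1>n-1$; as $\sigma$ is homogeneous of degree $N\geq 1$ in $\xi$, this forces $\sigma=0$, a contradiction. This is precisely your final computation with $g_{kl}$ replaced by $\delta^k_l$, valid with no hypothesis on the Poisson tensor; your detour through $\Phi_{-1}$ and the invertibility of $(g_{kl})$ is sound where it applies but is strictly less general than what the proposition requires.
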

\begin{proof}
Let $\star$ be a star product with separation of variables on a complex manifold $M$ and $U \subset M$ be a holomorphic coordinate chart. Set $\L(U) := \{L_f | f \in C^\infty(U)[[\nu]]\}$. Given $f ,g \in C^\infty(U)[[\nu]]$, we have that
\[
      \nu^{-1}(f \star g - g \star f) \in C^\infty(U)[[\nu]].
\]
Therefore, for $A, B \in \L(U)$ we have that $ \nu^{-1}[A,B] \in \L(U)$. We will prove by induction on $n$ that for every $A = A_0 + \nu A_1 + \ldots \in \L(U)$ the order of $A_r$ for $r \leq n$ is not greater than $r$. This statement is true for $n=0$, because $A_0$ is a pointwise multiplication operator. The operators from $\L(U)$ do not contain antiholomorphic derivatives. Let $a$ be any holomorphic function on $U$. Then $L_a = a \in \L(U)$. In particular, for every  $A = A_0 + \nu A_1 + \ldots \in \L(U)$ we have that
\[
        \nu^{-1}[A,a] = \sum_{r = 1}^\infty \nu^{r-1} [A_r,a] \in \L(U).
\]
If we assume that the induction assumption holds for $n-1$, then the order of the operator $[A_n,a]$ is not greater than $n-1$ for every holomorphic function $a$. Therefore, the order of $A_n$ is not greater than $n$, which concludes the proof that for any $f \in C^\infty(M)[[\nu]]$ the operator $L_f$ is natural. The proof that $R_f$ is natural is similar.
\end{proof}

Let $U$  be an open subset of $\C^m$, $g^{lk}$ be a Poisson tensor of type $(1,1)$ on $U$, and $\{\xi_k, \bar \xi_l\}$ be the fiber coordinates on the cotangent bundle $T^\ast U$ dual to $\{z^k, \bar z^l\}$. Given a differential operator $D$ of order not greater than $r$ on $U$, let  $Symb_r(D) \in C^\infty(U)[\xi, \bar\xi]$ denote the principal symbol of order $r$ of the operator $D$  (which is  homogeneous in the variables $\xi, \bar \xi$ of degree $r$). If $A = A_0 + \nu A_1 + \ldots$ is a natural formal differential operator on $U$, set
\[
                 Symb(A) := \sum_{r=0}^\infty Symb_r(A_r) \in C^\infty(U)[[\xi, \bar\xi]].
\]
Let $\star$ be a star product with separation of variables on the Poisson manifold $(U, g^{lk})$. Given a function $h \in C^\infty(U)$, the operator $ L^\star_h$ of left $\star$-multiplication by $h$ is natural.
Set
\[
       Sh := Symb( L^\star_h).
\]
The mapping $C^\infty(U) \ni h \mapsto Sh$ is the source mapping of the formal symplectic groupoid of the star product $\star$ (see \cite{CMP3}).
It was proved in \cite{CMP3} that 
\[
                              (Sh)(z, \bar z, \xi) = \left\{\exp \left(\xi_k g^{lk} \frac{\p}{\p \bar z^l}\right)\right\} h.
\]
Let $Y$ be a nondegenerate nilpotent potential on $U \times \C^{0|d}$ and $\ast$ be the star product on $U \times \C^{0|d}$ associated with the pair $(\star, e^Y)$. Given a function $f \in C^\infty(U \times \C^{0|d})[\nu^{-1}, \nu]]$, the operator (\ref{E:oplf})  can be written uniquely as
\begin{equation}\label{E:lh}
              L_f = e^{-Y} L^\star_{h_I^K} \left(\nu^{|K|}{\theta^I} {\frac{\p}{\p \theta^K}}\right)e^Y
\end{equation}
for some $h_I^K \in C^\infty(U)[\nu^{-1},\nu]]$. If $h_I^K \in C^\infty(U)[[\nu]]$, then $L_f$ is a natural operator.

\begin{proposition}\label{P:lnat}
Given $f \in C^\infty(U \times \C^{0|d})[\nu^{-1},\nu]]$,  the operator $L_f$ is natural if and only if $f$ does not contain terms with negative powers of ~$\nu$.
\end{proposition}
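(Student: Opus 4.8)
The plan is to prove both implications. The easy direction is $(\Leftarrow)$: if $f \in C^\infty(U \times \C^{0|d})[[\nu]]$, then reading off (\ref{E:prod}) with $u = e^Y$, one sees that $f$ can be realized with $h_I^K \in C^\infty(U)[[\nu]]$ in the representation (\ref{E:lh}); more precisely, the coefficients $(uf)_{KQ}$ lie in $C^\infty(U)[[\nu]]$, the $v^{QP}$ carry a compensating power $\nu^{\frac12(|P|+|Q|)}$ by the previous theorem, and the factor $\nu^{|K|}$ attached to $\theta^I \p/\p\theta^K$ in (\ref{E:lh}) is exactly what is needed to absorb the half-integer powers. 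Since $L^\star_h$ is natural for $h \in C^\infty(U)[[\nu]]$ (because $\star$ is natural, Proposition~\ref{P:nat}), the operator $\theta^I \p/\p\theta^K$ is a differential operator of order $|K|$, and $e^{\pm Y}$ is conjugation by a unit formal function, the product $L_f$ in (\ref{E:lh}) is a natural formal differential operator; hence $L_f$ is natural.

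For $(\Rightarrow)$ I would argue by contraposition: suppose $f$ has a nonzero term of negative $\nu$-degree, and show $L_f$ is not natural, i.e. fails to start at $\nu^0$ or has $L_{f,0}$ not a multiplication operator — in fact it suffices to show $L_f$ has a nonzero coefficient at a negative power of $\nu$. The cleanest way is to apply $L_f$ to the unit constant: $L_f 1 = f$, so if $L_f = \nu^0 L_{f,0} + \nu L_{f,1} + \ldots$ were natural then $L_f 1 = L_{f,0}1 + \nu L_{f,1}1 + \ldots \in C^\infty(U\times\C^{0|d})[[\nu]]$, contradicting that $f$ has a negative-power term. This direction is essentially immediate once naturality is unwound to include the requirement $r = 0$ in (\ref{E:fdo}).

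The one point that deserves care, and which I expect to be the main obstacle, is pinning down precisely which $f$ correspond to $h_I^K \in C^\infty(U)[[\nu]]$ in (\ref{E:lh}) — that is, verifying that the bookkeeping of powers of $\nu$ in the rescaling $\theta \mapsto \sqrt{\nu}\,\eta$ from the previous theorem matches the factor $\nu^{|K|}$ in (\ref{E:lh}). Concretely: under $f = u^{-1}((uf)_{KQ} \star v^{QP} \star \ldots)$, writing $f = f_{IJ}\theta^I\bar\theta^J$ with $f_{IJ} \in C^\infty(U)[\nu^{-1},\nu]]$, the operator $L_f$ expressed as in (\ref{E:oplf}) corresponds via (\ref{E:lh}) to $h_I^K$ built from $v$ and $u$; using $v^{QP} = \nu^{\frac12(|P|+|Q|)}\tilde v^{QP}$ with $\tilde v^{QP} \in C^\infty(U)[[\nu]]$, and matching the Grassmann-degree counting, one gets $h_I^K \in C^\infty(U)[[\nu]]$ precisely when all $f_{IJ} \in C^\infty(U)[[\nu]]$, i.e. when $f$ contains no negative powers of $\nu$. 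Once this equivalence is established, both implications follow as above.
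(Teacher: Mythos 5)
Your treatment of the implication ``$L_f$ natural $\Rightarrow$ $f$ has no negative powers'' is correct and is exactly the paper's one\nobreakdash-line argument ($f=L_f1$). The problem is the other implication, which you label the easy direction but which is in fact the entire content of the proposition, and there your argument has a genuine gap. A first, concrete error: since $u=e^Y$ with $Y=\nu^{-1}Y_{-1}+\ldots$, the coefficients $u_{PQ}$, and hence $(uf)_{KQ}$, contain negative powers of $\nu$ even when $f\in C^\infty(U\times\C^{0|d})[[\nu]]$, so your claim that $(uf)_{KQ}\in C^\infty(U)[[\nu]]$ is false as stated. More seriously, the step you yourself flag as ``the main obstacle'' --- that $h_I^K\in C^\infty(U)[[\nu]]$ in (\ref{E:lh}) precisely when $f$ has no negative powers of $\nu$ --- cannot be settled by matching powers of $\nu$; you assert the equivalence but do not prove it.

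The real issue is ruling out cancellation. Writing $h_I^K=\nu^n h^K_{I,n}+\ldots$ with $(h^K_{I,n})$ a nonzero matrix of functions, one must show that the coefficient of $\nu^n$ in $f=L_f1$, which the paper computes to be
\[
 F=\left\{\left(S\,h^K_{I,n}\right)\!\left(z,\bar z,\tfrac{\p Y_{-1}}{\p z}\right)\right\}\theta^I\left(\tfrac{\p Y_{-1}}{\p\theta}\right)_K,
\]
is a \emph{nonzero} function; only then does $f\in C^\infty(U\times\C^{0|d})[[\nu]]$ force $n\geq 0$ and hence the naturality of $L_f$. If $F$ could vanish, $f$ could be free of negative powers while $h_I^K$ is not, and your equivalence would fail. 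Proving $F\neq 0$ is where the nondegeneracy of the nilpotent potential $Y$ enters: the paper filters $F$ by degree in $\theta,\bar\theta$, isolates the component $F_{N-q,q}=\sum h^K_{I,n}\,\theta^I(b\bar\theta)_K$, and uses the invertibility of the matrix $(b_{\alpha\beta})$ from (\ref{E:deg2}) to see that it survives (alternatively, as the paper remarks, one can invoke the implicit function theorem in superanalysis). Your proposal never uses the nondegeneracy of $Y$, which is a strong signal that the missing injectivity-of-the-leading-term argument is not a mere bookkeeping detail but the crux of the proof.
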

\begin{proof}
If $L_f$ is natural, then $f = L_f 1$ does not contain terms with negative powers of $\nu$.  If $f \in C^\infty(U \times \C^{0|d})[[\nu]]$, assume that 
\[
                h_I^K =  \nu^n h^K_{I,n} + \nu^{n+1} h^K_{I,n+1}  + \ldots
\]
for some $n\in \Z$ and $(h^K_{I,n})$ is a nonzero matrix-valued function on~ $U$.  Then
\begin{equation}\label{E:numin}
\nu^{-n} L_f = A_0 + \nu A_1 + \ldots
\end{equation}
is a natural operator. Applying the operator $\nu^{-n}L_f$ to the unit constant and setting $\nu=0$, we get from (\ref{E:lh}) that the component $A_0$ in (\ref{E:numin})  is the left multiplication operator by the function
\[
                F:=  \left\{ \left(S\, h^K_{I,n}\right)\! \! \left (z,\bar z, \frac{\p Y_{-1}}{\p z}\right)\right \} \theta^I \left({\frac{\p Y_{-1}}{\p \theta}}\right)_K,
\]
where for $K = \{\alpha_1, \ldots, \alpha_p\}$ we use the notation
\[
     \left({\frac{\p Y_{-1}}{\p \theta}}\right)_K := {\frac{\p Y_{-1}}{\p \theta^{\alpha_p}}} \ldots {\frac{\p Y_{-1}}{\p \theta^{\alpha_1}}}.
\]
The function $F$ is well defined because $Y_{-1}$ is nilpotent and even. We will show that $F$ is a nonzero function. Denote by $F_{p,q}$ the component   of $F$ of degree $p$ in the variables $\theta$ and of degree $q$ in the variables $\bar\theta$ and set
\[
     F_r = \sum_{p+q=r} F_{p,q}.
\]
Let $N$ be the largest integer such that $h^K_{I,n} =0$ for all $I,K$ satisfying $|I|+|K| < N$. Then $F_r=0$ for $r < N$ and
\[
    F_N =  \sum_{|I|+|K|=N} h^K_{I,n} \theta^I (a \theta + b \bar\theta)_K,
\]
where for $K = \{\alpha_1, \ldots, \alpha_p\}$ we set
\[
           (a \theta + b \bar\theta)_K := \left( a_{\alpha_p \gamma_p} \theta^{\gamma_p} + b_{\alpha_p \beta_p} \bar \theta^{\beta_p}\right) \ldots  \left( a_{\alpha_1 \gamma_1} \theta^{\gamma_1} + b_{\alpha_1 \beta_1} \bar \theta^{\beta_1}\right).
\] 
Let $q$ be the least integer such that $h^K_{I,n} =0$ for all $I,K$ such that $|I|+|K|= N$ and $|K| >q$. Then
\[
     F_{N-q,q} = \sum_{|I|=N-q, |K|=q} h^K_{I,n} \theta^I (b \bar\theta)_K.
\]
Since $h^K_{I,n}$ is a nonzero function on $U$ for at least one pair $(I,K)$ with $|I|=N-q$ and $|K|=q$ and the matrix $(b_{kl})$ is  nondegenerate, we see that $F_{N-q,q}$ is nonzero, which implies that $F$ is nonzero. Applying (\ref{E:numin}) to the unit constant 1 and multiplying both sides by $\nu^n$, we get that
\[
     f = \nu^n F + \nu^{n+1} A_1 1 + \nu^{n+2} A_2 1+ \ldots 
\]
Since $F$ is nonzero, the assumption that $f \in C^\infty(U \times \C^{0|d})[[\nu]]$ implies that $n \geq 0$. It follows from  (\ref{E:numin}) that $L_f$ is a natural operator.
\end{proof}

{\it Remark.} The fact that $F$ is nonzero can be derived from the implicit function theorem in superanalysis (see \cite{Ber}).

\begin{corollary}\label{C:nat}
Given a possibly degenerate star product with separation of variables $\star$ on an open subset $U \subset \C^m$ and a nondegenerate nilpotent potential $Y$ on $U \times \C^{0|d}$, the product $\ast$ associated with the pair $(\star, e^Y)$  is a star product. Moreover, $\ast$ is a natural star product.
\end{corollary}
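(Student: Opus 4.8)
The plan is to verify the two defining properties of a star product, namely that the associated product $\ast$ begins at $\nu^0$ with $C_0(f,g) = fg$, and then to invoke Proposition~\ref{P:lnat} to upgrade this to \emph{naturalness}. For the first property, I would start from the transferred-product formula (\ref{E:prod}) and analyze its $\nu$-expansion. The issue is that the inverse matrix $(v^{QP})$ of $(u_{PQ})$ over $(C^\infty(U)[\nu^{-1},\nu]],\star)$ has entries with a priori unbounded negative powers of $\nu$ (e.g.\ the entry $v^{[d][d]}$ begins at order $\nu^d$, but off-diagonal entries may begin at negative orders). So one must check that all the negative powers cancel in the combination $(uf)_{KQ}\star v^{QP}\star (ug)_{PL}$ and that the $\nu^0$-term is exactly $f_{KL}g$ when $f,g\in C^\infty(U\times\C^{0|d})$.

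The clean way to do this is via the operator description rather than (\ref{E:prod}). Since $Y$ is a nondegenerate nilpotent potential, by the previous theorem $e^Y$ is admissible, and by (\ref{E:lh}) every operator $L_f$ with $f\in C^\infty(U\times\C^{0|d})[[\nu]]$ can be written as $e^{-Y}L^\star_{h_I^K}(\nu^{|K|}\theta^I\partial/\partial\theta^K)e^Y$ for suitable $h_I^K$; and the content of Proposition~\ref{P:lnat} is precisely that $L_f$ is a natural operator if and only if $f\in C^\infty(U\times\C^{0|d})[[\nu]]$, i.e.\ $f$ has no negative powers of $\nu$. Granting this, the first step is: for $f,g\in C^\infty(U\times\C^{0|d})$ (no $\nu$), both $L_f$ and $L_g$ are natural, and since the product $\ast$ is differential with $L_{f\ast g}=L_f L_g$, the composition $L_f L_g$ is again a natural formal differential operator; applying it to $1$ gives $f\ast g = L_f L_g 1 \in C^\infty(U\times\C^{0|d})[[\nu]]$, so $\ast$ has $s=0$. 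Then the $\nu^0$-term: $C_0(f,g)=(L_f L_g 1)|_{\nu=0}=(L_f g)|_{\nu=0}$, and since $L_f$ is natural its $\nu^0$-component is a pointwise multiplication operator, equal to multiplication by $(L_f 1)|_{\nu=0}=f$, so $C_0(f,g)=fg$. This shows $\ast$ is a star product.

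For the second assertion, naturalness of $\ast$, I would argue that $L_f$ is natural for all $f\in C^\infty(\Pi E|_U)[[\nu]]$ directly from Proposition~\ref{P:lnat}: that proposition states $L_f$ is natural iff $f$ has no negative $\nu$-powers, which is exactly the hypothesis. Symmetrically, using the right-multiplication analogue of (\ref{E:oprt}) and the same argument (or noting that the dual construction with the roles of $\theta,\bar\theta$ swapped produces $R_f$ in the same form), $R_f$ is natural for $f\in C^\infty(\Pi E|_U)[[\nu]]$. By the definition of a natural star product recalled in the introduction, this is precisely what must be checked.

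The main obstacle is conceptual rather than computational: one must be careful that $\ast$ is a priori only a \emph{differential} product of the form (\ref{E:star}) with $s$ possibly negative (indeed the Example with $u=1+\nu^{-n}\theta\bar\theta$ shows $s<0$ can occur when $u$ is not of the form $e^Y$ with nondegenerate leading block), so the leverage of the nondegeneracy hypothesis on $Y$ enters exactly through Proposition~\ref{P:lnat}, whose proof used that the matrix $(b_{\alpha\beta})$ in (\ref{E:deg2}) is nondegenerate. Thus the corollary is essentially a packaging of Proposition~\ref{P:lnat}: once one accepts that the left-multiplication operators land in the space of natural operators precisely for $\nu$-power-series functions, both the star-product property and the naturalness follow by evaluating natural operators on $1$ and reading off the $\nu^0$-term. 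I would also remark that globality on $\Pi E$ (when $Y$, hence $e^Y$, is globally defined) then follows from the Corollary to Proposition~\ref{P:trivindep}.
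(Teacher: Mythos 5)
Your proposal is correct and follows essentially the same route as the paper: invoke Proposition~\ref{P:lnat} to conclude that $L_f$ is natural for $f \in C^\infty(U\times\C^{0|d})$, read off $s=0$ and $C_0(f,g)=fg$ from the fact that the $\nu^0$-component of a natural $L_f$ is the multiplication operator by $L_f 1|_{\nu=0}=f$, and treat $R_f$ by a symmetric argument. The detour through $L_fL_g$ being a composition of natural operators is harmless but not needed, since naturalness of $L_f$ alone already gives $f\ast g = L_f g \in C^\infty(U\times\C^{0|d})[[\nu]]$.
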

\begin{proof}
    Given an arbitrary function $f \in C^\infty(U \times \C^{0|d})$, the operator $L_f = A_0 + \nu A_1 + \ldots$ is natural according to Proposition \ref{P:lnat}. Therefore,  $A_0$ is a left multiplication operator and in (\ref{E:star}) $s = 0$. Since $L_f 1 = f \ast 1 = f$, we see that $A_0$ is the left multiplication operator by $f$ and thus $C_0(f,g) = fg$. Hence, $\ast$ is a star product. Given $f \in C^\infty(U \times \C^{0|d})$, one can prove similarly that $R_f$  is a natural operator. It follows that the star product $\ast$ is natural.
\end{proof}
 
{\it Remark.} One can generalize the proof of Proposition \ref{P:nat} in order to show that any star product with separation of variables on $U \times \C^{0|d}$ is natural. 

For a star product with separation of variables $\ast$ of the anti-Wick type on $U \times \C^{0|d}$ the operator $C_1$ is of the form
\begin{eqnarray}\label{E:cone}
     C_1(f,g) = \frac{\p f}{\p \bar z^l} A^{lk} \frac{\p g}{\p z^k} + \frac{\p f}{\p \bar z^l} B^{l \alpha} \overrightarrow{\frac{\p}{\p \theta^\alpha}}g +\\
 f \overleftarrow{\frac{\p}{\p \bar \theta^\beta}} C^{\beta k} \frac{\p g}{\p z^k} + f \overleftarrow{\frac{\p}{\p \bar \theta^\beta}} D^{\beta\alpha}\overrightarrow{\frac{\p}{\p \theta^\alpha}}g,\nonumber 
\end{eqnarray}
where the matrix
\begin{equation}\label{E:superg}
\left(\begin{array}{cc}
A^{lk} & B^{l\alpha} \\
C^{\beta k} & D^{\beta\alpha}
\end{array}\right)
\end{equation}
is an even Poisson tensor of type (1,1) on $U \times \C^{0|d}$ (so that $A^{lk}, D^{\beta\alpha}$ are even and $B^{l \alpha},C^{\beta k}$ are odd). A star product with separation of variables $\ast$ of the anti-Wick type on $U \times \C^{0|d}$  is called nondegenerate if the matrix (\ref{E:superg}) is nondegenerate at every point of $U$ (i.e., when the matrices $A^{lk}$ and $D^{\beta\alpha}$ are nondegenerate). Formula (\ref{E:cone}) and the definition of nondegeneracy have obvious analogues for the 
star products of the Wick type.

Let $U \subset \C^m$ be an open subset. Given an even formal function
\[
     X = \nu^{-1}X_{-1} + X_0 + \nu X_1 + \ldots  
\]
on  $U \times \C^{0|d}$ whose nilpotent component is a nondegenerate nilpotent potential $Y$ and such that the function
\[
    \Phi := X - Y 
\]
is a potential of a formal pseudo-K\"ahler form 
\[
    \omega = \nu^{-1}\omega_{-1} + \omega_0 + \ldots
\]
on $U$, we will call $X$ a nondegenerate formal potential on $U \times \C^{0|d}$. To each nondegenerate formal potential $X$ on $U \times \C^{0|d}$ there corresponds the star product $\ast$ on $U \times \C^{0|d}$ associated with the pair $(\star, e^Y)$, where 
$Y$ is the nilpotent component of $X$ and $\star$ is the star product with separation of variables on $U$ with the classifying form $\omega = i\p\bar\p(X-Y)$. 

{\it Remark.} One can similarly construct a star product with separation of variables of the Wick type starting from a nondegenerate formal potential $X$ (the notion of a nondegenerate potential will remain the same).

We will give explicit formulas for some left and graded right $\ast$-multi\-plication operators analogous to (\ref{E:classleft}) and (\ref{E:classright}). It follows from  (\ref{E:classleft}) and (\ref{E:oplf})  that
\begin{equation*}
    e^{-Y} L^\star_{\frac{\p \Phi}{\p z^k}} e^Y = e^{-Y}\left(\frac{\p \Phi}{\p z^k} + \frac{\p}{\p z^k}\right)e^Y = \frac{\p X}{\p z^k} + \frac{\p}{\p z^k}
\end{equation*}
and
\begin{equation*}
     e^{-Y}\frac{\p}{\p \theta^\alpha} e^Y = e^{-X} \frac{\p}{\p \theta^\alpha} e^X = \frac{\p X}{\p \theta^\alpha} + \frac{\p}{\p \theta^\alpha}.
\end{equation*}
Therefore,
\begin{equation}\label{E:l}
        L_{\frac{\p X}{\p z^k}} = \frac{\p X}{\p z^k} + \frac{\p}{\p z^k} \mbox{ and } L_{\frac{\p X}{\p \theta^\alpha}} =  \frac{\p X}{\p \theta^\alpha}  + \frac{\p}{\p \theta^\alpha}.
\end{equation}
Similarly, it follows from (\ref{E:classright}) and  (\ref{E:oprt})  that
\begin{equation}\label{E:r}
     R_{\frac{\p X}{\p \bar z^l}} = \frac{\p X}{\p \bar z^l} + \frac{\p}{\p \bar z^l} \mbox{ and } R_{\frac{\p X}{\p \bar\theta^\beta}} =\frac{\p X}{\p \bar\theta^\beta} + \frac{\p}{\p \bar\theta^\beta}.
\end{equation}
\begin{proposition}\label{P:nondeg}
Suppose that $\ast$ is a star product with separation of variables on $U \times \C^{0|d}$ such that there exists a formal function 
\[
X = \nu^{-1}X_{-1} + X_0 + \nu X_1 + \ldots
\]
on $U \times \C^{0|d}$ for which formulas (\ref{E:l}) (or formulas (\ref{E:r})) hold. Then $X$ is a nondegenerate formal potential and $\ast$ is nondegenerate.
\end{proposition}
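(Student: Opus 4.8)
The plan is to first recover the first-order bidifferential operator $C_1$ of $\ast$ from the hypotheses, and then to read off the leading ($\nu^{-1}$) data of $X$ together with the Poisson tensor of $\ast$ by passing to bodies, i.e.\ by restricting the relevant coefficient functions to $\theta=\bar\theta=0$.

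Since $\ast$ is a star product, $C_0(f,g)=fg$, so comparing the $\nu^{0}$-parts of the two operators in the first equation of (\ref{E:l}) (recall $\tfrac{\p X}{\p z^k}=\nu^{-1}\tfrac{\p X_{-1}}{\p z^k}+\tfrac{\p X_{0}}{\p z^k}+\dots$) yields the operator identity
\[
C_1\!\left(\frac{\p X_{-1}}{\p z^k},\,\cdot\,\right)=\frac{\p}{\p z^k}
\]
on $C^\infty(U\times\C^{0|d})$, and the second equation of (\ref{E:l}) similarly gives $C_1\!\left(\tfrac{\p X_{-1}}{\p\theta^\alpha},\,\cdot\,\right)=\overrightarrow{\tfrac{\p}{\p\theta^\alpha}}$. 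Next I would substitute these into the general form (\ref{E:cone}) of $C_1$ for a star product with separation of variables of the anti-Wick type, which applies since $\ast$ is of that type. For a fixed first argument $f$ the right side of (\ref{E:cone}) is a first-order operator in $g$ differentiating only in the $z$- and $\theta$-directions, so the comparison with $\tfrac{\p}{\p z^k}$ and $\overrightarrow{\tfrac{\p}{\p\theta^\alpha}}$ amounts to matching coefficient functions. Taking $f=\tfrac{\p X_{-1}}{\p z^k}$ and reading off the coefficient of $\tfrac{\p}{\p z^p}$ gives
\[
\frac{\p^2 X_{-1}}{\p\bar z^l\,\p z^k}\,A^{lp}+\left(\frac{\p X_{-1}}{\p z^k}\overleftarrow{\frac{\p}{\p\bar\theta^\beta}}\right)C^{\beta p}=\delta_k^p,
\]
and taking $f=\tfrac{\p X_{-1}}{\p\theta^\gamma}$ and reading off the coefficient of $\overrightarrow{\tfrac{\p}{\p\theta^\alpha}}$ gives
\[
\left(\overrightarrow{\frac{\p}{\p\bar z^l}}\frac{\p X_{-1}}{\p\theta^\gamma}\right)B^{l\alpha}+\left(\frac{\p X_{-1}}{\p\theta^\gamma}\overleftarrow{\frac{\p}{\p\bar\theta^\beta}}\right)D^{\beta\alpha}=\delta_\gamma^\alpha ,
\]
as identities over $C^\infty(U\times\C^{0|d})$.

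Finally I would pass to bodies. Since $A^{lk},D^{\beta\alpha}$ are even while $B^{l\alpha},C^{\beta k}$ are odd, and every odd element of $C^\infty(U\times\C^{0|d})$ has vanishing body, the $C$- and $B$-terms above drop out at $\theta=\bar\theta=0$. Put $\Phi_r:=X_r|_{\theta=\bar\theta=0}$, so $Y_{-1}:=X_{-1}-\Phi_{-1}$ is the nilpotent component of $X_{-1}$; being even it has no term linear in the odd variables, and its quadratic part is (\ref{E:deg2}), whence $\big(\tfrac{\p X_{-1}}{\p\theta^\gamma}\overleftarrow{\tfrac{\p}{\p\bar\theta^\beta}}\big)\big|_{0}=b_{\gamma\beta}$. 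Hence the bodies of the two identities read $g_{kl}\,(A^{lp}|_0)=\delta_k^p$ and $b_{\gamma\beta}\,(D^{\beta\alpha}|_0)=\delta_\gamma^\alpha$, where $g_{kl}:=\tfrac{\p^2\Phi_{-1}}{\p z^k\p\bar z^l}$. Therefore $g_{kl}$ is nondegenerate on $U$, so $\omega_{-1}:=i\p\bar\p\Phi_{-1}$ is nondegenerate, i.e.\ $\Phi_{-1}$ is a potential of the pseudo-K\"ahler form $\omega_{-1}$; the matrix $b=(b_{\alpha\beta})$ is nondegenerate at every point of $U$, so by the criterion following (\ref{E:ytheta}) the nilpotent component $Y:=X-\Phi$ of $X$, with $\Phi:=\nu^{-1}\Phi_{-1}+\Phi_0+\dots$, is a nondegenerate nilpotent potential; thus $X$ is a nondegenerate formal potential. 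And $(A^{lk}|_0),(D^{\beta\alpha}|_0)$ nondegenerate at every point of $U$ forces the even matrices $(A^{lk}),(D^{\beta\alpha})$ over $C^\infty(U\times\C^{0|d})$ to be nondegenerate (a matrix over this algebra is invertible iff its body is), i.e.\ the Poisson tensor (\ref{E:superg}) is nondegenerate, so $\ast$ is nondegenerate. If (\ref{E:r}) is assumed instead, one argues the same way with left $\ast$-multiplication replaced by graded right $\ast$-multiplication, extracting $C_1(\,\cdot\,,\tfrac{\p X_{-1}}{\p\bar z^l})$ and $C_1(\,\cdot\,,\tfrac{\p X_{-1}}{\p\bar\theta^\beta})$ and again taking bodies. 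The bulk of this is routine bookkeeping; the one step deserving care is the coefficient-matching in (\ref{E:cone}), where the parity juggling is most cleanly avoided by passing to bodies, at which point the odd blocks $B,C$ vanish outright.
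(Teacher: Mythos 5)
Your proof is correct and follows essentially the same route as the paper: the paper's own argument also extracts from (\ref{E:cone}) and the $\nu^0$-part of (\ref{E:l}) that the super-Hessian matrix (\ref{E:hess}) of $X_{-1}$ is inverse to the Poisson tensor (\ref{E:superg}), and concludes nondegeneracy of both. Your extra step of passing to bodies to isolate the blocks $(A^{lk})$ and $(D^{\beta\alpha})$ is just a more explicit rendering of the same matrix-inversion observation.
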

\begin{proof}
Formulas (\ref{E:cone}) and  (\ref{E:l}) (or (\ref{E:r})) imply that the matrix (\ref{E:superg}) is inverse to the matrix
\begin{equation}\label{E:hess}
         \left(\begin{array}{cc}
\frac{\p^2 X_{-1}}{\p z^k \p \bar z^l} & \frac{\p X_{-1}}{\p z^k}\overleftarrow{\frac{\p}{\p \bar \theta^\beta}}\\
 \overrightarrow{\frac{\p}{\p \theta^\alpha}}\frac{\p X_{-1}}{\p  \bar z^l} & \overrightarrow{\frac{\p}{\p \theta^\alpha}} X_{-1}\overleftarrow{\frac{\p}{\p \bar \theta^\beta}}
\end{array}\right)                 
\end{equation}
and therefore both matrices are nondegenerate, whence the proposition follows.
\end{proof}

\begin{proposition}\label{P:bullast}
Let $\bullet$ be another star product with separation of variables  on $U \times \C^{0|d}$ for which the operators of graded right $\bullet$-multiplication by $\p X / \p \bar z^l$ and $\p X / \p \bar\theta^\beta$ are the same as for the product $\ast$,
\[
     R^\bullet_{\frac{\p X}{\p \bar z^l}} = R_{\frac{\p X}{\p \bar z^l}} \mbox { and } Q^\bullet_{\frac{\p X}{\p \bar\theta^\beta}} = R_{\frac{\p X}{\p \bar\theta^\beta}}.
\]
Then the star products $\bullet$ and $\ast$ coincide.
\end{proposition}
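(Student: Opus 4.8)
The plan is to deduce $\bullet=\ast$ from the equality $L^\bullet_f=L^\ast_f$ of all left multiplication operators, which suffices since $f\bullet g=L^\bullet_f g$ and $f\ast g=L^\ast_f g$. I will use two elementary observations about $\ast$. First, $g\mapsto R^\ast_g$ is a graded anti-homomorphism of $(C^\infty(U\times\C^{0|d})[\nu^{-1},\nu]],\ast)$ onto an algebra of formal differential operators, and $R^\ast_g 1=g$; consequently, if a (homogeneous) formal differential operator $B$ supercommutes with $R^\ast_g$ for \emph{every} $g$ and $B1=0$, then $Bg=B(R^\ast_g1)=\pm R^\ast_g(B1)=0$ for all $g$, i.e.\ $B=0$. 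Second, $g\mapsto R^\ast_g$ is $\nu$-adically continuous, so a $B$ supercommuting with the operators $R^\ast_h$ for $h$ ranging over a set of $\ast$-generators of the whole algebra in fact supercommutes with all $R^\ast_g$.

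The technical core is the claim that, possibly after shrinking $U$, the functions $\bar z^l,\ \bar\theta^\beta,\ \p X/\p\bar z^l,\ \p X/\p\bar\theta^\beta$ generate $(C^\infty(U\times\C^{0|d})[\nu^{-1},\nu]],\ast)$. To prove it I would first clear the pole: the functions $\bar z^l,\ \bar\theta^\beta,\ \nu\,\p X/\p\bar z^l,\ \nu\,\p X/\p\bar\theta^\beta$ lie in $C^\infty(U\times\C^{0|d})[[\nu]]$, and at $\nu=0$ they reduce to $\bar z^l,\ \bar\theta^\beta,\ \p\Phi_{-1}/\p\bar z^l,\ \p Y_{-1}/\p\bar\theta^\beta$, since $X=\Phi+Y$ with $\Phi=\Phi(z,\bar z)$ independent of the odd variables and $Y$ even and nilpotent (so $Y$ starts in degree two in $\theta,\bar\theta$). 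By Proposition~\ref{P:nondeg}, $X$ is a nondegenerate formal potential, so $(g_{kl})=(\p^2\Phi_{-1}/\p z^k\p\bar z^l)$ is a nondegenerate pseudo-K\"ahler metric and the matrix $(b_{\alpha\beta})$ of (\ref{E:deg2}) is nondegenerate. Evaluating the super-Jacobian of the reduced system with respect to $(z^k,\bar z^l,\theta^\alpha,\bar\theta^\beta)$ at $\theta=\bar\theta=0$, the mixed even--odd and odd--even blocks vanish, and the system is block-diagonal with even--even block of determinant $\pm\det(g_{kl})$ and odd--odd block governed by $(b_{\alpha\beta})$; both are invertible. By the superanalytic inverse function theorem these functions form a local super-coordinate system near every point of $U$, so each $h\in C^\infty(U\times\C^{0|d})$ is, $\nu$-adically, a convergent sum of smooth functions of $\bar z^l,\ \bar\theta^\beta,\ \nu\,\p X/\p\bar z^l,\ \nu\,\p X/\p\bar\theta^\beta$; a standard successive-approximation argument in powers of $\nu$ (using $C_0(f,g)=fg$) then places $h$, hence all of $C^\infty(U\times\C^{0|d})$, in the $\ast$-subalgebra generated by those four families, and since that subalgebra is a $\C[\nu^{-1},\nu]]$-module closed $\nu$-adically the claim follows.

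With the generation claim established the proof concludes quickly. Fix $f$ and put $B:=L^\bullet_f-L^\ast_f$. Then $B1=f\bullet1-f\ast1=f-f=0$. Moreover $B$ supercommutes with each of $\bar z^l$, $\bar\theta^\beta$, $R^\ast_{\p X/\p\bar z^l}$, $R^\ast_{\p X/\p\bar\theta^\beta}$: for the first two because $R^\bullet$ and $R^\ast$ agree on antiholomorphic functions while $L^\bullet_f$ and $L^\ast_f$ supercommute with their respective right multiplications; for the last two because $L^\ast_f$ supercommutes with $R^\ast_{\p X/\p\bar z^l}$ and $R^\ast_{\p X/\p\bar\theta^\beta}$, $L^\bullet_f$ supercommutes with $R^\bullet_{\p X/\p\bar z^l}$ and $R^\bullet_{\p X/\p\bar\theta^\beta}$, and these coincide with the $\ast$-operators by hypothesis (equivalently, by (\ref{E:r})). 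By the two observations of the first paragraph and the generation claim, $B$ supercommutes with $R^\ast_g$ for all $g$, whence $B=0$. Thus $L^\bullet_f=L^\ast_f$ for all $f$, so $\bullet=\ast$ on the chart used; covering $U$ by such charts and using that star products are local, $\bullet=\ast$ on $U$.

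The main obstacle is the generation claim, and within it the verification that $\bar z^l,\ \bar\theta^\beta,\ \p X/\p\bar z^l,\ \p X/\p\bar\theta^\beta$ form a formal super-coordinate system — precisely where the nondegeneracy of the pseudo-K\"ahler metric $(g_{kl})$ and of $(b_{\alpha\beta})$ enters — together with the bookkeeping forced by the negative powers of $\nu$ in $X$ and the passage to a possibly smaller chart.
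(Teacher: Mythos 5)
Your overall skeleton is sound and close in spirit to the paper's: reduce everything to showing $L^\bullet_f=L_f$, and get this from the fact that $L^\bullet_f$ supercommutes with enough graded right $\ast$-multiplication operators. But the technical core of your argument --- the claim that $\bar z^l,\ \bar\theta^\beta,\ \p X/\p\bar z^l,\ \p X/\p\bar\theta^\beta$ generate $(C^\infty(U\times\C^{0|d})[\nu^{-1},\nu]],\ast)$ as a $\nu$-adically closed $\C[\nu^{-1},\nu]]$-subalgebra --- is false, and the coordinate-system observation does not repair it. The point is that every element of the $\ast$-subalgebra generated by finitely many functions (or even by all antiholomorphic functions together with the $\p X/\p\bar z^l,\ \p X/\p\bar\theta^\beta$) has each of its $\nu$-coefficients equal to a \emph{finite} sum of products of derivatives of the generators' coefficients, and $\nu$-adic closure does not enlarge this class: the $\nu^k$-coefficient of a $\nu$-adic limit is the eventual $\nu^k$-coefficient of the approximants. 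Already in the flat case $m=1$, $d=0$, $X=\nu^{-1}z\bar z$, the generators are $\bar z$ and $\nu^{-1}z$, every element of the closed generated subalgebra has polynomial $\nu$-coefficients, and a function such as $e^{-z\bar z}$ is not reached. The "successive-approximation argument using $C_0(f,g)=fg$" cannot start, because already at order $\nu^0$ a general smooth function is a smooth function \emph{of} the generators but not a $\ast$-polynomial \emph{in} them; passing from the former to the latter would require a functional calculus you have not constructed. Since your deduction that $B=L^\bullet_f-L_f$ supercommutes with \emph{all} $R_g$ rests entirely on this generation claim, the proof has a genuine gap.

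The gap can be closed by arguing at the level of formal differential operators rather than at the level of the function algebra, which is what the paper does. One shows directly that any formal differential operator supercommuting with multiplication by antiholomorphic functions and with the operators $R_{\p X/\p\bar z^l}=\p X/\p\bar z^l+\p/\p\bar z^l$ and $R_{\p X/\p\bar\theta^\beta}=\p X/\p\bar\theta^\beta+\p/\p\bar\theta^\beta$ is automatically a left $\ast$-multiplication operator: conjugating by $e^Y$ turns these conditions into supercommutation with $\p\Phi/\p\bar z^l+\p/\p\bar z^l$, with $\p/\p\bar\theta^\beta$, and with antiholomorphic multiplications, which forces the operator into the form $L^\star_{h_I^K}\theta^I\,\p/\p\theta^K$ by the characterization (\ref{E:classright}) of the base product $\star$. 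Applied to $L^\bullet_f$ itself (no need to form the difference $B$), this shows $L^\bullet_f$ is a left $\ast$-multiplication operator, and $L^\bullet_f1=f$ then pins it down as $L_f$. Your observations about $R$ being a graded anti-homomorphism and about $B1=0$ are correct but end up unnecessary once the commutant is identified this way.
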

\begin{proof}
 Given $f \in C^\infty(U \times \C^{0|d})[\nu^{-1},\nu]]$, let $L^\bullet_f$ be the left $\bullet$-multipli\-cation operator by $f$. It supercommutes with the operators (\ref{E:r}) and with the operators of (left) multiplication by antiholomorphic functions on $U \times \C^{0|d}$. It follows from (\ref{E:r}) that the operator $e^Y L^\bullet_f e^{-Y}$ supercommutes with the operators
\[
                  \frac{\p \Phi}{\p \bar z^l} + \frac{\p}{\p \bar z^l}, \frac{\p}{\p \bar\theta^\beta},
\]
and the  multiplication operators by antiholomorphic functions. Hence, it can be written as
\[
          e^Y L^\bullet_f e^{-Y} = L^\star_{h_I^K} \theta^I \frac{\p}{\p \theta^K}
\]
for some $h_I^K \in C^\infty(U)[\nu^{-1},\nu]]$. It implies that $L^\bullet_f$ is a left $\ast$-multiplication operator. Since $L^\bullet_f 1 = f \bullet 1 = f$, it follows that $L^\bullet_f = L_f$. Therefore, the star products $\bullet$ and $\ast$ coincide.
\end{proof}

Proposition \ref{P:bullast} means that if $X$ is a nondegenerate formal potential on $U \times \C^{0|d}$, there exists a unique star product with separation of variables~ $\ast$ on $U \times \C^{0|d}$ satisfying (\ref{E:l}) and (\ref{E:r}).

Let $\tilde X$ be another nondegenerate formal potential on $U \times \C^{0|d}$ such that $\tilde X = X + a + b$, where the functions $a = a(z, \theta)$ and $b = b(\bar z, \bar\theta)$ are holomorphic and antiholomorphic, respectively. Lemma~ \ref{L:eab} implies that the star product with separation of variables on $U \times \C^{0|d}$ corresponding to the potential $\tilde X$ coincides with $\ast$. 

Let $E$ be a holomorphic vector bundle  over a complex manifold ~$M$. One can define differential forms on the split complex supermanifold ~$\Pi E$ and extend the operators $\p,\bar\p,$ and $d = \p + \bar\p$ to $\Pi E$ (see \cite{Ber},\cite{W}). If $U \subset M$ is a coordinate chart for which there is a holomorphic trivialization $\Pi E|_U \cong U \times \C^{0|d}$, a differential form on $U \times \C^{0|d}$ is a function in the supercommuting variables $z, \bar z, \theta, \bar \theta, dz, d\bar z, d\theta, d\bar \theta$.  The variables $z, \bar z, d\theta, d\bar\theta$ are even and $\theta, \bar\theta, dz,d\bar z$ are odd. The variables $d\theta, d\bar\theta$ are not nilpotent and a differential form is supposed to be polynomial in $d\theta,d\bar\theta$. We assume that in local coordinates the operators
\begin{equation}\label{E:superdd}
     \p = dz^k \frac{\p}{\p z^k} + d \theta^\alpha \frac{\p}{\p \theta^\alpha} \mbox{ and } \bar\p = d\bar z^l \frac{\p}{\p \bar z^l} +  d\bar\theta^\beta\frac{\p}{\p \bar \theta^\beta}
\end{equation}
on $U \times \C^{0|d}$ act from the left.  If $U$ is contractible, using the Euler operators $\theta^\alpha \p / \p \theta^\alpha$ and  $\bar \theta^\beta \p / \p \bar \theta^\beta$, one can extend the $\bar\p$-Poincar\' e lemma to $U \times \C^{0|d}$. In particular, if
\[
      \Omega = A_{kl} dz^k d\bar z^l + B_{k\beta} dz^k d \bar \theta^\beta + C_{\alpha  l} d \theta^\alpha d \bar z^l + B_{\alpha  \beta} d \theta^\alpha d \bar \theta^\beta
\]
is a closed even (1,1)-form on $U \times \C^{0|d}$ with $U$ contractible, there exists an even potential $F$ on $U \times \C^{0|d}$ such that 
\begin{eqnarray*} 
\Omega = i\p \bar \p F = i\Bigg(\frac{\p^2 F}{\p z^k \bar z^l}  dz^k d\bar z^l - \frac{\p^2 F}{\p z^k \p \bar \theta^\beta} dz^k d \bar \theta^\beta +\\
 \frac{\p^2 F}{\p \theta^\alpha \p \bar z^l}d \theta^\alpha d \bar z^l + \frac{\p^2 F}{\p \theta^\alpha \p \bar \theta^\beta} d \theta^\alpha d \bar \theta^\beta\Bigg).
\end{eqnarray*} 
The potential $F$ is determined uniquely up to a summand $a + b$, where $a$ is holomorphic and $b$ is antiholomorphic. 

The form $\Omega$ is nondegenerate if the matrix
\begin{equation}\label{E:abcd}
    \left(\begin{array}{cc}
A_{kl} & B_{k \beta}\\
C_{\alpha l} & D_{\alpha\beta}
\end{array}\right)  
= i  \left(\begin{array}{cc}
\frac{\p^2 F}{\p z^k \bar z^l} & - \frac{\p^2 F}{\p z^k \p \bar \theta^\beta}\\
\frac{\p^2 F}{\p \theta^\alpha \p \bar z^l} & \frac{\p^2 F}{\p \theta^\alpha \p \bar \theta^\beta}
\end{array}\right)
\end{equation}
is nondegenerate. Since the entries $A_{kl}$ and $D_{\alpha\beta}$ are even and the entries $B_{k \beta}$ and $C_{\alpha l}$ are odd, the matrix (\ref{E:abcd}) is nondegenerate if and only if the matrices
\[
 (A_{kl}) =i \left(\frac{\p^2 F}{\p z^k \bar z^l}\right) \mbox{ and } (D_{\alpha\beta}) = i\left(\frac{\p^2 F}{\p \theta^\alpha \p \bar \theta^\beta} \right) 
\]
are nondegenerate.

If $\Omega$ is a closed even global (1,1)-form on $\Pi E$, then for every contractible neighborhood $U \subset M$ over which $E$ is holomorphically trivializable there exists an even potential $F$ on $\Pi E|_U$ satisfying $\Omega = i\p \bar \p F$.

Combining the results obtained above, we get the following theorem.
\begin{theorem}
Let $E$ be a holomorphic vector bundle of rank $d$ over a complex manifold $M$. Suppose that
\begin{equation}\label{E:Omega}
         \Omega = \nu^{-1} \Omega_{-1} + \Omega_0 + \nu \Omega_1 + \ldots
\end{equation}
is a closed formal (1,1)-form globally defined on $\Pi E$ such that $\Omega_{-1}$ is nondegenerate. Then there exists a unique nondegenerate star product with separation of variables $\ast$ on $\Pi E$ such that for any local holomorphic trivialization $\Pi E|_U \cong U \times \C^{0|d}$ over a contractible coordinate chart $U \subset M$ there exists a formal potential $X$ of the form $\Omega$ on $U \times \C^{0|d}$ for which formulas (\ref{E:l}) and (\ref{E:r}) hold.
\end{theorem}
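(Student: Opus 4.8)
The plan is to construct $\ast$ separately over each chart of a good cover of $M$, to verify that the chartwise products depend only on $\Omega$ (not on the trivialization or the potential used), and then to glue. I would begin by covering $M$ with contractible coordinate charts $U$ over which $E$ is holomorphically trivializable, and fixing a holomorphic trivialization $\Pi E|_U \cong U\times\C^{0|d}$. Applying the $\bar\partial$-Poincar\'e lemma on $U\times\C^{0|d}$ (extended as in the discussion preceding the theorem) degree by degree in $\nu$ produces an even formal potential $X=\nu^{-1}X_{-1}+X_0+\ldots$ with $\Omega=i\partial\bar\partial X$ there. Split $X=\Phi+Y$ into its $\theta,\bar\theta$-free part $\Phi$, which is a formal function on $U$, and its part $Y$ involving the odd variables. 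By the criterion for (\ref{E:abcd}), nondegeneracy of $\Omega_{-1}$ is equivalent to nondegeneracy of the two diagonal blocks of the matrix of type (\ref{E:abcd}) built from $X_{-1}$; that is, $\omega:=i\partial\bar\partial\Phi$ is a formal pseudo-K\"ahler form on $U$ and $Y$ is a nondegenerate nilpotent potential, so $X$ is a nondegenerate formal potential on $U\times\C^{0|d}$. Let $\ast_U$ be the product associated with the pair $(\star,e^Y)$, where $\star$ is the star product with separation of variables on $U$ with classifying form $\omega$. By Corollary~\ref{C:nat} and Proposition~\ref{P:nondeg}, $\ast_U$ is a natural, nondegenerate star product with separation of variables satisfying (\ref{E:l}) and (\ref{E:r}); and, as recorded right after Proposition~\ref{P:bullast}, it is the \emph{unique} star product with separation of variables on $U\times\C^{0|d}$ satisfying (\ref{E:l}) and (\ref{E:r}), and it does not change if $X$ is replaced by another potential $X+a+b$ of $\Omega$ with $a$ holomorphic and $b$ antiholomorphic.

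Next I would show that $\ast_U$ depends only on $\Omega|_{\Pi E|_U}$, not on the trivialization. Since in a fixed trivialization the product is independent of the chosen potential, it suffices to compare the products obtained from one and the same potential $X$; and since $i\partial\bar\partial$ acts intrinsically on functions, $X$ is simultaneously a potential of $\Omega$ in every holomorphic trivialization. A change of trivialization acts $\C$-linearly on the odd variables, hence respects the decomposition $X=\Phi+Y$: the $\theta,\bar\theta$-free part $\Phi$ is a function on $U$ and is the same in all trivializations, and the odd-variable parts $Y$ are one and the same function. Each such $Y$ is a nondegenerate nilpotent potential, because the relevant off-diagonal block of its quadratic part is carried to another nondegenerate matrix by the invertible holomorphic/antiholomorphic change of odd coordinates. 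Consequently $\omega=i\partial\bar\partial\Phi$, and with it the star product $\star$ on $U$, is unchanged, while the admissible function $e^Y$ is literally the same function in both trivializations. Proposition~\ref{P:trivindep} then gives that the product associated with $(\star,e^Y)$ is the same in every trivialization (Lemma~\ref{L:eab} absorbs the remaining freedom in the choice of potential in the new trivialization). Writing $\ast_{\Omega,U}$ for this product, it satisfies (\ref{E:l}) and (\ref{E:r}) in every trivialization, for a suitable potential of $\Omega$.

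For the gluing step: restricting the construction to a smaller chart amounts to restricting $\star$ and the function $e^Y$ and applying the construction there, since the associated product is given by the local formula (\ref{E:prod}) and the base star product restricts to the star product with separation of variables with the restricted classifying form; hence $\ast_{\Omega,U}$ restricted to $\Pi E|_V$ equals $\ast_{\Omega,V}$ for any contractible $V\subset U$ over which $E$ is trivializable. Given two charts $U,U'$ of the cover and any point of $U\cap U'$, I choose a contractible such $V$ inside $U\cap U'$; then $\ast_{\Omega,U}|_{\Pi E|_V}=\ast_{\Omega,V}=\ast_{\Omega,U'}|_{\Pi E|_V}$, so $\ast_{\Omega,U}$ and $\ast_{\Omega,U'}$ agree on $\Pi E|_{U\cap U'}$. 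The local products therefore glue to a single product $\ast$ on $\Pi E$, which is a star product with separation of variables and is nondegenerate because these are local conditions, and which satisfies the asserted characterization by construction. For uniqueness, if $\ast'$ is another nondegenerate star product with separation of variables on $\Pi E$ enjoying the same property, then over each chart $U$ with a chosen trivialization $\ast'$ satisfies (\ref{E:l}) and (\ref{E:r}) for some potential $X'$ of $\Omega$; by the local uniqueness recalled in the first paragraph, $\ast'|_{\Pi E|_U}=\ast_{\Omega,U}=\ast|_{\Pi E|_U}$, and since these charts cover $\Pi E$ we conclude $\ast'=\ast$.

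I expect the trivialization-independence in the second paragraph to be the main obstacle. It requires tracking precisely how the potential $X$, its $\theta,\bar\theta$-free part $\Phi$, and its odd-variable part $Y$ behave under a change of holomorphic trivialization, and confirming that the hypotheses of Proposition~\ref{P:trivindep} and Lemma~\ref{L:eab} are met, namely that $e^Y$ changes at most by multiplication by $e^{a+b}$ with $a$ even nilpotent holomorphic and $b$ even nilpotent antiholomorphic, and that the base classifying form $\omega$ is unaffected. The two structural facts that make this go through are that the transition functions are holomorphic — so the change of odd coordinates preserves both the holomorphic/antiholomorphic splitting and the splitting into $\theta,\bar\theta$-free and odd-variable parts and carries a nondegenerate $b$-block to a nondegenerate one — and that two potentials of a given formal $(1,1)$-form differ by a holomorphic plus an antiholomorphic function; the rest is bookkeeping.
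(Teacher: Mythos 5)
Your proposal is correct and follows exactly the route the paper intends: the paper states this theorem with only the remark ``Combining the results obtained above, we get the following theorem,'' and your argument is precisely that combination --- local potentials via the super $\bar\p$-Poincar\'e lemma, the split $X=\Phi+Y$, the product associated with $(\star,e^Y)$, uniqueness from Proposition~\ref{P:bullast}, potential- and trivialization-independence from Lemma~\ref{L:eab} and Proposition~\ref{P:trivindep}, and gluing. Your second paragraph in particular supplies the bookkeeping the paper leaves implicit, and it checks out.
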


{\it Remark.} One can prove that in complete analogy with the results of ~\cite{CMP1}, the nondegenerate formal forms (\ref{E:Omega}) on $\Pi E$ bijectively correspond to the nondegenerate star products with separation of variables on $\Pi E$.

\begin{corollary}
  Let (\ref{E:Omega}) be a closed nondegenerate formal (1,1)-form on $\Pi E$ and $\ast$ be the corresponding nondegenerate star product with separation of variables on $\Pi E$. Then there exists a globally defined canonical supertrace functional $\sigma$, given by a global Berezin supertrace density $\mu$,
\[
         \sigma(f) = \int f \, \mu,
\]
where $f$ is a formal function with compact support on $\Pi E$.
\end{corollary}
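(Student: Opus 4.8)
The plan is to construct the supertrace density locally via Theorem~\ref{T:trdens} and then to check that the local pieces are canonical and patch to a global object.

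First I would pass to the ``body'' of $\Omega$. Setting the odd fibre coordinates $\theta,\bar\theta$ and their differentials to zero is a trivialization-independent operation, because the transition functions of $\Pi E$ act linearly on the fibre coordinates, and it produces a closed formal $(1,1)$-form $\omega=\nu^{-1}\omega_{-1}+\omega_0+\ldots$ globally defined on $M$; by the discussion of (\ref{E:abcd}), nondegeneracy of $\Omega_{-1}$ forces $\omega_{-1}$ to be a pseudo-K\"ahler form. Let $\star$ be the nondegenerate star product with separation of variables on $M$ with classifying form $\omega$; it is globally defined and carries a globally defined canonical trace density $\mu_\star$ and a globally defined formal Berezin transform $\I_\star$. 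Next I would cover $M$ by contractible coordinate charts $U$ over which $E$ is holomorphically trivializable; on each, fix a holomorphic trivialization $\Pi E|_U\cong U\times\C^{0|d}$ and a formal potential $X$ of $\Omega$ satisfying (\ref{E:l}) and (\ref{E:r}) as furnished by the preceding theorem, and write $X=\Phi+Y$ with $\Phi=X|_{\theta=\bar\theta=0}$ a potential of $\omega|_U$ and $Y$ its nilpotent part, which is a nondegenerate nilpotent potential. By the construction of Section~\ref{S:superstar}, Proposition~\ref{P:bullast} and the uniqueness clause of the preceding theorem, the restriction of $\ast$ to $\Pi E|_U$ is the star product associated with the pair $(\star|_U,e^{Y})$, so Theorem~\ref{T:trdens} supplies on $\Pi E|_U$ a unique canonical Berezin supertrace density $\mu_U=\rho_U\,dz d\bar z d\theta d\bar\theta$, with $\rho_U$ given through (\ref{E:murho}) and (\ref{E:taudens}) in terms of $\mu_\star$ and the $\star$-inverse of $(u_{PQ})$, $u=e^{Y}$, together with a supertrace functional $\sigma_U(f)=\int f\,\mu_U$ on compactly supported formal functions.

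The heart of the argument is to show that $\sigma_U$, and hence $\mu_U$ by the uniqueness in Theorem~\ref{T:trdens}, depends only on $\Omega$, not on the trivialization or on the choice of $X$. Independence of the trivialization is Lemma~\ref{L:sigind}. For the potential: $\Omega|_U$ determines $X$ only up to a summand $a+b$ with $a=a(z,\theta)$ holomorphic and $b=b(\bar z,\bar\theta)$ antiholomorphic, so the body $\Phi$ changes only by $a|_{\theta=0}+b|_{\bar\theta=0}$, which affects neither $\star$ nor $\mu_\star$ nor $\I_\star$, while $Y$ acquires the even nilpotent summand $a'+b'$ with $a'$ holomorphic and $b'$ antiholomorphic, so that $u=e^{Y}$ is replaced by $e^{a'}e^{b'}u$. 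I would then argue as in the proof of Lemma~\ref{L:eab}: the replacement $u\mapsto e^{b'}u$ leaves the homomorphism (\ref{E:matralg}), hence the identification of $\M_\star(U)$ with formal functions, hence $\sigma_U$, literally unchanged; the replacement $u\mapsto e^{a'}u$ conjugates (\ref{E:matralg}) by the operator of multiplication by $e^{a'}$, which, since a holomorphic function is a left $\star$-multiplier, corresponds to conjugation of $\M_\star(U)$ by an even invertible element, and the supertrace $\mathrm{Str}$ on $\M_\star(U)$ is invariant under such conjugations. This shows that $\sigma_U$ and $\mu_U$ are canonical.

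Finally I would patch. Since the $\sigma_U$ agree on overlaps, for a compactly supported formal function $f$ on $\Pi E$ one sets $\sigma(f):=\sum_i\sigma_{U_i}(f_i)$ using a partition of unity, $f=\sum_i f_i$ with each $f_i$ compactly supported in $U_i$; this is well defined and is a supertrace for $\ast$, since vanishing on $\ast$-supercommutators of compactly supported functions is a local condition satisfied by each $\sigma_{U_i}$. A Berezin volume form is determined by its integrals against compactly supported test functions, so the identities $\int f\,\mu_U=\sigma(f)=\int f\,\mu_{U'}$ for $f$ supported in $U\cap U'$ give $\mu_U=\mu_{U'}$ there; hence the $\mu_U$ are the restrictions of a single global Berezin supertrace density $\mu$ on $\Pi E$ with $\sigma(f)=\int f\,\mu$. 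I expect the main obstacle to be the invariance of $\sigma_U$ under the change of the local potential $X$ --- the step using Lemma~\ref{L:eab} and the conjugation-invariance of $\mathrm{Str}$ --- whereas extracting the body form $\omega$ and patching are routine.
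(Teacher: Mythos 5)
Your proof is correct and follows essentially the same route as the paper, whose entire proof of this corollary is the citation of Lemma~\ref{L:sigind} and Theorem~\ref{T:trdens}. The only substantive addition is your explicit check that $\sigma_U$ is unchanged when the local potential $X$ is replaced by $X+a+b$ (via the Lemma~\ref{L:eab} argument and the conjugation-invariance of $\mathrm{Str}$), a point the paper leaves implicit.
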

\begin{proof}
The corollary follows from Lemma \ref{L:sigind} and Theorem \ref{T:trdens}.
\end{proof}

In the next proposition we consider a star product $\ast$ with separation of variables {\it of the Wick type} on $U \times \C^{0|d}$. 
For a holomorphic function $a$ and an antiholomorphic function $b$ on $U \times \C^{0|d}$ the operator of left $\ast$-multiplication by $b$ and the operator of graded right $\ast$-multiplication by $a$ are left multiplication operators,
\[
                         L_b = b \mbox{ and } R_a = a.
\]
 
\begin{proposition}\label{P:wick}
Let $U$ be a contractible open subset of $\C^m$ and $\ast$ be a star product with separation of variables of the Wick type on $U \times \C^{0|d}$. Assume that
\[
     v_k = \nu^{-1} v_{k, -1} + v_{k,0} + \nu v_{k,1} + \ldots, 1 \leq k \leq m,
\]
are even and 
\[
    w_\alpha = \nu^{-1} w_{\alpha,-1} + w_{\alpha,0} + \nu w_{\alpha,1} + \ldots, 1 \leq \alpha \leq d,
\]
are odd formal functions on $U \times \C^{0|d}$, respectively,  such that they pairwise $\ast$-supercommute and satisfy the following $\ast$-supercommu\-tation relations for any holomorphic $a$,
\begin{equation}\label{E:vw}
       \left[v_k, a\right]_\ast = \frac{\p a}{\p z^k}  \mbox{ and } \left[w_\alpha, a\right]_\ast = \frac{\p a}{\p \theta^\alpha}.
\end{equation}
Then the star product $\ast$ is nondegenerate and there exists a nondegenerate potential $X = \nu^{-1}X_{-1} + X_0 + \ldots$ on $U \times \C^{0|d}$ such that
\begin{equation}\label{E:potX}
     v_k = -\frac{\p X}{\p z^k} \mbox{ and } w_\alpha = -\frac{\p X}{\p \theta^\alpha}.
\end{equation}
For this potential,
\begin{equation}\label{E:rwick}
     R_{\frac{\p X}{\p z^k}} = \frac{\p X}{\p z^k} + \frac{\p}{\p z^k} \mbox{ and } R_{\frac{\p X}{\p \theta^\alpha}} = \frac{\p X}{\p \theta^\alpha} + \frac{\p}{\p \theta^\alpha}.
\end{equation}
\end{proposition}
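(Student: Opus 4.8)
The plan is to first determine the graded right $\ast$-multiplication operators $R_{v_k}$ and $R_{w_\alpha}$ explicitly, then integrate them to a potential $X$, and finally to extract nondegeneracy from the lowest-order term of the relations (\ref{E:vw}).

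\emph{Determining $R_{v_k}$ and $R_{w_\alpha}$.} Since $\ast$ is of the Wick type, $R_a = a$ for every holomorphic $a$, and hence $g \ast a = (-1)^{|a||g|} a g$ for all $g$; in particular $v_k \ast a = v_k a$ and $w_\alpha \ast a = w_\alpha a$ for holomorphic $a$. Feeding this into $[v_k,a]_\ast = L_{v_k}a - R_{v_k}a$ and $[w_\alpha,a]_\ast = L_{w_\alpha}a - R_{w_\alpha}a$ and using (\ref{E:vw}), I obtain $R_{v_k}a = (v_k - \partial/\partial z^k)a$ and $R_{w_\alpha}a = (w_\alpha - \partial/\partial\theta^\alpha)a$ for all holomorphic $a$, the derivatives acting from the left. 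On the other hand, $R_{v_k}$ and $R_{w_\alpha}$ supercommute with every left $\ast$-multiplication operator, hence with multiplication by $\bar z^l$ and $\bar\theta^\beta$ (because $L_{\bar z^l}=\bar z^l$ and $L_{\bar\theta^\beta}=\bar\theta^\beta$ for the Wick product), so as formal differential operators they involve only the holomorphic derivatives $\partial/\partial z^k$ and $\partial/\partial\theta^\alpha$. Such an operator is pinned down by its action on holomorphic functions (test it on holomorphic monomials, comparing coefficients order by order in $\nu$), whence $R_{v_k} = v_k - \partial/\partial z^k$ and $R_{w_\alpha} = w_\alpha - \partial/\partial\theta^\alpha$.

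\emph{Constructing $X$ and deriving (\ref{E:rwick}).} Because $v_k$ and $w_\alpha$ pairwise $\ast$-supercommute, so do the operators $R_{v_k}$ and $R_{w_\alpha}$; a short computation of their supercommutators, using the formulas just obtained, produces the integrability conditions $\partial v_k/\partial z^j = \partial v_j/\partial z^k$, $\partial v_k/\partial\theta^\alpha = \partial w_\alpha/\partial z^k$, and $\partial w_\alpha/\partial\theta^\beta + \partial w_\beta/\partial\theta^\alpha = 0$. These are exactly the conditions under which the system $\partial X/\partial z^k = -v_k$, $\partial X/\partial\theta^\alpha = -w_\alpha$ is solvable, and since $U$ is contractible and everything is polynomial in the odd variables, the $\partial$-Poincar\'e lemma (in the form used earlier for differential forms on $\Pi E$) yields an even formal function $X = \nu^{-1}X_{-1} + X_0 + \ldots$ on $U \times \C^{0|d}$ solving it; this is (\ref{E:potX}). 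Then (\ref{E:rwick}) is immediate, since $R$ is $\nu$-linear, so $R_{\partial X/\partial z^k} = -R_{v_k} = \partial X/\partial z^k + \partial/\partial z^k$ and likewise $R_{\partial X/\partial\theta^\alpha} = -R_{w_\alpha} = \partial X/\partial\theta^\alpha + \partial/\partial\theta^\alpha$.

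\emph{Nondegeneracy.} I would expand (\ref{E:vw}) in powers of $\nu$. As $a$ is holomorphic while $v_k, w_\alpha$ begin in degree $\nu^{-1}$, the $\nu^{-1}$-coefficient of each relation vanishes identically, while the $\nu^0$-coefficient, written out via the Wick-type analogue of (\ref{E:cone}) for $C_1$ together with the identifications $\partial X_{-1}/\partial z^k = -v_{k,-1}$, $\partial X_{-1}/\partial\theta^\alpha = -w_{\alpha,-1}$, asserts that the even Poisson tensor of $\ast$ (the matrix analogous to (\ref{E:superg})) contracted with the Hessian matrix of $X_{-1}$ analogous to (\ref{E:hess}) equals $\pm\mathrm{Id}$ in block form. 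Hence both matrices are nondegenerate at every point of $U$, which is to say $\ast$ is a nondegenerate star product and $X$ is a nondegenerate formal potential. (Alternatively, once (\ref{E:rwick}) is in hand one may invoke the Wick-type analogue of Proposition~\ref{P:nondeg} directly.) The step I expect to be most delicate is the first one: keeping the Koszul signs straight in the graded commutators with the odd $w_\alpha$ and the odd holomorphic arguments, and justifying carefully that the right-multiplication operators of a Wick-type star product carry no antiholomorphic derivatives and are therefore determined by their restriction to holomorphic functions.
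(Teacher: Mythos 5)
Your proof is correct and follows essentially the same route as the paper: identify $R_{v_k}=v_k-\p/\p z^k$ and $R_{w_\alpha}=w_\alpha-\p/\p\theta^\alpha$ from (\ref{E:vw}) and the fact that right multiplication operators of a Wick-type product carry only holomorphic derivatives, extract the integrability conditions from the pairwise supercommutativity, apply the Poincar\'e lemma on $U\times\C^{0|d}$ to get $X$, and conclude nondegeneracy via (the Wick-type analogue of) Proposition~\ref{P:nondeg}. The extra details you supply (supercommutation with $\bar z^l,\bar\theta^\beta$, determination of $R_f$ by its action on holomorphic functions) are exactly the points the paper leaves implicit.
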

\begin{proof}
We get from relations (\ref{E:vw})  that
\[
     L_{v_k} a - R_{v_k} a = \frac{\p a}{\p z^k}  \mbox{ and } L_{w_\alpha} a - R_{w_\alpha} a = \frac{\p a}{\p \theta^\alpha}.
\]
Since $R_f$ contains only holomorphic partial derivatives $\p /\p z^k$ and $\p / \p \theta^\alpha$, we have
\begin{equation}\label{E:rvw}
                    R_{v_k} = v_k - \frac{\p}{\p z^k} \mbox{ and } R_{w_\alpha} = w_\alpha - \frac{\p}{\p \theta^\alpha}.
\end{equation}

Since the functions $v_k, 1 \leq k \leq m,$ and $w_\alpha, 1 \leq \alpha \leq d,$ pairwise $\ast$-supercommute, they satisfy the equations
\[
     \frac{\p v_k}{\p z^p} = \frac{\p v_p}{\p z^k}, \frac{\p v_k}{\p \theta^\alpha} = \frac{\p w_\alpha}{\p z^k}, \mbox{ and }\frac{\p w_\alpha}{\p \theta^\gamma} = - \frac{\p w_\gamma}{\p \theta^\alpha},
\]
which are equivalent to the condition that the form
\[
                   \Lambda = v_k dz^k + w_\alpha d\theta^\alpha
\]
 on $U \times \C^{0|d}$ is $\p$-closed. By the $\bar \p$-Poincar\' e lemma on $U \times \C^{0|d}$, there is a formal function $X = \nu^{-1}X_{-1} + X_0 + \ldots$ such that $\Lambda = - \p X$. Thus, $X$ satisfies (\ref{E:potX}). It follows from (\ref{E:rvw}) that equations (\ref{E:rwick}) hold. By Proposition \ref{P:nondeg}, both the potential $X$ and the star product $\ast$ are nondegenerate.
\end{proof}

\section{Formal Berezin transform}

Given a possibly degenerate star product with separation of variables of the anti-Wick type $\ast$ on a split supermanifold $\Pi E$, there exists a unique formal differential operator $\I = 1 + \nu\I_1 + \ldots$ on $\Pi E$ such that
\[
              \I (ba) = b \ast a
\]
for any local holomorphic function $a$ and antiholomorphic function $b$. Clearly, $\I a = a$ and $\I b = b$.  

We introduce a star product ~$\ast'$ on $\Pi E$ by the formula
\begin{equation}\label{E:astprime}
     f \ast' g := \I^{-1} \left(\I f \ast \I g\right). 
\end{equation}
\begin{lemma}
  The star product $\ast'$ is of the Wick type.
\end{lemma}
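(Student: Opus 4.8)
The plan is to verify the two defining conditions of a star product with separation of variables of the Wick type for $\ast'$, namely that $L^{\ast'}_b = b$ and $R^{\ast'}_a = a$ (as left-multiplication operators) for local antiholomorphic $b$ and holomorphic $a$. Here the roles of holomorphic and antiholomorphic coordinates are swapped relative to the anti-Wick case of $\ast$, so I must show: $b \ast' f = bf$ for antiholomorphic $b$, and $f \ast' a = (-1)^{|f||a|} a f = af$ for homogeneous holomorphic $a$ and $f$.

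First I would recall from the setup that $\I a = a$ for $a$ holomorphic and $\I b = b$ for $b$ antiholomorphic, since $\I(a) = \I(1 \cdot a) = 1 \ast a = a$ and similarly for $b$; hence also $\I^{-1} a = a$ and $\I^{-1} b = b$. Next, from $\I(ba) = b \ast a$ with $b$ antiholomorphic and $a$ an arbitrary local function viewed via a product — more precisely, I want the stronger fact that $L^\ast_b$, the operator of left $\ast$-multiplication by an antiholomorphic $b$, equals the pointwise left-multiplication operator $b$, and that $R^\ast_a$ equals pointwise left-multiplication by $a$ for holomorphic $a$; these are exactly the defining separation-of-variables properties of the anti-Wick product $\ast$ (and in the super setting this is Lemma~\ref{L:separ}, applied to the pair $(\star,u)$ underlying $\ast$). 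Combining: for antiholomorphic $b$,
\[
b \ast' f = \I^{-1}(\I b \ast \I f) = \I^{-1}(b \ast \I f) = \I^{-1}(b \cdot \I f),
\]
and because $b$ is antiholomorphic the operator of pointwise multiplication by $b$ supercommutes with $\I$ (this uses that $\I$ is built from the $C_r$, which differentiate their arguments only in holomorphic directions — equivalently, $\I$ is an operator containing only holomorphic derivatives, since $\I(ba)=b\ast a$ puts $b$ in the antiholomorphic slot; a short argument is needed here), so $\I^{-1}(b \cdot \I f) = b f$, giving $L^{\ast'}_b = b$.

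The symmetric computation handles holomorphic $a$: $R^{\ast'}_a f = (-1)^{|f||a|} f \ast' a = (-1)^{|f||a|}\I^{-1}(\I f \ast \I a) = (-1)^{|f||a|}\I^{-1}(\I f \ast a) = \I^{-1}\big((-1)^{|f||a|}R^\ast_a \I f\big) = \I^{-1}(a \cdot \I f) = a f$, where the last step again uses that pointwise multiplication by a holomorphic $a$ supercommutes with $\I$. The main obstacle, and the only genuinely substantive point, is establishing that $\I$ (hence $\I^{-1}$) supercommutes with pointwise multiplication by holomorphic and by antiholomorphic functions — equivalently, that $\I$ is a differential operator whose terms involve only holomorphic derivatives, or dually only antiholomorphic ones, as appropriate. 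This should follow from the separation-of-variables structure of $\ast$: since $C_r(f,g)$ differentiates $f$ only in antiholomorphic and $g$ only in holomorphic directions, one reads off from $\I(ba) = b \ast a = \sum \nu^r C_r(b,a)$ that $\I$ acts by differentiating in one type of direction only, so it commutes with multiplication operators by functions of the complementary type. Once this commutation is in hand, the lemma follows by the two displayed computations, and since $\ast'$ is manifestly a star product (being obtained from $\ast$ by conjugation with the invertible formal differential operator $\I$), it is a star product with separation of variables of the Wick type.
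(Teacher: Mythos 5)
There is a genuine error at the heart of your argument. You assert that the defining separation-of-variables properties of the anti-Wick product $\ast$ are $L^\ast_b = b$ for antiholomorphic $b$ and $R^\ast_a = a$ for holomorphic $a$. This is exactly backwards: for a product of the anti-Wick type (and this is what Lemma~\ref{L:separ} actually establishes) one has $L^\ast_a = a$ for \emph{holomorphic} $a$ and $R^\ast_b = b$ for \emph{antiholomorphic} $b$. The left $\ast$-multiplication operator by an antiholomorphic $b$ is a genuinely nontrivial differential operator; indeed, if $L^\ast_b$ were pointwise multiplication by $b$, then $\I(ba) = b \ast a = ba$ would force $\I = 1$ and $\ast' = \ast$, trivializing the lemma. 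Consequently the step $b \ast \I f = b\cdot \I f$ in your first display is unjustified and false in general, as is the corresponding step $(-1)^{|f||a|}\,\I f \ast a = a\cdot \I f$ in the second. Your auxiliary claim that $\I$ supercommutes with pointwise multiplication by antiholomorphic (respectively holomorphic) functions is also false: already in the purely even case $\I = 1 + \nu\, g^{lk}\frac{\p}{\p \bar z^l}\frac{\p}{\p z^k} + \ldots$ contains derivatives of both types. The correct statement is the intertwining identity $L^\ast_b \circ \I = \I \circ b$ (and $R^\ast_a \circ \I = \I \circ a$), not a commutation; your two false assertions happen to compose to this true one, so your final formulas are right for the wrong reasons.

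To repair the proof you must establish the intertwining relation directly, and the clean way --- essentially what the paper does --- is to evaluate the desired identity on functions of the form $f = \tilde b \tilde a$ with $\tilde a$ holomorphic and $\tilde b$ antiholomorphic. For instance, $(\tilde b\tilde a) \ast' a = \I^{-1}\left(\I(\tilde b \tilde a) \ast \I a\right) = \I^{-1}(\tilde b \ast \tilde a \ast a) = \I^{-1}\left(\tilde b \ast (\tilde a a)\right) = \tilde b \tilde a a$, using only the correct anti-Wick identities $\tilde a \ast a = \tilde a a$ and $\I(\tilde b \tilde a a) = \tilde b \ast (\tilde a a)$. Since the operator $f \mapsto f \ast' a$ and pointwise multiplication by $a$ are both formal differential operators agreeing on all such products, they agree on all $f$; the argument for $b \ast' g = bg$ is symmetric. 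With that substitution your overall plan (conjugation by the invertible formal differential operator $\I$, plus the two one-sided identities) does yield the lemma.
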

\begin{proof}
If functions $a, \tilde a$ are local holomorphic and  $b$ is local antiholomorphic on $\Pi E$, then for $f = ba$ we have
\begin{eqnarray*}
     f \ast' \tilde a = (ba) \ast' \tilde a = \I^{-1} (\I (ba) \ast \I \tilde a) =\\
 \I^{-1} (b \ast a \ast \tilde a) = \I^{-1} (b \ast (a\tilde a)) = ba \tilde a = f \tilde a.
\end{eqnarray*}
Therefore, $ f \ast' a =  f a$ for any function $f$ and any holomorphic $a$. Similarly, $b \ast' g = b g$ for any function $g$ and any antiholomorphic $b$. Therefore, $\ast'$ is a star product with separation of variables of the Wick type.
\end{proof}
\begin{proposition}\label{P:bert}
Given a holomorphic vector bundle $E$ over a complex manifold $M$, let (\ref{E:Omega}) be a closed nondegenerate formal (1,1)-form on $\Pi E$, $\ast$ be the corresponding nondegenerate star product with separation of variables on $\Pi E$, $\I$ be its formal Berezin transform, and $\mu$ be its canonical Berezin supertrace density. Then for formal functions $f,g$ on $\Pi E$ the following identities hold,
\begin{equation}\label{E:bertr}
       \int f \ast g \, \mu = \int f\,  \I^{-1}g \, \mu = \int (\I^{-1} f)\,  g \, \mu,
\end{equation}
where $f$ or $g$ has compact support.
\end{proposition}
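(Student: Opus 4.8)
The plan is to reduce the two identities to the local matrix model of Section~\ref{S:split}, where they become the identity of \cite{LMP2} applied entrywise. It suffices to prove the first identity $\int f\ast g\,\mu=\int f\,\I^{-1}g\,\mu$. Indeed, applying it to the pair $(g,f)$ and using that $\sigma$ is a supertrace, that the pointwise product is supercommutative, and that $|\I^{-1}f|=|f|$, one gets $\int f\ast g\,\mu=(-1)^{|f||g|}\int g\ast f\,\mu=(-1)^{|f||g|}\int g\,\I^{-1}f\,\mu=\int(\I^{-1}f)\,g\,\mu$, which is the second identity. Next, since $\mu$ is globally defined on $\Pi E$ and the operators $L_f$ and $\I^{-1}$ are (left) differential operators, none of $f\ast g$, $f\,\I^{-1}g$, $(\I^{-1}f)g$ has support larger than $\supp f\cap\supp g$; hence, by a partition of unity, we may assume that one of $f,g$ is compactly supported in a coordinate chart $U$ over which $E$ is holomorphically trivial, fix a holomorphic trivialization $\Pi E|_U\cong U\times\C^{0|d}$, and recall (Section~\ref{S:superstar}) that $\ast$ is then the product associated with a pair $(\star,u)$, $u=e^Y$, for a nondegenerate nilpotent potential $Y$ and a nondegenerate star product with separation of variables $\star$ on $U$, while $\I$ is the unique formal differential operator with $\I 1=1$ and $\I(ba)=b\ast a$ for local holomorphic $a=a_I\theta^I$ and antiholomorphic $b=b_J\bar\theta^J$.

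On $U\times\C^{0|d}$ I would reuse the bookkeeping of the proof of Theorem~\ref{T:trdens}. Writing a compactly supported $f$ as $f=u^{-1}(f_K^I\star u_{IL})\theta^K\bar\theta^L$ with matrix $(f_K^I)\in\M_\star(U)$, one has $\sigma(f)=\mathrm{Str}(f_K^I)$, the matrix of $f\ast g$ is $(f\ast g)_K^J=f_K^I\star g_I^J$, and $u^{-1}\rho\,dz\,d\bar z=\tau\,\mu_\star$ with $\tau$ given by (\ref{E:taudens}). Hence $\int f\ast g\,\mu=\mathrm{Str}(f_K^I\star g_I^J)=\sum_{K,I}(-1)^{|K|}\int_U f_K^I\star g_I^K\,\mu_\star$, and the identity $\int_U h\,h'\,\mu_\star=\int_U h\star\I_\star h'\,\mu_\star$ of \cite{LMP2}, i.e.\ $\int_U h\star h''\,\mu_\star=\int_U h\,\I_\star^{-1}(h'')\,\mu_\star$, applied entrywise gives $\int f\ast g\,\mu=\sum_{K,I}(-1)^{|K|}\int_U f_K^I\,\I_\star^{-1}(g_I^K)\,\mu_\star$. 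For the right-hand side it is convenient to put $g=\I g'$, so that the claim becomes $\int f\ast\I g'\,\mu=\int f\,g'\,\mu$; then $\int f\ast\I g'\,\mu=\mathrm{Str}\bigl(f_K^I\star(\I g')_I^J\bigr)$, and one is left to identify the matrix $(\I g')_I^J$ of the function $\I g'$. This I would do by expanding $\I(ba)=b\ast a$ through the explicit product formula (\ref{E:prod}), expressing $\I$ on products of holomorphic and antiholomorphic functions in terms of $\I_\star$, $u$ and $v$; since $\I$ is differential with $\I 1=1$, this determines it. Collecting the Grassmann sign factors $\varepsilon(I)$, $|K'|$, $\lambda(K,L)$ exactly as in the proof of Theorem~\ref{T:trdens}, using $u_{IL}\star v^{LK}=\delta_I^K$ and the entrywise classical identity once more, both sides should collapse to the same sum $\sum_{K,I}(-1)^{|K|}\int_U f_K^I\,\I_\star^{-1}(g_I^K)\,\mu_\star$, which proves the proposition.

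The main obstacle is the computation of $\I$ in the chart. Because the pointwise product on $\Pi E|_U$ carries the Grassmann combinatorics that $\ast$ trivializes into matrix multiplication over $(C^\infty(U)[\nu^{-1},\nu]],\star)$, the transform $\I$ does not act entrywise on the matrices $(f_K^I)$, and reconciling the two descriptions — together with the attendant sign bookkeeping — is where the real work lies. As a partial check, the first identity already holds when $g$, or $f$, is holomorphic or antiholomorphic: for antiholomorphic $g=b$ one has $f\ast b=fb$ and $\I^{-1}b=b$; for holomorphic $g=a$ one has $\I^{-1}a=a$ and, by the supertrace property, $\int f\ast a\,\mu=(-1)^{|f||a|}\int a\ast f\,\mu=(-1)^{|f||a|}\int af\,\mu=\int fa\,\mu$; and symmetrically in $f$. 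These special cases do not span the compactly supported functions, so the full statement genuinely requires the local computation above.
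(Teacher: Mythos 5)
Your reductions are sound: deriving the second identity from the first via the supertrace property, and localizing by a partition of unity using that $\ast$ and $\I^{-1}$ are differential so that $f\ast g$, $f\,\I^{-1}g$, $(\I^{-1}f)g$ are all supported in $\supp f\cap\supp g$ --- this matches the paper. But the core of your argument, the local identity itself, is left as a plan rather than a proof. You compute the left side $\int f\ast g\,\mu=\Str(f_K^I\star g_I^J)$ and reduce it entrywise via the classical identity, but the right side requires (i) an explicit chart formula for $\I$ (or $\I^{-1}$) in terms of $\I_\star$, $u$, $v$, and (ii) an expression for $\int fh\,\mu$ where $fh$ is the \emph{pointwise} product, which reintroduces all the Grassmann combinatorics of $\tau$ in (\ref{E:taudens}). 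You explicitly defer both, writing that ``both sides should collapse to the same sum'' and that this reconciliation ``is where the real work lies.'' That is precisely the content of the proposition, so as it stands the argument has a genuine gap.

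The irony is that your closing ``partial check'' is one small step away from the paper's actual proof, and you dismiss it too quickly. Rather than testing $g$ holomorphic or antiholomorphic separately, test $g=\I(ba)=b\ast a$ for $a$ holomorphic and $b$ antiholomorphic: then $\I^{-1}g=ba$ by definition of $\I$, and the cyclicity of $\sigma$ together with separation of variables gives
\[
\int f\ast b\ast a\,\mu=(-1)^{|a|(|f|+|b|)}\int a\ast f\ast b\,\mu=(-1)^{|a|(|f|+|b|)}\int afb\,\mu=\int fba\,\mu=\int f\,\I^{-1}g\,\mu .
\]
The point you miss is that this family of $g$ \emph{does} suffice: for fixed compactly supported $f$, both sides of the first identity are of the form $\int (Dg)\,\mu$ for formal differential operators $D$ (namely $L_f$ and $f\cdot\I^{-1}$), and a differential operator on a chart is determined by its action on products of holomorphic and antiholomorphic functions, since their jets span all jets. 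This closes the local case with no computation of $\I$ in coordinates and no matrix bookkeeping, after which your partition-of-unity step finishes the proof exactly as you describe.
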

\begin{proof}
  Suppose that there is a contractible coordinate chart $U \subset M$ such that $E$ is holomorphically trivializable over $U$, $f$ has compact support in $\Pi E|_U$ and $g = \I (ba) = b \ast a$, where $a$ is holomorphic and $b$ is antiholomorphic on $\Pi E|_U$. Assume also that $a, b$, and  $f$ are homogeneous. Then
\begin{eqnarray*}
     \int f \ast g \, \mu = \int f \ast b \ast a \, \mu = (-1)^{|a|(|f|+|b|)}\int a \ast f \ast b \, \mu =\\
  (-1)^{|a|(|f|+|b|)}\int afb \, \mu  = \int fba \, \mu =  \int f\,  \I^{-1}g \, \mu.
\end{eqnarray*}
Since $\I$ is a formal differential operator and $\ast$ is a differential product, we obtain the first equality in (\ref{E:bertr}) for $f$ with compact support in $\Pi E|_U$ and any $g$. Using a partition of unity on $M$ we can prove the first equality in (\ref{E:bertr}) for any $f$ with compact support and any $g$. If $g$ has compact support, we can assume that $f$ also has compact support, which proves the first equality. The second equality follows from the first one and the fact that $\mu$ is a supertrace density for $\ast$:
\[
    \int f \ast g \, \mu = (-1)^{|f||g|} \int g \ast f \, \mu = (-1)^{|f||g|} \int g \,  \I^{-1}f \, \mu = \int (\I^{-1} f)\,  g \, \mu.
\]
\end{proof}
Let $U \subset \C^m$ be a contractible open subset and $\ast$ be a nondegenerate star product with separation of variables  of the anti-Wick type on $U \times \C^{0|d}$ such that there exists a nondegenerate formal potential $X$ satisfying (\ref{E:l}).  It follows from (\ref{E:l}) that the elements $\p X / \p z^k, 1 \leq k \leq m,$ and $\p X / \p \theta^\alpha, 1 \leq \alpha \leq d,$ pairwise $\ast$-supercommute and that for a holomorphic function $a$ on $U \times \C^{0|d}$,
\begin{equation}\label{E:scr}
       \left[\frac{\p X}{\p z^k}, a\right]_\ast = \frac{\p a}{\p z^k} \mbox{ and } \left[\frac{\p X}{\p \theta^\alpha}, a\right]_\ast = \frac{\p a}{\p \theta^\alpha}.
\end{equation}
Denote by $\I$ the formal Berezin transform for the product $\ast$ and by ~$\ast'$ the corresponding star product of the Wick type given by (\ref{E:astprime}). Applying $\I$ to the $\ast$-supercommutation relations (\ref{E:scr}) we obtain that the elements
\[
                \I^{-1} \left( \frac{\p X}{\p z^k} \right), 1 \leq k \leq m, \mbox{ and }  \I^{-1} \left( \frac{\p X}{\p \theta^\alpha} \right), 1 \leq \alpha \leq d,
\]
pairwise $\ast'$-supercommute and satisfy the relations
\[
     \left[\I^{-1}\left(\frac{\p X}{\p z^k}\right), a\right]_{\ast'} = \frac{\p a}{\p z^k} \mbox{ and } \left[\I^{-1}\left(\frac{\p X}{\p \theta^\alpha}\right), a\right]_{\ast'} = \frac{\p a}{\p \theta^\alpha}.
\]
According to Proposition \ref{P:wick}, there exists a nondegenerate formal potential $X'$ on $U \times \C^{0|d}$ satisfying
\begin{equation}\label{E:xprime}
             \frac{\p X'}{\p z^k} = - \I^{-1}\left(\frac{\p X}{\p z^k}\right)  \mbox{ and } \frac{\p X'}{\p \theta^\alpha} = - \I^{-1}\left(\frac{\p X}{\p \theta^\alpha}\right)
\end{equation}
for which
\[
            R'_{\frac{\p X'}{\p z^k}} = \frac{\p X'}{\p z^k} + \frac{\p}{\p z^k} \mbox{ and } R'_{\frac{\p X'}{\p \theta^\alpha}} = \frac{\p X'}{\p \theta^\alpha} + \frac{\p}{\p \theta^\alpha},
\]
where $R'_f$ is the graded right $\ast'$-multiplication operator by $f$. It means that the star product of the Wick type $\ast'$ can be constructed from the potential $X'$ as in Section \ref{S:superstar} (with appropriate changes) and is nondegenerate. In particular,
\[
            L'_{\frac{\p X'}{\p \bar z^l}} = \frac{\p X'}{\p \bar z^l} + \frac{\p}{\p \bar z^l} \mbox{ and } L'_{\frac{\p X'}{\p \bar\theta^\beta}} = \frac{\p X'}{\p \bar\theta^\beta} + \frac{\p}{\p \bar \theta^\beta},
\]
where $L'_f$ is the left $\ast'$-multiplication operator by $f$.

\begin{theorem}
Let $U \subset \C^m$ be a contractible open subset and $\ast$ be a nondegenerate star product with separation of variables of the anti-Wick type on $U \times \C^{0|d}$ determined by a nondegenerate formal potential $X = \nu^{-1}X_{-1} + X_0 +  \ldots$. Then the following statements hold. (a) There exists a nondegenerate formal potential $X' = \nu^{-1}X'_{-1} + X'_0 +  \ldots$ which determines the star product with separation of variables of the Wick type (\ref{E:astprime}) and satisfies the equations
\begin{eqnarray}\label{E:allfour}
    \frac{\p X'}{\p z^k} = - \I^{-1}\left(\frac{\p X}{\p z^k}\right), \frac{\p X'}{\p \theta^\alpha} = - \I^{-1}\left(\frac{\p X}{\p \theta^\alpha}\right),\\
  \frac{\p X'}{\p \bar z^l} = - \I^{-1}\left(\frac{\p X}{\p \bar z^l}\right),
 \mbox{ and } \frac{\p X'}{\p \bar\theta^\beta} = - \I^{-1}\left(\frac{\p X}{\p \bar\theta^\beta}\right). \nonumber
\end{eqnarray}
(b) The Berezin density
\begin{equation}\label{E:std}
             e^{X + X'}dz d\bar z d\theta d\bar \theta
\end{equation}
is a supertrace density for the product $\ast$ on $U \times \C^{0|d}$.
\end{theorem}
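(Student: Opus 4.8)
\emph{Plan.} Part (a) is largely contained in the discussion preceding the theorem: applying $\I$ to the $\ast$-supercommutation relations (\ref{E:scr}) of the components $\p X/\p z^k,\ \p X/\p\theta^\alpha$ and invoking Proposition~\ref{P:wick} yields a nondegenerate formal potential $X'$ with $\p X'/\p z^k=-\I^{-1}(\p X/\p z^k)$ and $\p X'/\p\theta^\alpha=-\I^{-1}(\p X/\p\theta^\alpha)$ which determines the Wick-type star product $\ast'$ of (\ref{E:astprime}). To obtain the remaining two equations of (\ref{E:allfour}) I would run the mirror argument on the ``right'' side: since $\ast$ is equally characterized by (\ref{E:r}), the components $\p X/\p\bar z^l,\ \p X/\p\bar\theta^\beta$ pairwise $\ast$-supercommute, and a short computation using $f\ast b=fb$ for antiholomorphic $b$ gives $[\p X/\p\bar z^l,b]_\ast=-\p b/\p\bar z^l$ and $[\p X/\p\bar\theta^\beta,b]_\ast=-\p b/\p\bar\theta^\beta$; applying $\I$ and the antiholomorphic (left-multiplication) counterpart of Proposition~\ref{P:wick} then produces a nondegenerate formal potential $X''$ with $\p X''/\p\bar z^l=-\I^{-1}(\p X/\p\bar z^l)$ and $\p X''/\p\bar\theta^\beta=-\I^{-1}(\p X/\p\bar\theta^\beta)$, again determining $\ast'$. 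Two nondegenerate formal potentials of the same star product have the same $i\p\bar\p$ and hence differ by a summand $a(z,\theta)+b(\bar z,\bar\theta)$; replacing $X'$ by $X'+b$, which does not affect its holomorphic derivatives, gives a single nondegenerate formal potential satisfying all four equations of (\ref{E:allfour}), and by Lemma~\ref{L:eab} it still determines $\ast'$.

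For part (b), set $\mu':=e^{X+X'}\,dz\,d\bar z\,d\theta\,d\bar\theta$. The functional $f\mapsto\int f\,\mu'$ is a supertrace for $\ast$ if and only if $\int f\ast g\,\mu'=(-1)^{|f||g|}\int g\ast f\,\mu'$, and by a standard argument (if $[f,\,\cdot\,]_\ast$ and $[f',\,\cdot\,]_\ast$ integrate to zero against $\mu'$ then so does $[f\ast f',\,\cdot\,]_\ast$) it suffices to verify this when $f$ ranges over a $\ast$-generating set and $g$ over compactly supported functions. I would take as generators the holomorphic functions together with the components $\p X/\p z^k$ and $\p X/\p\theta^\alpha$ (these $\ast$-generate, since their leading parts together with the nondegenerate Hessian (\ref{E:hess}) recover all coordinates), for which $L_f$ is given explicitly by (\ref{E:l}). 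For $f=\p X/\p z^k$ one has $\int f\ast g\,\mu'=\int\bigl((\p X/\p z^k)g+\p g/\p z^k\bigr)\mu'$; integrating the second term by parts against $e^{X+X'}$ and using part (a) turns this into $-\int g\,(\p X'/\p z^k)\,\mu'=\int g\,\I^{-1}(\p X/\p z^k)\,\mu'$, while expressing $R_{\p X/\p z^k}$ through $\ast'$ (via $R_h=\I\,R'_{\I^{-1}h}\,\I^{-1}$ and $R'_{\p X'/\p z^k}=\p X'/\p z^k+\p/\p z^k$) and integrating by parts again gives $(-1)^{|f||g|}\int g\ast f\,\mu'=\int(\I^{-1}g)\,(\p X/\p z^k)\,\mu'$. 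These two expressions coincide exactly when the formal Berezin transform $\I$ is self-adjoint for the pairing $(f,g)\mapsto\int fg\,\mu'$; the same reduction, with the obvious sign adjustments, disposes of $f=\p X/\p\theta^\alpha$ and of holomorphic $f$. Thus part (b) is equivalent to the self-adjointness of $\I$ with respect to $\mu'$.

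To establish that self-adjointness I would identify $\mu'$ with a nonzero constant multiple of the canonical supertrace density $\mu=\rho\,dz\,d\bar z\,d\theta\,d\bar\theta$ of Theorem~\ref{T:trdens}. Combining (\ref{E:taudens}) with $u=e^Y$ and the formula (\ref{E:trdens}) for the canonical trace density $\mu_\star$ of the base star product $\star$ (with $\varkappa=\Phi+\Psi-\log\mathbf{g}$ as in (\ref{E:trdens}), $\Phi=X-Y$, and $\Psi$ the dual potential of $\star$), one computes $\rho=\nu^{-m}\,\tau\,e^{X+\Psi}$, where $\tau$ is the Grassmann-valued function of (\ref{E:taudens}). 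Since the nilpotent component $Y'$ of $X'$ plays for $\ast'$ the role $Y$ plays for $\ast$, one has $X'-\Psi=Y'$, and by the computation that produced the leading term (\ref{E:lead}) the function $\tau$ equals a nonzero constant times $\nu^{m}e^{Y'}$; hence $\mu'=e^{X+X'}\,dz\,d\bar z\,d\theta\,d\bar\theta$ is proportional to $e^{X+\Psi}e^{Y'}\,dz\,d\bar z\,d\theta\,d\bar\theta=\rho\,dz\,d\bar z\,d\theta\,d\bar\theta=\mu$. Proposition~\ref{P:bert}, which gives the self-adjointness of $\I$ with respect to $\mu$, then gives it with respect to $\mu'$, completing part (b). The main obstacle I anticipate is precisely this last identification --- equivalently, the self-adjointness of $\I$ against $\mu'$ --- since it requires pinning down $\tau$ in terms of $Y'$ rather than merely its leading term; the integration-by-parts bookkeeping and the generator reduction are routine.
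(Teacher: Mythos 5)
Your overall direction is reversed relative to the paper's, and the reversal is where the trouble lies. The paper does not attempt to verify directly that $e^{X+X'}dz\,d\bar z\,d\theta\,d\bar\theta$ is a supertrace density. Instead it starts from the canonical Berezin supertrace density $\mu=\rho\,dz\,d\bar z\,d\theta\,d\bar\theta$ of Theorem~\ref{T:trdens}, uses Corollary~\ref{C:cantrden} to write $\rho=\nu^{d-m}e^{S}$ for an even formal function $S$, sets $\tilde X:=S-X$, and then shows by a short computation --- Proposition~\ref{P:bert} combined with (\ref{E:l}), (\ref{E:r}) and integration by parts, e.g.\ $\int\bigl(\I^{-1}(\p X/\p z^p)+\p\tilde X/\p z^p\bigr)f\,\mu=\nu^{d-m}\int\p\bigl(e^{S}f\bigr)/\p z^p\,dz\,d\bar z\,d\theta\,d\bar\theta=0$ for all compactly supported $f$ --- that $\tilde X$ satisfies all four equations of (\ref{E:allfour}) at once. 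Part (a) then follows because these equations determine $\tilde X$ up to a formal constant and identify it (after discarding the constant terms in powers $\nu^k$, $k<-1$) with the potential of (\ref{E:xprime}); part (b) is then immediate because $e^{X+X'}$ equals $e^{S}=\nu^{m-d}\rho$ up to a nonzero formal constant factor.

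The concrete gap in your argument is exactly the step you flag yourself: the identification of $e^{X+X'}dz\,d\bar z\,d\theta\,d\bar\theta$ with a constant multiple of $\mu$, equivalently the self-adjointness of $\I$ with respect to your $\mu'$. You assert that the $\tau$ of (\ref{E:taudens}) is a nonzero constant times $\nu^{m}e^{Y'}$ and that $X'-\Psi=Y'$, but neither is established: (\ref{E:taudens}) together with (\ref{E:lead}) only controls leading terms, and pinning down $\tau$ to all orders is precisely the content of part (b), so the reduction is not a proof. There is a secondary gap in part (a): to merge $X'$ with $X''$ you invoke the statement that two nondegenerate potentials of the same star product differ by $a(z,\theta)+b(\bar z,\bar\theta)$, i.e.\ the injectivity of the classification map, which the paper explicitly leaves as an unproven Remark. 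It can be filled (from $L'_g=g$ being a multiplication operator and the nondegeneracy of (\ref{E:superg}) one deduces that $g$ is antiholomorphic, whence $\bar\p(X'-X'')$ is an antiholomorphic form), but as written it appeals to a fact the paper does not prove. The paper's reversed argument disposes of both issues simultaneously, and I would recommend restructuring your proof along those lines.
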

\begin{proof}
Let $\mu = \rho\,  dz d\bar z d \theta d\bar \theta$ be the canonical Berezin supertrace density of $\ast$ on $U \times \C^{0|d}$, where $\rho$ is a formal function on $U \times \C^{0|d}$. It follows from Corollary ~\ref{C:cantrden} that there exists an even formal function $S$ on $U \times \C^{0|d}$ such that
\[
     \rho = \nu^{d-m} e^S.
\]
Set $\tilde X : = S - X$. Then $\tilde X = \nu^r \tilde X_r + \nu^{r+1} \tilde X_{r+1} + \ldots$ for some $r \in \Z$. We have from Proposition \ref{P:bert} and the first formula in (\ref{E:l})  that
\begin{eqnarray*}
\int \left(\I^{-1} \left(\frac{\p X}{\p z^p} \right) + \frac{\p \tilde X}{\p z^p}\right)f \, \mu = \int \left(\frac{\p X}{\p z^p} \ast f + \frac{\p \tilde X}{\p z^p}f\right)\mu =\\
\int \left(\frac{\p f}{\p z^p} + \frac{\p X}{\p z^p}f + \frac{\p \tilde X}{\p z^p}f \right) \mu = \nu^{d-m}\int \frac{\p}{\p z^p} \left(e^S f\right) dz d\bar z d \theta d\bar \theta = 0.
\end{eqnarray*}
Since $f$ is arbitrary, it follows that
\begin{equation}\label{E:first}
       \frac{\p \tilde X}{\p z^k} = - \I^{-1}\left(\frac{\p X}{\p z^k}\right).
\end{equation}
Similarly, we can derive from Proposition \ref{P:bert}, the second formula in (\ref{E:l}), and formulas (\ref{E:r}) that
\begin{eqnarray}\label{E:therest}
    \frac{\p \tilde X}{\p \theta^\alpha} = - \I^{-1}\left(\frac{\p X}{\p \theta^\alpha}\right),  \frac{\p \tilde X}{\p \bar z^l} = - \I^{-1}\left(\frac{\p X}{\p \bar z^l}\right),\\
 \mbox{ and } \frac{\p \tilde X}{\p \bar\theta^\beta} = - \I^{-1}\left(\frac{\p X}{\p \bar\theta^\beta}\right). \nonumber
\end{eqnarray}
Equations (\ref{E:first}) and (\ref{E:therest}) determine the function $\tilde X$ up to a formal constant summand. Moreover, $\tilde X_k$ is a constant for $k < -1$. According to~ (\ref{E:xprime}), 
\[
      X' := \nu^{-1} \tilde X_{-1} + \tilde X_0 + \ldots
\]
is a nondegenerate formal potential which determines the star product ~ (\ref{E:astprime}). Clearly, $X'$ satisfies the conditions of the theorem and (\ref{E:std}) is a Berezin supertrace density for the product $\ast$.
\end{proof}

We want to describe an important class of star products with separation of variables on split supermanifolds. Let $E$ be a holomorphic Hermitian vector bundle of rank $d$ over a pseudo-K\" ahler manifold $M$ and $\star$ be a star product with separation of variables on $M$. The Hermitian metric on $E$ determines a global function $h$ on $\Pi E$ by the following condition. Let $U \subset M$ be a coordinate chart  such that there exists a holomorphic trivialization $E|_U \cong U \times \C^d$ and $h_{\alpha \beta}$ be the Hermitian fiber metric over $U$. Then $h = h_{\alpha \beta}\theta^\alpha \bar \theta^\beta$  on $\Pi E|_U \cong U \times \C^{0|d}$. Notice that $Y = \nu^{-1}h_{\alpha \beta}\theta^\alpha \bar \theta^\beta$ is a nondegenerate nilpotent potential. Denote by $\ast$ the star product with separation of variables on $\Pi E$ associated with the pair $(\star, \exp\{\nu^{-1} h\})$. Writing in local coordinates
\[
     \exp\left\{\nu^{-1} h\right\} = u_{IJ} \theta^I \bar \theta^J,
\]
we see that the matrix $(u_{IJ})$ is block diagonal with the diagonal blocks formed by the entries $u_{IJ}$ with $|I|=|J| = k$ for each $k$ satisfying $0 \leq k \leq d$, because $u_{IJ} = 0$ for $|I| \neq |J|$. We have
\[
      u_{[d][d]} = \frac{1}{\nu^d} (-1)^{\frac{d(d-1)}{2}}  \det ( h_{\alpha\beta}).
\]
The block $v^{[d][d]}$ of the inverse matrix $(v^{JI})$ of $(u_{IJ})$ over the algebra $(C^\infty(U)[\nu^{-1},\nu]],\star)$ satisfies
\[
        u_{[d][d]} \star v^{[d][d]} =  v^{[d][d]} \star u_{[d][d]} =  1.
\]
This observation and formula (\ref{E:taudens}) allow to canonically normalize the supertrace density (\ref{E:std}) of the product $\ast$ in important applications.

\end{document}